\newtheorem{thm}{Theorem}[section]
\begin{document}

\title{Twisted braids}

\author{Shudan Xue \quad Qingying Deng \footnote{Corresponding author. \newline {\em E-mail address:} qingying@xtu.edu.cn (Q.Deng).} \\
{\footnotesize   \em School of Mathematics and Computational Science, Xiangtan University, Xiangtan, Hunan 411105, P. R. China}}


\maketitle

\begin{abstract}
Twisted knot theory, introduced by M.O. Bourgoin, is a generalization of virtual knot theory.
It naturally yields the notion of a twisted braid, which is closely related to the notion of a virtual braid due to Kauffman.
In this paper, we first prove that any twisted link can be described as the closure
of a twisted braid, which is unique up to certain basic moves. This is the analogue of the Alexander Theorem and the Markov Theorem for classical braids and links.
Then we also give reduced presentations for the twisted braid group and the flat twisted braid group. These reduced presentations are based on the fact that these
twisted braid groups on $n$ strands are generated by a single braiding element and a single bar element plus the generators of the symmetric group on $n$ letters.
\end{abstract}

$\mathbf{keywords:}$ twisted link; twisted braids; braiding link diagram; reduced presentation.

\footnote{This article will appear in INVOLVE Issue 4.}

\vskip0.5cm

\section{Introduction}
\setlength{\parindent}{2em}
\indent In 1996, Kauffman \cite{LHK} introduced virtual knot theory, which is a generalization of classical knot theory, and mentioned virtual braid is a subject very close to the ``welded braid" of Fenn, Rimanyi and Rourke in \cite{RRC}. Subsequently, Kauffamn and Lambropoulou did further research on virtual braids in \cite{Kauffman2000, Kauffman2004, Kauffman2005}.

In 1923, Alexander \cite{JWA} stated that any classical link is described as the closure of a certain braid, that is, the Alexander Theorem. Other proofs of classical Alexander Theorem refer to \cite{SL1, SL2, SL3, HRM, P.V, SY}.
Note that this kind of braid is not the only one. In 1936, Markov \cite{AAM} illustrated that such a braid presentation is unique up to conjugations and stabilization, that is, the Markov Theorem. Other proofs of classical Markov Theorem refer to \cite{DBE, JSB1, JSB2, SL1, SL2, SL3, HRM, P.T, NW}.

In \cite{LHK}, Kauffman introduced virtual braid group, which is the extension of classical braid group introduced by Artin in \cite{EA}, and it can be described by braid generators and braid relations.
In 2000, Kamada in preprint of \cite{Kamada2007} proved the Alexander Theorem for virtual links and showed the Markov Theorem for virtual braids combining with virtual braid groups, giving a set of global moves on virtual braids that generate the same equivalence classes as the virtual link types of their closures.
In \cite{Kauffman2004}, Kauffman and Lambropoulou gave a new method for converting virtual links to virtual braids and proved every (oriented) virtual link can be represented by a virtual braid whose closure is isotopic to the original link.
This is the analogue of the Alexander Theorems for classical braids and links.
And meanwhile Kauffman and Lambropoulou \cite{Kauffman2004} gave reduced presentations for the virtual braid group and the flat virtual braid group (as well as for other categories). Soon after, Kauffman and Lambropoulou \cite{Kauffman2005} followed L-move methods of Lambropoulou and Rourke in \cite{SL1} to prove the virtual Markov Theorems. One benefit of this approach is a fully local algebraic formulation of the theorems in each category. This is the analogue of the Markov Theorems for classical braids and links.
These theorems are important for understanding the structure and classification of virtual links.
The study of virtual links is closely related to the study of flat virtual links \cite{Kauffman2004, Kauffman2005}.

In $2008$, M.O. Bourgoin \cite{M.O.B} generalized virtual links to twisted links.
Recently, S. Kamada and N. Kamada discussed when two virtual links are equivalent as twisted links, and gave a necessary and sufficient condition for this to be the case in \cite{Kamada2020}.
In this paper, we shall generalize the Alexander Theorem and the Markov Theorem for virtual links to twisted links and give reduced presentations for the twisted braid group and the flat twisted braid group.

The rest of this paper is organized as follows.
In section $2$, we introduce the twisted knot theory.
In section $3$, we prove the Alexander Theorem of twisted links and flat twisted links, respectively.
In section $4$, we show the Markov Theorem for the twisted braids and flat twisted braids, respectively.
In section $5$, we give a reduced presentation for the twisted braid group and the flat twisted braid group, respectively.
In section $6$, we give the Algebraic Markov Theorem for the twisted braids and the flat twisted braids, respectively.

\section{Twisted Knot Theory}

 \begin{figure}[!htbp]
  \centering
    \subfigure[$ classical \ knot$]{
  \includegraphics[width=0.2\textwidth]{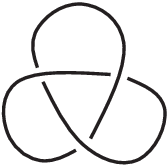}}  \
  \
   \subfigure[$  virtual \ knot$]{
  \includegraphics[width=0.2\textwidth]{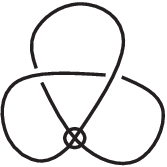}}  \
  \
  \subfigure[$  twisted \ knot$]{
  \includegraphics[width=0.2\textwidth]{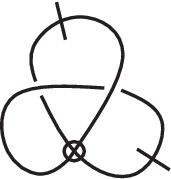}}  \
  \
   \subfigure[$  flat \ twisted \ knot$]{
  \includegraphics[width=0.2\textwidth]{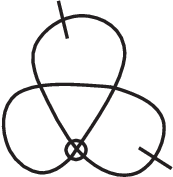}}  \
  \caption{Examples}\label{FL}
\end{figure}

 Virtual knot theory is a generalization of classical knot theory, and twisted knot theory is an extension of virtual knot theory.
 A classical link can be represented by a \emph{link diagram}, which is a 4-regular plane graph with extra structure at its vertices (see Figure \ref{FL}(a)).
A virtual link is a link which is embedded on thickened orientable surface.
A \emph{virtual link diagram} is a link diagram which may have virtual crossings, which are encircled crossings without over-under information (see Figure \ref{FL}(b)).
A \emph{virtual link} is an equivalence class of a virtual link diagram by \emph{Reidemeister moves} $R1$, $R2$, $R3$ and \emph{virtual Reidemeister moves} $V1$, $V2$, $V3$, $V4$ in Figure \ref{R}. All of these are called \emph{generalized Reidemeister moves}.

\begin{figure}[!htbp]
  \centering
  \includegraphics[width=1\textwidth]{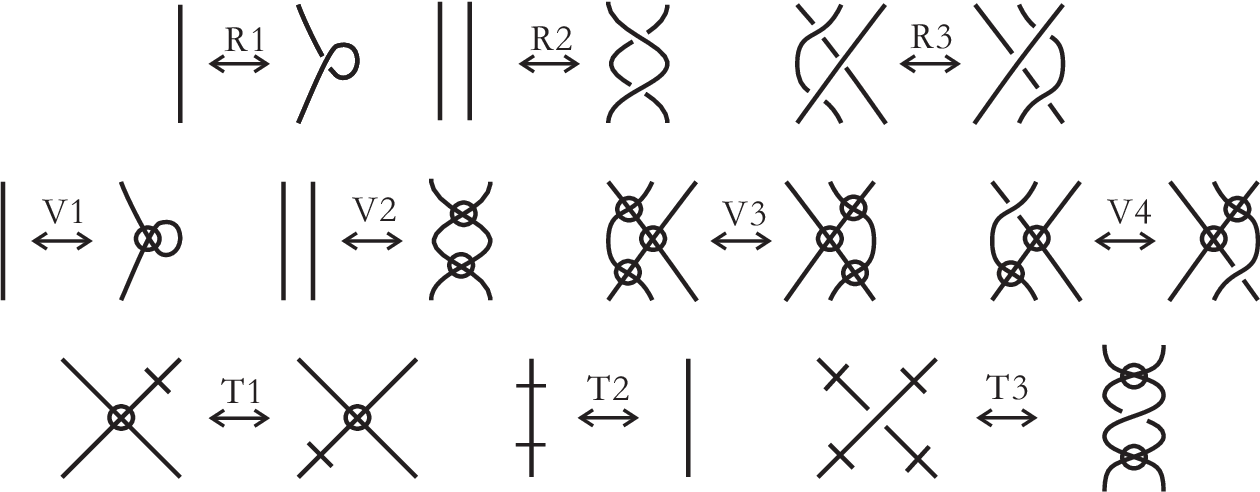}\\
  \caption{Extended Reidemeister moves}\label{R}
\end{figure}

In $2008$, M.O. Bourgoin \cite{M.O.B} generalized virtual links to twisted links. A twisted link is a link which is embedded on thickened orientable surface or thickened non-orientable surface.
A \emph{twisted link diagram} is a virtual link diagram which may have some bars (see Figure \ref{FL}(c)).
A \emph{twisted link} is an equivalence class of a twisted link diagram by \emph{Reidemeister moves} $R1$, $R2$, $R3$, \emph{virtual Reidemeister moves} $V1$, $V2$, $V3$, $V4$ and \emph{twisted Reidemeister moves} $T1$, $T2$, $T3$ in Figure \ref{R}. All of these are called \emph{extended Reidemeister moves}.

\begin{figure}[!htbp]
  \centering
   \subfigure[]{
  \includegraphics[width=0.5\textwidth]{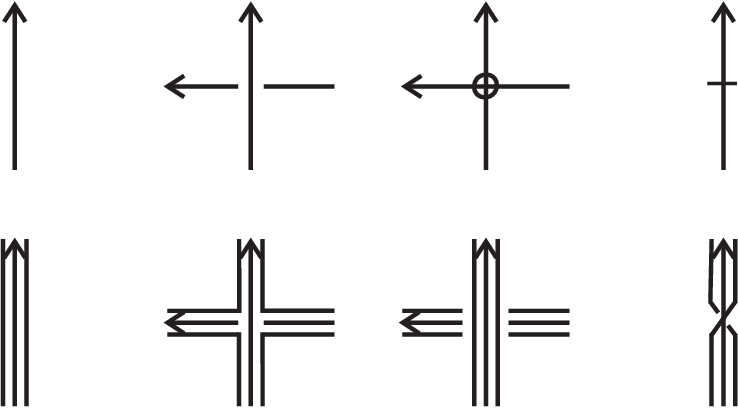}}~~~~~~~~
   \subfigure[]{
  \includegraphics[width=0.26\textwidth]{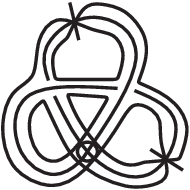}}
  \caption{Abstract link diagram}\label{ABS}
\end{figure}

A geometric interpretation for a twisted link diagram $D$ is obtained by considering its associated abstract link diagram $A(D)$ \cite{Kamada2020}. For a twisted link diagram, a regular neighborhood of the link is an \emph{abstract link diagram}. At a free arc, a regular neighborhood boundary moves forward along the arcs. At a classical crossing, a regular neighborhood boundary turns so as to avoid crossing the link diagram. At a virtual crossing, a regular neighborhood boundary goes through the virtual crossing. At a bar, a regular neighborhood boundary crosses to the other side of the link diagram. Note that a bar of $D$ implies a half-twisted of the ambient surface of $A(D)$ \cite{M.O.B} (see Figure \ref{ABS}). Obviously, the abstract link diagrams associated with extended Reidemeister moves are unchanged and orientation of link does't affect the bars. It should be pointed that there is no analogue of move $T1$ with a real crossing due to abstract link diagram.

Note that we can use the virtual Reidemeister moves or detour move even though there are bars on some arcs based on move $T1$ as shown in Figure \ref{DE}.

\begin{figure}[!htbp]
  \centering
  \includegraphics[width=0.7\textwidth]{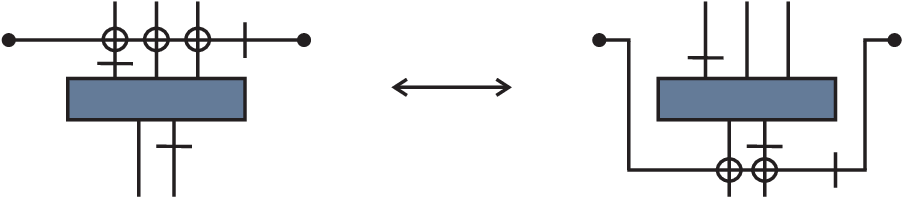}\\
  \caption{Detour move with bars}\label{DE}
\end{figure}

Goussarov, Polyak and Viro \cite{M.M.O} observed that there are two `` forbidden moves " $F1$ and $F2$ (see Figure \ref{FM1}) on virtual knot diagram. Any virtual knot diagram can be deformed into a trivial knot diagram when these two moves are allowed \cite{T.K, S.N}. We also discussed `` forbidden moves " in twisted knot theory in \cite{XD}.

\begin{figure}[!htbp]
  \centering
  \includegraphics[width=0.7\textwidth]{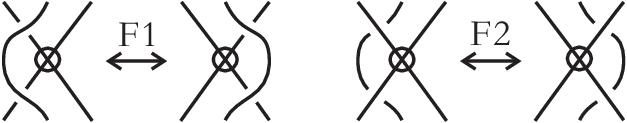}\\
  \caption{``Forbidden moves" $F1$ and $F2$}\label{FM1}
\end{figure}

 A \emph{virtual braid} on $n$ strands is a braid on $n$ strands in the classical sense, which may also contain virtual crossings. A \emph{twisted braid} on $n$ strands is a braid on $n$ strands in the virtual sense, which may have some bars on arcs.
The \emph{closure of a twisted braid} is obtained by joining with simple arcs the corresponding endpoints of the braid on its plane (see Figure \ref{C}).

\begin{figure}[!htbp]
  \centering
  \includegraphics[width=0.45\textwidth]{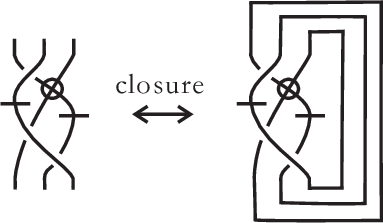}\\
  \caption{Closure}\label{C}
\end{figure}

The set of isotopy classes of virtual braids on $n$ strands forms a group, \emph{the virtual braid group}, denoted $\mathcal{VB}_{n}$, that can be described by generators and relations, generalizing the generators and relations of the classical braid group, introduced by Artin in \cite{EA}. In consideration of the twisted braid, we define \emph{the twisted braid group} denoted $\mathcal{TB}_{n}$ as a group which is the set of isotopy classes of twisted braids on $n$ strands.

\emph{A flat crossing} is a real crossing without over and under information (see Figure \ref{FL}(d)). \emph{A flat virtual link diagram} is a diagram with flat crossings and virtual crossings. \emph{Two flat virtual link diagrams are equivalent} if there is a sequence of
\emph{generalized flat virtual Reidemeister moves} taking one to the other. These moves are shown in
Figure \ref{R} R1-R3 and V1-V4, but with flat crossings taking the place of classical crossings.
Let \emph{a flat twisted link diagram} be a flat virtual link diagram which may have some bars (see Figure \ref{FL}(d)). \emph{Two flat twisted link diagrams are equivalent} if there is a sequence of \emph{extended flat twisted Reidemeister moves} taking one to the other. These moves are shown in Figure \ref{R}, but with flat crossings taking the place of classical crossings. We refer the reader to \cite{Kauffman2004, Kauffman2005} about recent study of flat virtual knots and links.
That is, \emph{a flat twisted link} is an equivalence class of a flat twisted link diagram by extended flat twisted Reidemeister moves.
Similarly, the flat twisted links may play an important role in understanding the twisted links.
And the flat twisted links have itself characteristics and properties.

\section{The Alexander Theorem}

In 2004, Kauffman and Lambropoulou \cite{Kauffman2004} gave a new method for converting virtual links to virtual braids. Indeed, the braiding method given by Kauffman and Lambropoulou \cite{Kauffman2004} is quite general and applies to all the categories in which braiding can be accomplished. Kauffman and Lambropoulou \cite{Kauffman2004} proved following Theorem \ref{virtual} by using the braiding method and some braiding techniques. In \cite{Kauffman2005}, Kauffman and Lambropoulou illusrated the braiding chart for crossings and the basic braiding move for the free up-arc by a different braiding algorithm and also proved following Theorem \ref{virtual}. In this section, we shall extend the virtual Alexander Theorem to twisted braids, in fact, to all the categories in which braids are constructed by using the braiding algorithm.

\begin{thm}(\cite{Kauffman2004}, Theorem 1)\label{virtual}
Every (oriented) virtual link can be represented by a virtual braid whose closure is isotopic to the original link.
\end{thm}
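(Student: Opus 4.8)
The plan is to follow the braiding algorithm of Kauffman and Lambropoulou: starting from an oriented virtual link diagram $L$, we first isotope it into a position where it \emph{almost} looks like a braid, and then we repeatedly apply local ``braiding moves'' to remove the remaining obstructions one at a time.

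First I would fix a point $O$ in the plane (the braid axis, thought of as the point at infinity or a marked puncture) and put $L$ in general position with respect to $O$: finitely many arcs, finitely many crossings (classical or virtual), no crossing on the rays through $O$, and each crossing in standard local position. Using the orientation, declare an arc of the diagram to be a \emph{down-arc} if it runs clockwise around $O$ and an \emph{up-arc} if it runs counterclockwise. A virtual link diagram with no up-arcs is, by definition, the closure of a virtual braid, since then every strand flows monotonically around the axis. So the goal is to remove all up-arcs.

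Next, subdivide the up-arcs at crossing points so that each up-arc is either \emph{free} (contains no crossing) or carries exactly one crossing. For a free up-arc I would perform the \emph{basic braiding move}: cut the arc at an interior point and drag the two resulting ends around the braid axis --- one end the long way, over (or under, or virtually over) the rest of the diagram, the other end the short way --- so that the reconnected strand now runs clockwise. Because we are in the virtual category, the dragged strand may be taken to meet the rest of the diagram only in virtual crossings; this is precisely a detour move, hence an allowed virtual isotopy, and the virtual link type is unchanged. For an up-arc carrying one crossing I would use the corresponding entry of the braiding chart for crossings: rotate the crossing together with a small neighborhood around the axis (again at the cost of new virtual crossings) so that the two arcs emanating from it both run clockwise. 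In each case the up-arc is replaced by finitely many arcs that all run clockwise.

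Finally I would argue termination. Choosing the order in which up-arcs are processed carefully --- for instance, always braiding an outermost up-arc, or processing them in angular order around $O$ --- one checks that a braiding move never converts a previously down-running arc into an up-arc. Hence the total number of up-arcs strictly decreases, and after finitely many steps the diagram has no up-arcs, i.e.\ it is the closure $\widehat{\beta}$ of a virtual braid $\beta$ with $\widehat{\beta}$ isotopic to $L$. The main obstacle is exactly this bookkeeping: one must set up the braiding moves and the processing order so that no new obstruction is created (or, failing that, so that a refined complexity measure --- up-arcs counted with nesting depth --- strictly decreases). The case analysis for crossings sitting on up-arcs and the verification that each dragging step is a genuine virtual isotopy via the detour move are the technical heart; the rest is routine general-position reasoning.
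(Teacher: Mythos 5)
Your proposal is correct and follows essentially the same route the paper takes (via the Kauffman--Lambropoulou braiding algorithm): eliminate up-arcs one at a time, braiding free up-arcs by a cut-and-drag detour that meets the rest of the diagram only in virtual crossings, braiding crossings on up-arcs via a chart, and observing that each step strictly decreases the number of up-arcs since the new strands are monotone and all new crossings are virtual. Your radial formulation around a braid axis versus the paper's vertical up-arc/down-arc convention is only a cosmetic difference.
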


For the twisted links, we have a similar statement as follows.

\begin{thm}\label{twistedbraid}
Every (oriented) twisted link can be represented by a twisted braid whose closure is isotopic to the original link.
\end{thm}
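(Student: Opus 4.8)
The plan is to adapt the braiding algorithm of Kauffman and Lambropoulou \cite{Kauffman2004, Kauffman2005} — the one used to prove Theorem \ref{virtual} — to the twisted setting, treating each bar as a decoration that is carried along by the algorithm and whose motion is controlled by move $T1$ and the detour move with bars of Figure \ref{DE}. Concretely, given an oriented twisted link diagram $D$, I would first put $D$ in general position with respect to the height function given by the second coordinate: arrange that all classical and virtual crossings occur at pairwise distinct heights, that no crossing is a local extremum of the height function, and — the one new requirement — that every bar lies in the interior of a short strictly monotone sub-arc of $D$ and at a height distinct from that of every crossing. This can always be arranged by a preliminary isotopy, sliding each bar along its strand to a convenient monotone spot on the arc between the two crossings that bound it (recall that a bar may be slid freely along an arc, and past virtual crossings by move $T1$, but \emph{not} past a classical crossing, so the bar is merely relocated, not moved across a crossing).

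Next I would decompose $D$ into \emph{up-arcs} and \emph{down-arcs} relative to the height function and run the braiding procedure exactly as in the virtual case: crossings whose two local strands both point upward already appear in braided form, while crossings with one up-strand and one down-strand, crossings with both strands pointing downward, and all remaining down-arcs are successively eliminated by \emph{braiding moves} — cutting a down-arc (or rotating a crossing) and pulling the resulting up-going strand around the rest of the diagram, introducing virtual crossings wherever it passes over previously placed material. Since each braiding move is realized on the diagram by a planar isotopy together with generalized Reidemeister moves and detour moves, it preserves the twisted link type, and after finitely many steps every strand is monotone upward, i.e. $D$ has been converted into the closure of a twisted braid $\beta$. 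Closing up $\beta$ and undoing the braiding moves recovers $D$, so the closure of $\beta$ is isotopic to $D$ as a twisted link, which is the assertion of the theorem; the termination and finiteness of the algorithm are inherited verbatim from the virtual argument.

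The hard part — really the only point beyond the virtual case — is checking that the bars survive the braiding procedure. A bar sitting on an up-arc is simply left untouched. A bar sitting on a down-arc is carried along when that down-arc is turned into an up-arc; during the braiding move the bar must travel through the newly created virtual crossings, and this is permitted precisely by the detour move with bars of Figure \ref{DE}, which is a consequence of move $T1$. Hence at every stage the bars remain on well-defined arcs of the diagram, the only effect of a braiding move on a bar being a detour that respects the twisted link type, and at the end each bar lies on a strand of $\beta$, so $\beta$ is a genuine twisted braid. One should also observe that the bars are never forced onto the simple closure arcs: a bar enters a braiding move only when it already sits on the down-arc being braided, so it can be kept inside the braid box throughout.

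Finally, I would note that the argument nowhere uses the over/under information at classical crossings except through the statements of the Reidemeister and detour moves themselves, so the identical proof, with flat crossings in place of classical crossings and the extended flat twisted Reidemeister moves of Section~2 in place of the classical ones, yields the corresponding Alexander Theorem for flat twisted links; indeed, as in \cite{Kauffman2004}, the braiding algorithm applies in every category in which braiding can be carried out.
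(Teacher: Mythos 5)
Your proposal is correct and follows essentially the same route as the paper: run the Kauffman--Lambropoulou braiding algorithm and handle the bars by observing that move $T1$ (equivalently, the detour move with bars of Figure \ref{DE}) lets a bar travel along the newly created strands through the virtual crossings they introduce, while bars are never pushed past classical crossings. The only differences are cosmetic: you use the opposite up/down convention, and where you argue abstractly that bars are ``carried along,'' the paper makes the same point concretely by exhibiting explicit braiding charts for crossings with bars (Appendix Figures \ref{braiding2}--\ref{braiding10}) and by noting that a bar on a free up-arc may be placed anywhere on the new braid strand.
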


\begin{proof}
Given a (oriented) twisted link diagram, we can take some orientation, and the oriented twisted link diagram is obtained. Then we use the braiding algorithm in \cite{Kauffman2005}, that is, the down-arcs (an arc with a downward direction) will stay in place while the up-arcs (an arc with an upward direction) shall be eliminated.
 Now, an up-arc will either be an arc of a crossing (called valid crossing) or it will be a free up-arc (a simple arc with an upward direction). The braiding algorithm is as follows.

 Firstly, we find all valid crossings and all free up-arcs in the twisted link diagram. Then we use numerical labels to mark the endpoints of valid crossings and free up-arcs, and get some marked crossings and marked free up-arcs.

 Now, we intend to braid these valid crossings and free up-arcs, respectively.
 For valid crossings but without bars, we refer to the braiding chart of Figure 7 in \cite{Kauffman2005} (see Appendix Figure \ref{braiding1}).
 Except for the local crossings shown in the illustrations, all other crossings of the new braid strands with the rest of the diagram are virtual. This is indicated abstractly by placing virtual crossings at the ends of the new strands.
 And for valid crossings with bars on arcs, only nine charts are listed (see Appendix Figure \ref{braiding2}-\ref{braiding10}), others are similar with the following severe cases.

\begin{figure}[!htbp]
  \centering
  \includegraphics[width=0.8\textwidth]{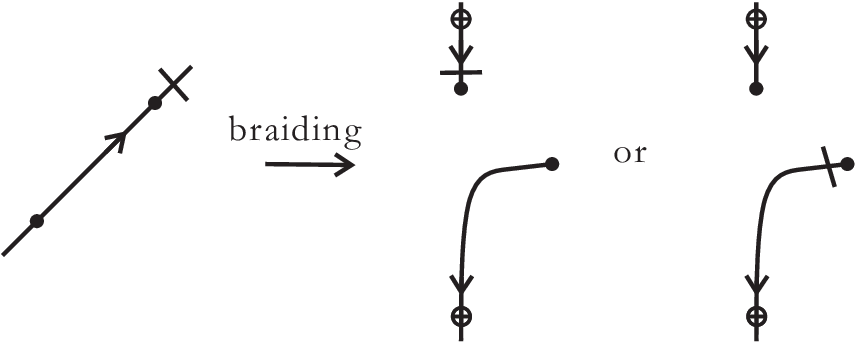}\\
  \caption{A free up-arc with bar}\label{braiding}
\end{figure}

It remains to braid the free up-arcs. The braiding method of the free up-arcs without bars refer to \cite{Kauffman2005} (see Figure \ref{braiding}, but without bars). For a free up-arc with bars, we place these bars in anywhere in the new braid strand, by $T1$ and since all new crossings are virtual (see Figure \ref{braiding}). Finally, we only need to connect the two endpoints corresponding to the same number label, and make sure that the top and bottom ends of the new braid strands correspond one-to-one.

At this time, we create a series of corresponding braid strands and we have one up-arc less. Note that joining the two corresponding braid strands yields a twisted tangle diagram obviously isotopic to the starting one.
\end{proof}

\begin{figure}[!htbp]
  \centering
   \subfigure[]{
  \includegraphics[width=0.25\textwidth]{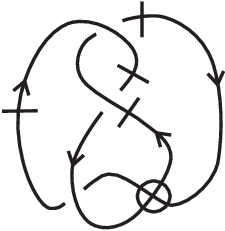}}   ~~~~~~~~~~~~~~~~ \
   \subfigure[]{
  \includegraphics[width=0.25\textwidth]{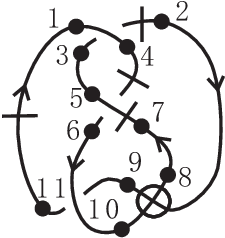}}
  \caption{Twisted link diagram} \label{EX1}
\end{figure}

In Figure \ref{EX1}-\ref{EX3}, we illustrate an example by the above braiding method. Note that the position of bars is not unique due to the detour move with bars on arcs.

We give a oriented twisted link diagram (see Figure \ref{EX1}(a)), which contains 3 classical crossings, 1 virtual crossing and 4 bars. Observe that there are $3$ valid crossings (2 classical crossings and 1 virtual crossings) and 3 free up-arcs. Now, numbers are used to mark endpoints of 3 valid crossings and 3 free up-arcs (see Figure \ref{EX1}(b)). Figure \ref{EX2} showed braid structures corresponding to 3 valid crossings and 3 free up-arcs according to braid chart (see Figure \ref{braiding} and Appendix Figure \ref{braiding1}-\ref{braiding10}). Finally, we connect the same number label to get a twisted braid diagram (see Figure \ref{EX3}). It is easy to verify that the closure of the twisted braid diagram isotopic to the starting one. Note that the crossings not containing up-arcs remains.

\begin{figure}[!htbp]
  \centering
   \subfigure[]{
  \includegraphics[width=0.7\textwidth]{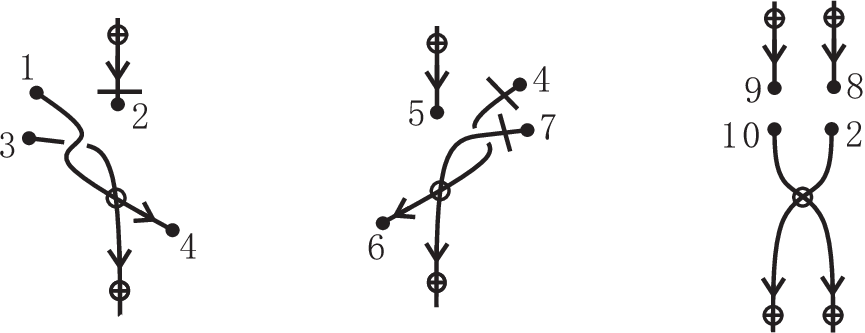}}   \
   \subfigure[]{
  \includegraphics[width=0.7\textwidth]{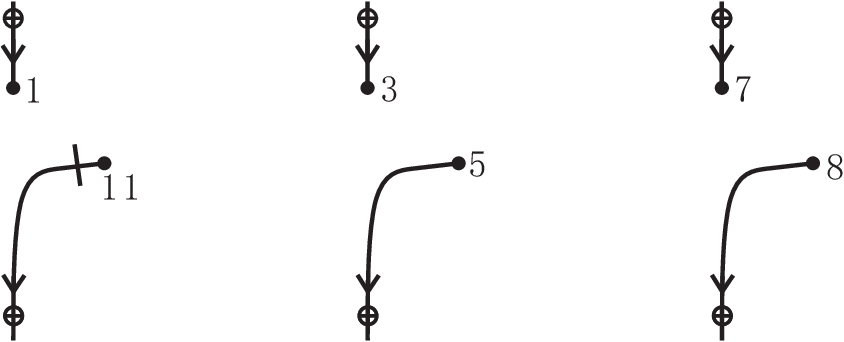}}  \
  \caption{Braiding structures corresponding to 3 valid crossings and 3 free up-arcs}  \label{EX2}
\end{figure}

\begin{figure}[!htbp]
  \centering
  \includegraphics[width=0.3\textwidth]{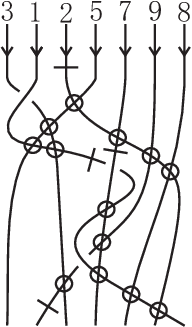}\\
  \caption{Twisted braid diagram} \label{EX3}
\end{figure}

For the flat twisted links, we have.

\begin{thm}
Every (oriented) flat twisted link can be represented by a flat twisted
braid whose closure is isotopic to the original link.
\end{thm}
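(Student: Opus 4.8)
The plan is to reduce the flat case to the general braiding algorithm already established in the proof of Theorem \ref{twistedbraid}, observing that nothing in that argument used the over/under information at a classical crossing in an essential way. Concretely, I would start from a given oriented flat twisted link diagram, fix an orientation, and classify its arcs into down-arcs and up-arcs exactly as before; an up-arc is either part of a flat crossing (a valid crossing) or a free up-arc. The braiding chart of Kauffman and Lambropoulou (Appendix Figure \ref{braiding1}) and its bar-decorated variants (Appendix Figure \ref{braiding2}--\ref{braiding10}) are to be reinterpreted with every classical crossing replaced by a flat crossing; since the chart only records which strands are created, where virtual crossings are inserted at the ends of the new strands, and where the bars sit, the replacement is legitimate and produces a valid local replacement for each flat valid crossing.

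Next I would carry out the braiding move on valid flat crossings: for a flat valid crossing without bars we apply the flat analogue of the braiding chart, and for one carrying bars we apply the corresponding decorated chart, in each case using the detour move with bars (Figure \ref{DE}, justified by $T1$) to push all newly created crossings into virtual crossings and to slide the bars onto the new braid strands. Then I would braid the free up-arcs: a free up-arc without bars is replaced by a descending strand with virtual crossings at its ends as in Figure \ref{braiding} (without bars), and a free up-arc carrying bars is replaced by a descending strand on which the bars are placed anywhere, again legitimate because all the new crossings are virtual and $T1$ lets the bars be relocated. After each such move the number of up-arcs strictly decreases, and joining the two new strands labeled by the same numeral yields a flat twisted tangle diagram isotopic (under extended flat twisted Reidemeister moves) to the one before the move. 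Iterating until no up-arcs remain leaves a flat twisted braid whose closure is the original diagram.

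The only genuine point that needs checking — and the step I expect to be the main obstacle — is that the isotopies and Reidemeister moves invoked in the virtual/twisted proof remain available in the flat category. The flat Reidemeister moves are exactly R1--R3 and V1--V4 with flat crossings substituted for classical ones, together with the twisted moves T1--T3; in particular there is still no flat analogue of $T1$ involving a real (flat) crossing, so the decorated braiding charts must only ever move bars across \emph{virtual} crossings, which is precisely how they are set up. One should also confirm that the ``severe cases'' alluded to after Appendix Figure \ref{braiding10} behave identically once crossings are flattened, since flattening can only identify previously distinct over/under cases and never creates a new obstruction. Granting these verifications, the argument is word-for-word the proof of Theorem \ref{twistedbraid} with ``classical crossing'' everywhere replaced by ``flat crossing'', so the proof can be written simply as: the braiding algorithm of Theorem \ref{twistedbraid} applies verbatim in the flat twisted category, the flat charts being obtained from the charts of Appendix Figure \ref{braiding1}--\ref{braiding10} by replacing classical crossings with flat crossings, and all moves used (detour with bars, $T1$, flat R1--R3, V1--V4) are valid flat twisted Reidemeister moves; hence every oriented flat twisted link is the closure of a flat twisted braid.
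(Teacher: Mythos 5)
Your proposal is correct and follows exactly the paper's route: the paper's own proof is the one-line remark that the argument of Theorem \ref{twistedbraid} goes through verbatim with flat crossings in place of classical crossings, which is precisely the reduction you carry out (your version simply spells out the verifications the paper leaves implicit).
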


\begin{proof}
The proof of the above theorem is similar to that of the Theorem \ref{twistedbraid}, but with flat crossings taking the place of classical crossings.
\end{proof}

\section{The Markov Theorem}

It is natural that different choices shall result in different twisted braids when applying the braiding algorithm as well as local isotopy changes on the diagram level. But the Markov Theorem tells us that such a presentation is unique up to certain basic moves for virtual braid case.

\subsection{The Markov Theorem For Twisted Braids}
In 2006, Kauffman and Lambropoulou in \cite{Kauffman2005} proved following Theorem \ref{Markov} by using the L-move methods of Lambropoulou and Rourke in \cite{SL1}. In this section, we generalize virtual Markov Theorem (Theorem \ref{Markov}) to twisted links by applying the L-move methods.

\begin{figure}[!htbp]
  \centering
    \subfigure[Virtual and real conjugation]{
  \includegraphics[width=0.8\textwidth]{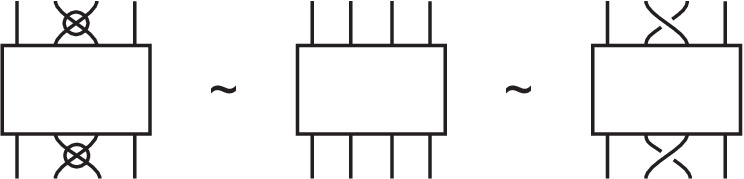}}  \
   \subfigure[Left and right virtual $L_{v}$-moves]{
  \includegraphics[width=0.8\textwidth]{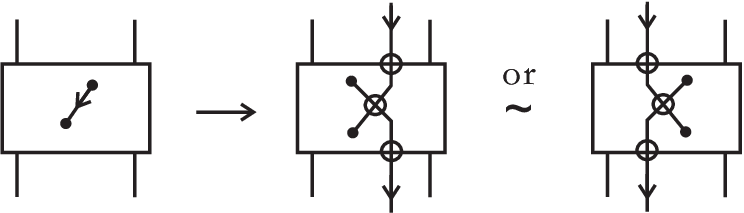}}   \
  \subfigure[Right and left real $L_{v}$-moves]{
  \includegraphics[width=0.8\textwidth]{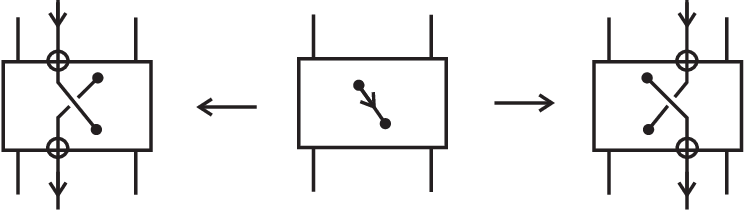}}   \
  \subfigure[Right and left under-threaded $L_{v}$-moves]{
  \includegraphics[width=0.8\textwidth]{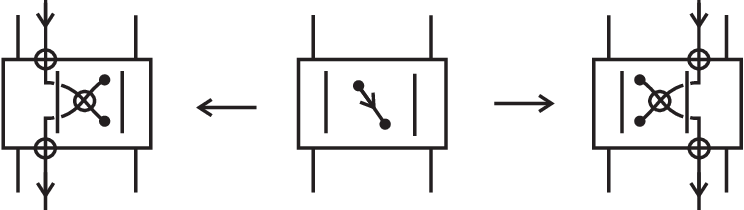}}   \
  \caption{The moves appear in (1)-(4) of Theorem \ref{Markov}}\label{LV}
\end{figure}

\begin{figure}[!htbp]
  \centering
  \includegraphics[width=0.6\textwidth]{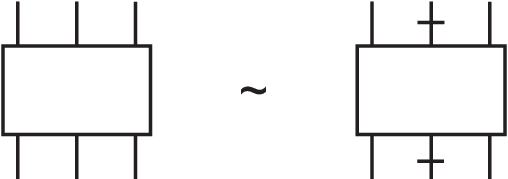}\\
  \caption{Twisted conjugation}\label{TWC}
\end{figure}

\begin{thm}(\cite{Kauffman2005}, Theorem 2)\label{Markov}
Two oriented virtual links are isotopic if and only if any two corresponding virtual braids differ by virtual braid isotopy and a finite sequence of the following moves or their inverses:
\begin{itemize}
  \item [(1)] Real conjugation.
  \item [(2)] Right virtual $L_{v}$-moves.
  \item [(3)] Right real $L_{v}$-moves.
  \item [(4)] Right and left under-threaded $L_{v}$-moves.
\end{itemize}
\end{thm}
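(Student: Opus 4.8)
This is Theorem~2 of \cite{Kauffman2005}, established there by the $L$-move technique of Lambropoulou and Rourke \cite{SL1}; I sketch the argument one would give. The \emph{if} direction is the routine half: one checks that each listed move leaves the isotopy class of the braid closure unchanged. Real conjugation $\beta \mapsto \alpha\beta\alpha^{-1}$ closes to the same link because, after the closing arcs are attached, the $\alpha$ and $\alpha^{-1}$ strands become parallel and cancel by $R2$; each $L_v$-move inserts a single new strand running alongside the braid axis which, once the closure arcs are in place, is removed by a detour move together with one $R1$ (real or virtual). So it remains to prove the \emph{only if} direction.

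For that, fix once and for all the braiding algorithm used in the proof of Theorem~\ref{virtual}, turning any oriented virtual link diagram into a virtual braid whose closure is isotopic to it. Given virtual braids $\beta_1$ and $\beta_2$ with $\widehat{\beta_1}$ isotopic to $\widehat{\beta_2}$, write the isotopy as a finite sequence of generalized Reidemeister moves and planar isotopies, and put each intermediate diagram in general position with respect to the braid axis. The plan is to run the braiding algorithm on every diagram in the sequence and to show that, as we pass from one diagram to the next, the braid changes only by braid isotopy, real conjugation, and $L_v$-moves. Two things must be analysed: (a) a local Reidemeister or virtual move carried out in a region disjoint from the braid axis alters the braid only by braid isotopy, plus a single $L_v$-move in the $R1$/$V1$ case; and (b) an arc pushed across the braid axis --- equivalently, an up-arc turning into a down-arc or vice versa --- is exactly what the braiding algorithm resolves by ``turning the strand back'', and the before and after braids then differ by an $L$-move whose type (virtual, real over, real under) records how that arc meets the rest of the diagram. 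One also checks that two different runs of the braiding algorithm on the \emph{same} diagram (different choices of up-arcs, different insertion points for the new strands) yield braids related by $L_v$-moves and conjugation, using the detour move to slide the auxiliary virtual crossings. Assembling these steps, $\beta_2$ is obtained from $\beta_1$ by braid isotopy together with a finite sequence of $L$-moves, conjugations, and their inverses.

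The last step is to cut this general list down to the four families in the statement. A left move is converted to the corresponding right one by conjugating: dragging the new strand around the closure from the left of the braid to its right is an isotopy of the closure realised on the braid level by a conjugation, so left conjugation, left virtual $L_v$-moves, and left real $L_v$-moves all follow from (1)--(3); the under-threaded $L_v$-moves must be kept on both sides in (4) because a detour move cannot carry a strand across an \emph{under}-crossing. One also verifies that an $L$-move at an interior point of a strand decomposes into an $L_v$-move at a crossing (or at a free up-arc) together with braid isotopy. The main obstacle throughout is precisely this bookkeeping: tracking, over the whole isotopy, whether each strand that is turned back passes over, under, or virtually over the rest of the braid, and checking that real conjugation together with the three right $L_v$-families and the two under-threaded $L_v$-families already generates every $L$-move that can arise; the remaining verifications are sequences of detour moves and $R1$/$R2$ cancellations.
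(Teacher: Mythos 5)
The paper does not prove this statement at all: it is quoted verbatim as Theorem~2 of \cite{Kauffman2005}, and the only ``proof'' in the paper is the attribution to Kauffman and Lambropoulou's $L$-move method. Your sketch correctly identifies this and outlines the same $L$-move strategy used in the cited source (closure-invariance of the moves for the easy direction, braiding every diagram in the isotopy sequence and tracking the resulting $L$-moves for the converse, then reducing left moves to right ones by conjugation), so it is consistent with the approach the paper relies on.
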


 The above moves are shown in Figure \ref{LV}. Definitions of these moves refer to \cite{Kauffman2005}.

In the set of virtual braids, the braid move (1)-(4) and the virtual braid isotopy generate an equivalent
class, called $L$-equivalent.

Note that virtual and real conjugation are closely associated with generalized Reidemeister moves $V2$ and $R2$, respectively. Virtual and real $L_{v}-$move are closely associated with generalized Reidemeister moves $V1$ and $R1$, respectively. Left and right under-threaded $L_{v}-$move are closely associated with generalized Reidemeister moves $V1$ and $R2$.

Note that in the statement of Theorem \ref{Markov}, virtual conjugation and left virtual or real $L_{v}-$moves are not used because Kauffman and Lambropoulou \cite{Kauffman2005} has showed that all these moves follow from virtual braid moves (1)-(4) together with virtual braid isotopy.

For twisted braids, we shall introduce a new move based on twisted Reidemeister moves $T2$, that is, \emph{twisted conjugation} (see Figure \ref{TWC}).
Then we can extend the above statement on virtual links to twisted links.

\begin{thm}\label{UU}
Two oriented twisted links are isotopic if and only if any two corresponding twisted braids differ by twisted braid isotopy and a finite sequence of the following moves or their inverses:
\begin{itemize}
  \item [(1)] Real conjugation and twisted conjugation.
  \item [(2)] Right virtual $L_{v}$-moves.
  \item [(3)] Right real $L_{v}$-moves.
  \item [(4)] Right and left under-threaded $L_{v}$-moves.
\end{itemize}
\end{thm}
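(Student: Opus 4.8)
The plan is to adapt the $L$-move machinery of Lambropoulou and Rourke, in the form used by Kauffman and Lambropoulou to establish Theorem \ref{Markov}, to the twisted category, treating the bars as extra decoration that is carried along passively except at two well-understood spots. The \emph{if} direction is routine and needs no new idea: taking the closure of a twisted braid turns each listed braid move into an isotopy of twisted link diagrams. Real conjugation and the three families of $L_{v}$-moves close up to the same sequences of $R1$, $R2$, $V1$, $V2$ (and their detoured versions with bars, via Figure \ref{DE}) that were checked in the virtual case, and twisted conjugation closes up to a single $T2$-move since a bar placed at the top of a strand together with a bar placed at the bottom of the same strand become an adjacent cancelling pair on the corresponding closure arc. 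Twisted braid isotopy obviously preserves the closure, so this direction follows verbatim from \cite{Kauffman2005} with bars appended.

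For the \emph{only if} direction, suppose $\beta$ and $\gamma$ are twisted braids whose closures $\widehat{\beta},\widehat{\gamma}$ are isotopic, so that these two diagrams are joined by a finite sequence of planar isotopies and extended Reidemeister moves $R1$--$R3$, $V1$--$V4$, $T1$--$T3$. I would first prove the geometric ($L$-move) version of the theorem: two twisted braids have isotopic closures if and only if they are related by twisted braid isotopy together with a finite sequence of real, virtual, over- and under-threaded $L$-moves and one further \emph{twisted} $L$-move, which reroutes a braid strand around the axis and places a single bar on the new strand. This is established by applying the braiding algorithm of Theorem \ref{twistedbraid} to every intermediate diagram and comparing the braided words before and after each elementary move. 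Away from the bars a twisted diagram behaves exactly like a virtual one, so the effect of $R1$--$R3$, $V1$--$V4$, of planar isotopy (threading an arc past the axis), and of the internal choices of the braiding algorithm (orientation, order of elimination of up-arcs, side on which the new strands run) is precisely as analysed in \cite{Kauffman2005}. The moves $T1$ and $T3$ only slide a bar through virtual crossings, so by the detour move with bars they alter the braided word by twisted braid isotopy alone; a $T2$-move on a cancelling pair of bars lying inside the braid box again produces twisted braid isotopy, while a $T2$-pair straddling a closure arc is created or destroyed by a twisted $L$-move on that strand.

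Next I would carry out the algebraic reduction in parallel with \cite{Kauffman2005}: an arbitrary $L$-move is written as a composition of twisted braid isotopy, conjugations, and $L_{v}$-moves performed at the right-hand edge of the braid box, which is exactly what reduces left $L_{v}$-moves, virtual conjugation, and over-threaded $L_{v}$-moves to the moves $(1)$--$(4)$ of Theorem \ref{Markov}. The only genuinely new step is to reduce a twisted $L$-move: after detouring its new bar to the seam of the closure and using ordinary conjugation together with a virtual $L_{v}$-move to bring the affected strand into standard position, the twisted $L$-move becomes a twisted conjugation. Feeding these reductions into the geometric statement of the previous paragraph converts $L$-equivalence of twisted braids into equivalence under twisted braid isotopy and the moves listed in the theorem, completing the argument.

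The main obstacle I expect is the bookkeeping for the bars at exactly the two points where they interact with braiding. First, one must verify that the ``place the bars anywhere on the new braid strand'' freedom used in the proof of Theorem \ref{twistedbraid} yields braids that are twisted braid isotopic, or differ by a twisted conjugation, so that the braiding algorithm is well defined up to the permitted moves even in the presence of bars. Second, one must pin down the twisted $L$-move and its reduction to twisted conjugation and, in particular, confirm that no separate ``real twisted $L$-move'' is required; this is ultimately forced by the absence of a $T1$-move with a real crossing noted in Section 2, which is what prevents a bar from ever being created on a strand running over or under another. Once these two bar-specific points are settled, the remainder of the proof is the virtual argument of \cite{Kauffman2005} run with bars carried along without further interaction.
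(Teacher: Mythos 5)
Your overall strategy---running the Kauffman--Lambropoulou $L$-move argument with the bars carried along passively and isolating the spots where they genuinely interact with braiding---is the same as the paper's, and your treatment of the closure direction, of $T1$ (via the detour move with bars, Figure \ref{DE}), and of a cancelling $T2$-pair is consistent with what the paper does. The genuine gap is your handling of $T3$. You assert that ``the moves $T1$ and $T3$ only slide a bar through virtual crossings, so by the detour move with bars they alter the braided word by twisted braid isotopy alone.'' That mischaracterizes $T3$: it is the one twisted move that involves a \emph{classical} crossing, exchanging a crossing flanked by bars on one side for the opposite crossing flanked by bars on the other side; its braid-group shadow is the relation $b_ib_{i+1}\sigma_i b_{i+1}b_i = v_i\sigma_i v_i$ in the presentation of $\mathcal{TB}_n$. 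When such a move occurs entirely on down-arcs it is indeed twisted braid isotopy, but when it involves up-arcs the braiding algorithm produces words that are not related by braid isotopy alone; this is precisely where the paper's verification (Figure \ref{T3}) invokes twisted conjugation, as it states explicitly. By folding $T3$ into the virtual detour analysis you have skipped the one case in the ``only if'' direction that actually forces the new move into the list.

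A secondary point: you locate the need for twisted conjugation solely at a $T2$-pair straddling a closure arc, via a new ``twisted $L$-move'' that you then propose to reduce to twisted conjugation. That mechanism is fine as far as it goes---it is essentially how the ``if'' direction sees twisted conjugation, since $b_i\alpha b_i$ closes up to a cancelling pair of bars on one closure arc---but it does not substitute for the $T3$ check above, and the reduction of your twisted $L$-move to twisted conjugation is asserted rather than carried out. Your observation that no separate ``real twisted $L$-move'' is needed, because there is no analogue of $T1$ through a classical crossing, is correct and matches the paper's remark in Section 2.
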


 The above moves are shown in Figure \ref{LV} and Figure \ref{TWC}.

\begin{proof}
\begin{figure}[!htbp]
  \centering
   \subfigure[]{
  \includegraphics[width=1\textwidth]{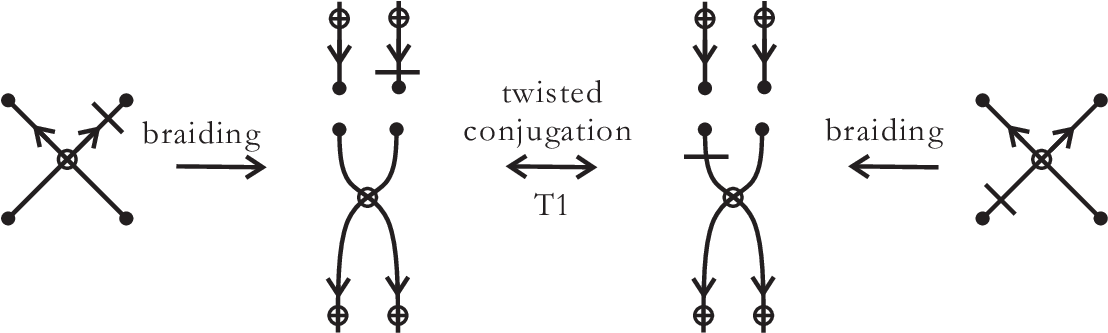}}   \
   \subfigure[]{
  \includegraphics[width=1\textwidth]{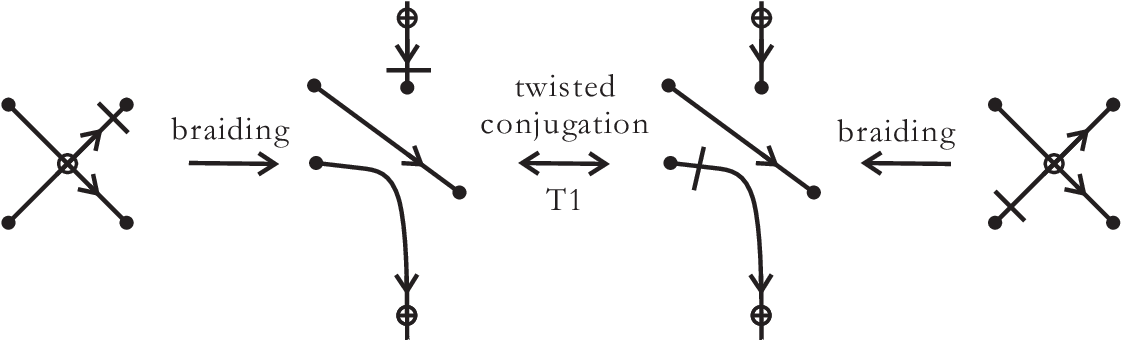}}   \
  \subfigure[]{
  \includegraphics[width=1\textwidth]{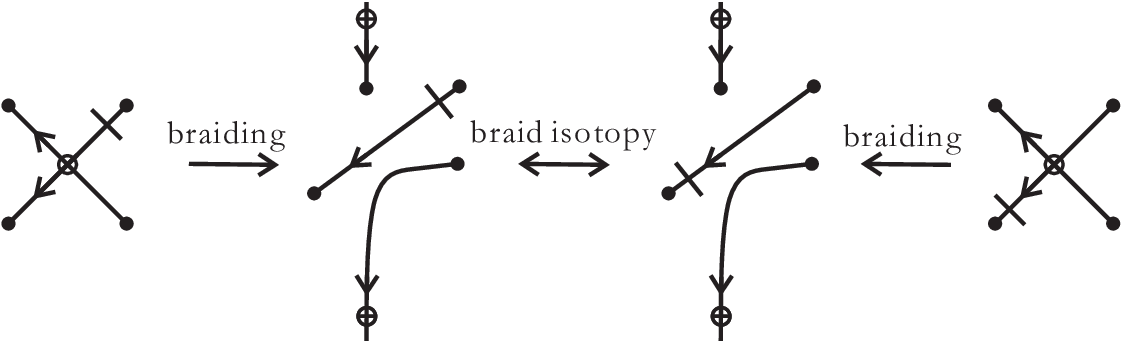}}   \
  \caption{The $T1$ moves with one up-arc less}\label{T1}
\end{figure}

\begin{figure}[!htbp]
  \centering
  \subfigure[]{
  \includegraphics[width=0.7\textwidth]{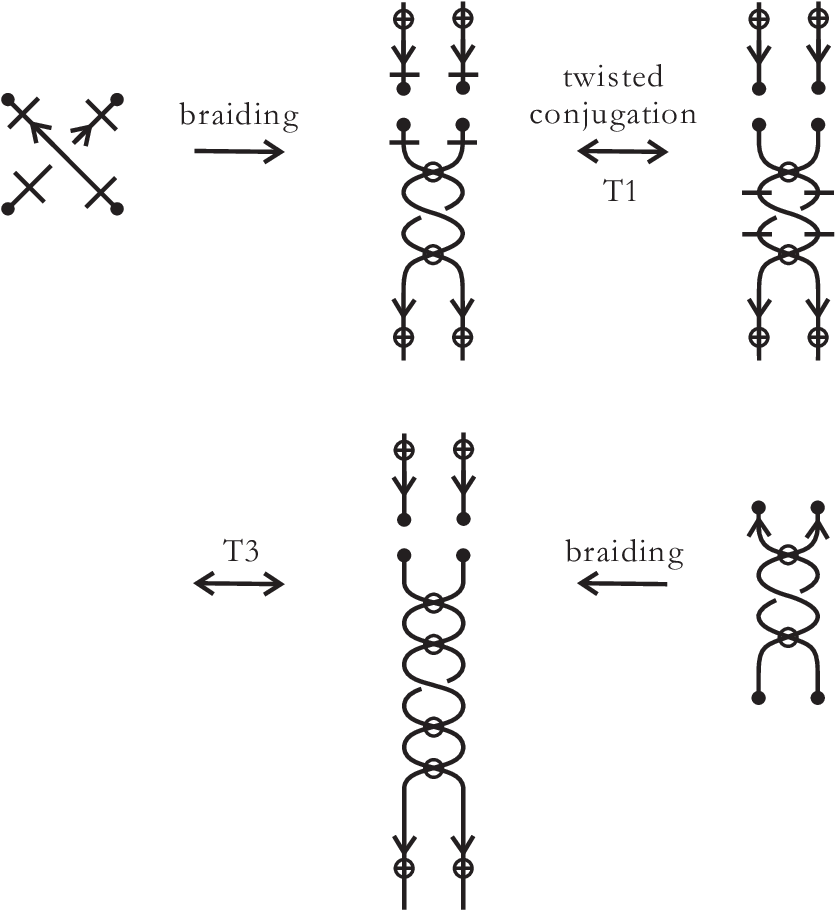}}   \
   \subfigure[]{
  \includegraphics[width=0.7\textwidth]{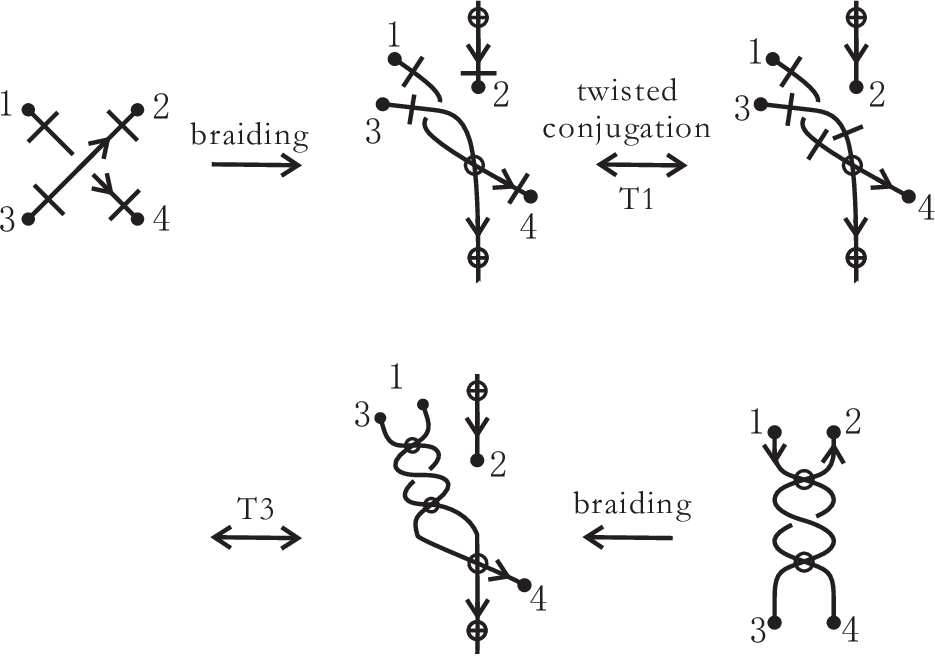}}   \
  \caption{The $T3$ moves with one up-arc less}\label{T3}
\end{figure}

By the definition of twisted conjugation and Theorem \ref{Markov}, sufficiency is obvious. We have to show the converse.

 We follow the proof method of in Theorem \ref{Markov}.
 Firstly, for a given twisted link diagram, one can review the braiding algorithm given above. We number the valid crossings and free up-arcs in the twisted link diagram, and braid them by using the braiding chart (see Figure \ref{braiding} and Appendix Figure \ref{braiding1}-\ref{braiding10}). Note that the choice of the endpoints may be different when choosing a valid crossing and a free up-arc because a free up-arc may have two endpoints or other subdividing points between two endpoints. Kauffman and Lambropoulou in \cite{Kauffman2005} showed two virtual braids are $L$-equivalent if these virtual braids differ from different choices of endpoint on a free up-arc (without bars). Obviously, it is also true for the twisted link diagram.

 For a free up-arc with a bar, one can show that the choice of the endpoint is arbitrary by moving the bar out of endpoints and combining the proofs of Kauffman and Lambropoulou in \cite{Kauffman2005}. And the braiding of the crossings are independent, so regardless of braiding order.

 Secondly, twisted link diagram is not unique. For the same link diagram, different direction on the plane will lead to difference of valid crossings and free up-arcs. Kauffman and Lambropoulou in \cite{Kauffman2005} also explained that different braids caused by this difference is $L$-equivalent for virtual links diagram. Obviously, it is also true for the twisted link diagram.

 Finally, for different but isotopy twisted link diagram, they are equivalent under the extended Reidemeister moves. Therefore, it suffices to show that twisted isotopy moves. Kauffman and Lambropoulou in \cite{Kauffman2005} have stated that the virtual braids corresponding to the classical Reidemeister moves $R1$, $R2$, $R3$, virtual Reidemeister moves $V1$, $V2$, $V3$, $V4$ and the special detour move are the $L$-equivalent. Note that the twisted Reidemeister move $T2$ are clearly established.

So we just have to show that the twisted braids corresponding to the twisted Reidemeister moves $T1$ and $T3$ differ by twisted braid isotopy and a finite sequence of the above moves or their inverses..

Likewise, we only need to check the moves that involve up-arcs, as the others follow by twisted braid isotopy. We first discuss moves $T1$ with at least one up-arc (see Figure \ref{T1}).

In the Figure \ref{T3}, we check two types of the moves $T3$. Here it is twisted conjugation that plays a major role.

\end{proof}
\subsection{The Markov Theorem For Flat Twisted Braids}

The \emph{flat virtual braids} were introduced in \cite{Kauffman2000}. The Markov Theorem for flat virtual braids is proved by Kauffman and Lambropoulou in \cite{Kauffman2005}.

\begin{thm}(\cite{Kauffman2005}, Theorem 4)\label{FUU}
Two oriented flat virtual links are isotopic if and only if any two corresponding flat virtual braids differ by flat virtual braid isotopy and a finite sequence of the following moves or their inverses:
\begin{itemize}
  \item [(1)] Flat conjugation.
  \item [(2)] Right virtual $L_{v}$-moves.
  \item [(3)] Right flat $L_{v}$-moves.
  \item [(4)] Right and left flat threaded $L_{v}$-moves.
\end{itemize}
\end{thm}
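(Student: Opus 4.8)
The plan is to follow the $L$-move methods of Lambropoulou and Rourke, exactly as in the proof of Theorem \ref{Markov}, carrying flat crossings in place of classical crossings throughout. First I would check sufficiency: each listed move leaves the isotopy class of the closure unchanged. For flat conjugation this is immediate from the virtual move $V2$ together with the flat version of $R2$; for the right virtual, right flat, and right and left flat threaded $L_{v}$-moves one draws the closures of both sides and identifies the difference as a finite sequence of $V1$, flat $R1$, flat $R2$, $V2$ and detour moves. Hence $L$-equivalent flat virtual braids have isotopic closures.

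For the converse, suppose $\beta_{1}$ and $\beta_{2}$ are flat virtual braids whose closures are isotopic flat virtual links. Applying the braiding algorithm of the flat Alexander Theorem along an isotopy between the two closures, the first step is to show that the braid obtained does not depend, up to $L$-equivalence, on the choices made in the algorithm: the choice of endpoint (or subdividing point) on a free up-arc, the direction in the plane used to decide which arcs are up-arcs, and the order in which the valid crossings and free up-arcs are braided. Each of these independence statements is the flat analogue of a statement proved by Kauffman and Lambropoulou in \cite{Kauffman2005}, and the argument transcribes verbatim once flat crossings replace classical ones. It then suffices to track a single sequence of generalized flat virtual Reidemeister moves --- $R1$, $R2$, $R3$, $V1$, $V2$, $V3$, $V4$ with flat crossings, plus the special detour move --- and to verify that the change each induces on the associated braid is generated by flat braid isotopy together with moves (1)--(4).

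The heart of the argument is this verification, carried out move by move using the braiding chart. When the support of a Reidemeister move contains no up-arc the induced change is flat braid isotopy. When it does, one reads off from the chart that the flat $R1$ move produces a right flat $L_{v}$-move, the $V1$ move produces a right virtual $L_{v}$-move, an $R2$ (respectively $V2$) move between a down-arc and an up-arc produces a flat (respectively virtual) conjugation or a right or left flat threaded $L_{v}$-move, while $R3$, $V3$, $V4$ and the detour move reduce to flat braid isotopy after choosing the local picture. One then shows, as in the remark following Theorem \ref{Markov}, that virtual conjugation and the left virtual and left flat $L_{v}$-moves are consequences of moves (1)--(4) and flat braid isotopy, so they need not be listed; and that an arbitrary interior $L_{v}$-move can be slid to the right-hand boundary of the braid at the cost of flat conjugations, which is precisely why flat conjugation appears in (1).

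The main obstacle I anticipate is the bookkeeping peculiar to the flat category: because a flat crossing carries no over/under information, the over-threaded and under-threaded $L_{v}$-moves that are distinct in the virtual Markov theorem coincide here, so one must check that the single family of flat threaded $L_{v}$-moves in (4) really does generate all the threaded moves that arise, and that no genuinely new move is produced by the collapse of over/under data at the flat crossings appearing in the braiding chart. Once this is confirmed, the proof is a direct transcription of the proof of Theorem \ref{Markov}.
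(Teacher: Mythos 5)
This statement is quoted from \cite{Kauffman2005} (Theorem 4); the paper gives no proof of it, so there is no internal proof to compare against. Your outline is a faithful reconstruction of the $L$-move argument in that reference, and it coincides with the strategy the paper itself follows for the twisted analogue in Theorem \ref{UU}: sufficiency by checking closures, then independence of the braiding choices, then a move-by-move verification against the braiding chart. The obstacle you flag --- the collapse of over/under data at flat crossings --- is already absorbed in the statement itself, since the single family of right and left flat threaded $L_{v}$-moves in (4) replaces the under-threaded moves of Theorem \ref{Markov} and no genuinely new moves arise; so your proposal is correct and takes essentially the same approach as the cited source.
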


 The above moves are shown in Figure \ref{LV}, but with flat crossings taking the place of classical crossings.

Similarly, A \emph{flat twisted braid} on $n$ strands is a braid on $n$ strands in the flat virtual sense, which may have some bars on arcs.
The \emph{closure of a flat twisted braid} is obtained by joining with simple arcs the corresponding endpoints of the braid on its plane.

For flat twisted braids, we still need to apply the twisted conjugation (see Figure \ref{TWC}).
Then we can extend the Theorem \ref{FUU} on flat virtual links to flat twisted links.

\begin{thm}
Two oriented flat twisted links are isotopic if and only if any two corresponding flat twisted braids differ by flat twisted braid isotopy and a finite sequence of the following moves or their inverses:
\begin{itemize}
  \item [(1)] Flat conjugation and twisted conjugation.
  \item [(2)] Right virtual $L_{v}$-moves.
  \item [(3)] Right flat $L_{v}$-moves.
  \item [(4)] Right and left flat threaded $L_{v}$-moves.
\end{itemize}
\end{thm}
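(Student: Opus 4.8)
The plan is to mirror the proof of Theorem~\ref{UU} verbatim, systematically replacing classical crossings by flat crossings and invoking Theorem~\ref{FUU} wherever the proof of Theorem~\ref{UU} invoked Theorem~\ref{Markov}. Sufficiency is immediate from the definitions: flat conjugation and the three $L_v$-type moves preserve the flat twisted link type of the closure by Theorem~\ref{FUU}, and twisted conjugation is a twisted Reidemeister move $T2$ performed near the braid axis (see Figure~\ref{TWC}), which does not alter the closure. So the real content is the converse.

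For necessity I would first run the braiding algorithm of Section~3, now with flat crossings, and observe — exactly as in the proof of Theorem~\ref{twistedbraid} and its flat analogue — that it converts a given oriented flat twisted link diagram into a flat twisted braid whose closure is isotopic to the original. Then one must show that any two flat twisted braids arising from isotopic flat twisted link diagrams are related by flat twisted braid isotopy together with the listed moves. This decomposes into the same three sources of ambiguity as in Theorem~\ref{UU}: (a) the choice of endpoints and subdividing points on a free up-arc carrying bars; (b) the choice of orientation of the plane, which changes which crossings are valid and which arcs are free up-arcs; and (c) the extended flat twisted Reidemeister moves relating the two diagrams. Item (a) is handled by sliding the bars off the endpoints using $T1$ and the detour move with bars (Figure~\ref{DE}) and then applying the argument of Kauffman and Lambropoulou in \cite{Kauffman2005}; since the braidings of distinct crossings are independent, braiding order is immaterial. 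Item (b) goes through verbatim as in \cite{Kauffman2005}. For item (c), the flat Reidemeister moves $R1$--$R3$, the virtual moves $V1$--$V4$, the special detour move and the twisted move $T2$ all give flat twisted braids that differ by the listed moves — the first group by Theorem~\ref{FUU}, and $T2$ trivially — so only $T1$ and $T3$ remain to be checked.

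The main obstacle, exactly as in Theorem~\ref{UU}, is the case analysis of $T1$ and $T3$ when at least one up-arc is involved (the analogues of Figures~\ref{T1} and~\ref{T3} with flat crossings). For $T1$ one examines each local position of the bar relative to the up-arc after braiding, uses $T1$ to transport the bar along the new (all-virtual) braid strand, and absorbs the remaining discrepancy by flat twisted braid isotopy together with a right virtual or a right flat $L_v$-move; flatness only collapses the over/under case distinctions of the classical picture, so the list of cases is shorter rather than longer, and in particular there is no analogue of a $T1$ move through a real crossing. For $T3$ the essential point is that the bar can be carried past the configuration at the cost of exactly one twisted conjugation: one slides the bar onto the braid closure arcs, conjugates, and returns, which is precisely the move of Figure~\ref{TWC}. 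Assembling (a), (b) and (c), the two flat twisted braids differ by flat twisted braid isotopy and a finite sequence of flat conjugation, twisted conjugation, right virtual $L_v$-moves, right flat $L_v$-moves, and right and left flat threaded $L_v$-moves, which is the assertion of the theorem.
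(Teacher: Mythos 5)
Your proposal is correct and takes essentially the same route as the paper, whose entire proof of this theorem is the one-line remark that it is ``similar to that of Theorem~\ref{UU}, but with flat crossings taking the place of classical crossings.'' You have simply unpacked that remark faithfully --- sufficiency from Theorem~\ref{FUU} plus twisted conjugation, and necessity via the braiding algorithm, the three sources of ambiguity, and the residual check of $T1$ and $T3$ with up-arcs --- so nothing further is needed.
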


 The above moves are shown in Figure \ref{LV} and Figure \ref{TWC}, but with flat crossings taking the place of classical crossings.

\begin{proof}
The proof of the above theorem is similar to that of the Theorem \ref{UU}, but with flat crossings taking the place of classical crossings.

\end{proof}

\section{Reduced Presentation For The Twisted Braid Groups}
\subsection{A reduced presentation for the twisted braid group}

\setlength{\parindent}{2em}
The set of isotopy classes of twisted braids on $n$ strands forms a group, \emph{the twisted braid group}, denoted $\mathcal{TB}_{n}$,
The group operation is the usual braid multiplication (form $bb^{'}$ by attaching the bottom strand ends of $b$ to the top strand ends of $b^{'}$).
$\mathcal{TB}_{n}$ is generated by the braid generators $\sigma_i$, the virtual generators $\upsilon_i$ and the twisted generators $b_i$ (see Figure \ref{T}).

\begin{figure}[!htbp]
  \centering
   \subfigure[$\sigma_{i}$]{
  \includegraphics[width=0.28\textwidth]{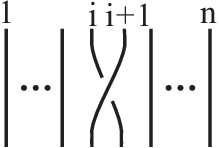}} ~~~
   \subfigure[$v_{i}$]{
  \includegraphics[width=0.28\textwidth]{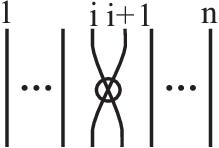}}  ~~~
   \subfigure[$b_{i}$]{
  \includegraphics[width=0.28\textwidth]{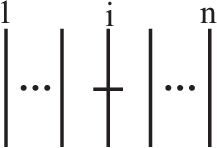}}
  \caption{Generators $\sigma_i$, $\upsilon_i$ and $b_i$ of twisted braid $\mathcal{TB}_{n}$}\label{T}
\end{figure}

Among themselves the generators satisfy the following relations:

braiding relations:
\begin{equation*}
\begin{split}
&\sigma_{i}\sigma_{i}^{-1}=1, \ \ 1 \leq i \leq n-1,\\
&\sigma_{i}\sigma_{i+1}\sigma_{i}=\sigma_{i+1}\sigma_{i}\sigma_{i+1}, \ \ 1 \leq i \leq n-2,\\
&\sigma_{i}\sigma_{j}=\sigma_{j}\sigma_{i}, \  \ |i-j|\geq 2, \\
 \end{split}
\end{equation*}

virtual relations:
\begin{equation*}
\begin{split}
&v_i^2=1,  \ \ 1 \leq i \leq n-1, \\
&v_iv_j=v_jv_i, \  \ |i-j|\geq 2, \\
&v_iv_{i+1}v_i=v_{i+1}v_iv_{i+1}, \  \ 1 \leq i \leq n-2, \\
\end{split}
\end{equation*}

twisted relations:
\begin{equation*}
\begin{split}
                  &b_i^{2}=1, \  \ 1 \leq i \leq n, \\
                  &b_ib_{j}=b_{j}b_i, \  \ i  \neq j, \\
 \end{split}
\end{equation*}

mixed relations:
\begin{equation*}
\begin{split}
 &\sigma_{i}v_j=v_j\sigma_{i}, \  \ |i-j|\geq 2,\\
 &v_i\sigma_{i+1}v_i=v_{i+1}\sigma_iv_{i+1}, \ \ 1 \leq i \leq n-2, \\
 &b_iv_{i}=v_{i}b_{i+1}, \ \ 1 \leq i \leq n-1, \\
 &b_ib_{i+1}\sigma_{i}b_{i+1}b_i=v_{i}\sigma_{i}v_i, \ \ 1 \leq i \leq n-1, \\
 &b_{i}v_{j}=v_{j}b_{i}, \  \ j>i \ or \ j<i-1, \\
 &b_{i}\sigma_{j}=\sigma_{j}b_{i}, \ \ j>i \ or \  j<i-1. \\
 \end{split}
\end{equation*}

Note that the relations
$$b_ib_{i+1}\sigma_{i}b_{i+1}b_i=v_{i}\sigma_{i}v_i, \ \ 1 \leq i \leq n-1 $$

 is equivalent to the relations
 $$b_{i+1}b_i\sigma_{i}b_i b_{i+1}=v_{i}\sigma_{i}v_i, \ \ 1 \leq i \leq n-1. $$

And note that the relations
$$b_iv_{i}=v_{i}b_{i+1}, \ \ 1 \leq i \leq n-1 $$

 is equivalent to the relations
 $$v_{i}b_i=b_{i+1}v_{i}, \ \ 1 \leq i \leq n-1.$$

The \emph{classical braid group} $\mathcal{B}_{n}$ is generated by the braid generators $\sigma_i$.
By an argument in \cite{Kamada2007}, we see that the subgroup of $\mathcal{TB}_{n}$
generated by $\sigma_i (i=1,\cdots, n)$ and unit element is isomorphic to the the classical braid group $\mathcal{B}_{n}$, the subgroup
generated by $v_i (i=1,\cdots, n-1)$ and unit element is isomorphic to the symmetric group $\mathcal{S}_{n}$ and the subgroup
generated by $b_i (i=1,\cdots, n)$ is isomorphic to the $2^{n}$ Abelian group $Z_{2}\bigotimes Z_{2}\bigotimes  \cdots \bigotimes Z_{2}$.

The \emph{virtual braid group} $\mathcal{VB}_{n}$ is generated by the braid generators $\sigma_i$ and the virtual generators $\upsilon_i$ as shown in Figure \ref{T}. In 2004, Kauffman and Lambropoulou \cite{Kauffman2004} gave following reduced presentation for $\mathcal{VB}_{n}$.

\begin{thm}(\cite{Kauffman2004}, Theorem 2)\label{reducedpre}
The virtual braid group $\mathcal{VB}_{n}$ has the following reduced
presentation:
\begin{equation*}\label{1}
\begin{split}
  \mathcal{VB}_{n}=\langle \ \sigma_{1}, v_{1}, \cdots, v_{n-1} \ | \ &v_{i}v_{i+1}v_{i}=v_{i+1}v_{i}v_{i+1}, \ v_{i}v_{j}=v_{j}v_{i}, \ |i-j|\geq 2 ,\\
  &v_{i}^{2}=1, \ 1 \leq i \leq n-1, \ \sigma_{1}v_{j}=v_{j}\sigma_{1}, \ j>2, \\
  &(v_{1}\sigma_{1}v_{1})(v_{2}\sigma_{1}v_{2})(v_{1}\sigma_{1}v_{1})=(v_{2}\sigma_{1}v_{2})(v_{1}\sigma_{1}v_{1})(v_{2}\sigma_{1}v_{2}), \\
 &\sigma_{1}(v_{2}v_{3}v_{1}v_{2}\sigma_{1}v_{2}v_{1}v_{3}v_{2})=(v_{2}v_{3}v_{1}v_{2}\sigma_{1}v_{2}v_{1}v_{3}v_{2})\sigma_{1}  \ \rangle.
  \end{split}
\end{equation*}
\end{thm}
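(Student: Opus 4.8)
The plan is to derive the reduced presentation from the standard presentation of $\mathcal{VB}_{n}$ by Tietze transformations, the crucial observation being that each generator $\sigma_i$ with $i\ge 2$ can be eliminated in favour of $\sigma_1$ and the virtual generators. The standard presentation has generators $\sigma_1,\dots,\sigma_{n-1},v_1,\dots,v_{n-1}$ with the braiding relations, the virtual (symmetric group) relations, and the mixed relations $\sigma_iv_j=v_j\sigma_i$ for $|i-j|\ge 2$ together with $v_i\sigma_{i+1}v_i=v_{i+1}\sigma_iv_{i+1}$. Since $v_i^{2}=1$, the last relation is equivalent to $\sigma_{i+1}=v_iv_{i+1}\sigma_iv_{i+1}v_i$, so by a downward induction each $\sigma_i$ equals $w_i\sigma_1w_i^{-1}$ for an explicit word $w_i$ in the $v_j$'s, with $w_1=1$, $w_2=v_1v_2$, $w_3=v_2v_3v_1v_2$, and so on. First I would, for $i=2,\dots,n-1$ in turn, add the identity $\sigma_i=w_i\sigma_1w_i^{-1}$ (which is a consequence of the mixed relations) as a redundant relation and then use it to delete the generator $\sigma_i$; after these moves the presentation has generators $\sigma_1,v_1,\dots,v_{n-1}$ and a long list of relations obtained from the old ones by replacing every occurrence of $\sigma_i$ with $w_i\sigma_1w_i^{-1}$.

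The second step is to cut that list down to the four families in the statement, and half of this is immediate. The virtual relations are untouched; the relation $\sigma_1v_j=v_j\sigma_1$ for $j>2$ is the case $i=1$ of $\sigma_iv_j=v_j\sigma_i$; the relation $v_i\sigma_{i+1}v_i=v_{i+1}\sigma_iv_{i+1}$ has become the very identity used to eliminate $\sigma_{i+1}$, hence is now trivial; the displayed relation $(v_1\sigma_1v_1)(v_2\sigma_1v_2)(v_1\sigma_1v_1)=(v_2\sigma_1v_2)(v_1\sigma_1v_1)(v_2\sigma_1v_2)$ is exactly the braid relation $\sigma_1\sigma_2\sigma_1=\sigma_2\sigma_1\sigma_2$ after conjugation by $v_1$ and substitution of $\sigma_2=v_1v_2\sigma_1v_2v_1$; and the displayed relation $\sigma_1(v_2v_3v_1v_2\sigma_1v_2v_1v_3v_2)=(v_2v_3v_1v_2\sigma_1v_2v_1v_3v_2)\sigma_1$ is the commutation $\sigma_1\sigma_3=\sigma_3\sigma_1$ after substitution of $\sigma_3=v_2v_3v_1v_2\sigma_1v_2v_1v_3v_2$. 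What remains is to show that all the other substituted relations — the braid relations $\sigma_i\sigma_{i+1}\sigma_i=\sigma_{i+1}\sigma_i\sigma_{i+1}$ for $i\ge 2$, the far commutativities $\sigma_i\sigma_j=\sigma_j\sigma_i$ for $|i-j|\ge 2$ apart from $(1,3)$, and the mixed commutativities $\sigma_iv_j=v_j\sigma_i$ for $|i-j|\ge 2$ with $i\ge 2$ — are consequences of the four families, hence redundant and deletable.

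This redundancy check is the heart of the matter and the step I expect to be the main obstacle; it amounts to a ``shifting'' lemma propagating the index-$1$ relations and the $(1,3)$ commutation to all indices using only the symmetric group relations among the $v_j$'s. Concretely, I would first prove by induction on $i$ that in the reduced group the element $\hat\sigma_i:=w_i\sigma_1w_i^{-1}$ commutes with $v_j$ whenever $|i-j|\ge 2$, the inductive step pushing virtual generators past $\hat\sigma_{i-1}$ and past $v_{i-1},v_i$ by means of the already established lower instances and the virtual relations. With these mixed commutativities available, I would then obtain $\hat\sigma_1\hat\sigma_j=\hat\sigma_j\hat\sigma_1$ for $j\ge 4$ by conjugating the base commutation $\hat\sigma_1\hat\sigma_3=\hat\sigma_3\hat\sigma_1$ by $v_{j-1}v_j$, and finally reach every $\hat\sigma_i\hat\sigma_{i+1}\hat\sigma_i=\hat\sigma_{i+1}\hat\sigma_i\hat\sigma_{i+1}$ and $\hat\sigma_i\hat\sigma_j=\hat\sigma_j\hat\sigma_i$ by conjugating the index-$1$ relations along the recursion $\hat\sigma_{k+1}=v_kv_{k+1}\hat\sigma_kv_{k+1}v_k$. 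The delicate part is the bookkeeping: keeping track of which virtual generators slide past which $\hat\sigma_k$, and verifying that every conjugating word used is itself expressible by the relations already in hand; once this is settled, each surviving relation is exhibited as a consequence of the four families.

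To finish I would record the two directions as mutually inverse homomorphisms. The assignment $\sigma_1\mapsto\sigma_1$, $v_i\mapsto v_i$ extends to a homomorphism from the reduced group to $\mathcal{VB}_{n}$ because the four reduced relations all hold in $\mathcal{VB}_{n}$ (they lie among, or follow from, its defining relations), while $\sigma_i\mapsto w_i\sigma_1w_i^{-1}$, $v_i\mapsto v_i$ extends to a homomorphism $\mathcal{VB}_{n}\to(\text{reduced group})$ precisely because the redundancy check above verifies that every defining relation of $\mathcal{VB}_{n}$ holds there. Since each composition fixes the generators (using $w_i\sigma_1w_i^{-1}=\sigma_i$ in $\mathcal{VB}_{n}$), both maps are isomorphisms, which is the claim.
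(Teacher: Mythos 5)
The paper gives no proof of this statement---it is imported verbatim from Kauffman--Lambropoulou---so the only in-paper comparison is with the proof of the analogous reduced presentation for $\mathcal{TB}_{n}$ (Theorem \ref{twisted braid}), and your proposal follows exactly that template: eliminate the higher-index generators via the inductive conjugation formula $(\dag)$, keep only the index-one instance of each relation family, and recover the general instances by sliding virtual generators through words using the symmetric-group relations. Your bookkeeping of which substituted relations become tautologies and which displayed relations encode $\sigma_{1}\sigma_{2}\sigma_{1}=\sigma_{2}\sigma_{1}\sigma_{2}$ (conjugated by $v_{1}$) and $\sigma_{1}\sigma_{3}=\sigma_{3}\sigma_{1}$ is accurate, and the final two-homomorphisms argument is the right way to package the Tietze moves. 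The only shortfall is that the redundancy check you correctly single out as the heart of the matter is sketched rather than executed; in the paper's parallel proof for $\mathcal{TB}_{n}$ this is precisely the content of the long displayed word computations, so your proposal is a correct and well-aimed plan whose main computational step remains to be written out.
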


Below we give a reduced presentation for $\mathcal{TB}_{n}$ with generators
\begin{center}
$\{\sigma_{1}, b_{1}, v_{1}, \cdots, v_{n-1}\}.$
\end{center}

In\cite{Kauffman2004}, Kauffman and Lambropoulou introduced following the defining relations:
$$ \sigma_{i+1}:=(v_{i} \cdots v_{2}v_{1})(v_{i+1}\cdots v_{3}v_{2})\sigma_{1}(v_{2}v_{3}\cdots v_{i+1})(v_{1} v_{2}\cdots v_{i}), \ \ i=1, \cdots, n-2. \eqno{(\dag)}$$

According to mixed relations $b_iv_{i}=v_{i}b_{i+1}$, we assume the defining relations:
$$ b_{i+1}:=(v_{i}v_{i-1}\cdots v_1)b_1(v_1v_{2}\cdots v_{i}),\ \ i=1, \cdots, n-1. \eqno{(*)}$$

In the proof of Theorem \ref{twisted braid} below we use the following virtual braid relations, which are simple inferences of the virtual relations $v_iv_j=v_jv_i,~|i-j|\geq 2$ and $v_iv_{i+1}v_i=v_{i+1}v_iv_{i+1},~1 \leq i \leq n-2$:
$$ v_{i}v_{i-1} \cdots v_{2}v_{1}v_{2} \cdots v_{i-1}v_{i}=v_{1}v_{2} \cdots v_{i-1}v_{i}v_{i-1} \cdots v_{2}v_{1}, \ \ i=1, \cdots, n. \eqno{(\ddag)}$$

To give a reduced presentation for $\mathcal{TB}_{n}$, we consider the relations involving $b_{i}$.

\begin{itemize}
   \item For the mixed relations $b_ib_{i+1}\sigma_{i}b_{i+1}b_i=v_{i}\sigma_{i}v_i, \ 1 \leq i \leq n-1$ (or $b_{i+1}b_i\sigma_{i}b_i b_{i+1}=v_{i}\sigma_{i}v_i, \ 1 \leq i \leq n-1$), we only need to keep $b_2b_{1}\sigma_{1}b_{1}b_2=v_{1}\sigma_{1}v_1$, i.e. $(v_{1}b_{1}v_{1})b_1\sigma_{1}b_1(v_{1}b_{1}v_{1})=v_{1}\sigma_{1}v_1$.
   \item For the twisted relations $b_i^{2}=1, \ 1 \leq i \leq n $, we only need to keep $b_{1}^{2}=1$.

   \item For the twisted relations $b_ib_{j}=b_{j}b_i, \ i  \neq j$, we only need to keep $b_{1}b_{2}=b_{2}b_{1}$, i.e. $b_{1}(v_{1}b_{1}v_{1})=(v_{1}b_{1}v_{1})b_{1}$.

   \item For the mixed relations $b_{i}v_{j}=v_{j}b_{i}, \ j>i \ or \ j<i-1$, we only need to keep $b_{1}v_{j}=v_{j}b_{1}, \ j>1$.

   \item For the mixed relations $b_{i}\sigma_{j}=\sigma_{j}b_{i}, \ j>i \ or \  j<i-1 $, we only need to keep $b_{1}(v_{1}v_{2}\sigma_{1}v_{2}v_{1})=(v_{1}v_{2}\sigma_{1}v_{2}v_{1})b_{1}$, respectively.

\end{itemize}

Note that the relation
$$ b_{1}(v_{1}v_{2}\sigma_{1}v_{2}v_{1})=(v_{1}v_{2}\sigma_{1}v_{2}v_{1})b_{1}$$

 is equivalent to the relation
 $$\sigma_{1}(v_{2}v_{1}b_{1}v_{1}v_{2})=(v_{2}v_{1}b_{1}v_{1}v_{2})\sigma_{1}.$$

 Then, we have:

\begin{thm} \label{twisted braid}
The twisted braid group $\mathcal{TB}_{n}$ has the following reduced
presentation:
\begin{equation*}\label{3}
\begin{split}
  \mathcal{TB}_{n}=\langle \ \sigma_{1}, b_{1}, v_{1}, \cdots, v_{n-1} \ | \  &v_{i}v_{i+1}v_{i}=v_{i+1}v_{i}v_{i+1}, \ v_{i}v_{j}=v_{j}v_{i}, \  |i-j|\geq 2 ,\\
  &v_{i}^{2}=1, \ 1 \leq i \leq n-1, \ \sigma_{1}v_{j}=v_{j}\sigma_{1}, \  j>2 ,\\
  &b_{1}^{2}=1, \ b_{1}v_{j}=v_{j}b_{1}, \ j>1\  \\
              &(v_{1}\sigma_{1}v_{1})(v_{2}\sigma_{1}v_{2})(v_{1}\sigma_{1}v_{1})=(v_{2}\sigma_{1}v_{2})(v_{1}\sigma_{1}v_{1})(v_{2}\sigma_{1}v_{2}),\\
                      &\sigma_{1}(v_{2}v_{3}v_{1}v_{2}\sigma_{1}v_{2}v_{1}v_{3}v_{2})=(v_{2}v_{3}v_{1}v_{2}\sigma_{1}v_{2}v_{1}v_{3}v_{2})\sigma_{1},\\
             & b_{1}(v_{1}b_{1}v_{1})=(v_{1}b_{1}v_{1})b_{1},\\
              &\sigma_{1}(v_{2}v_{1}b_{1}v_{1}v_{2})=(v_{2}v_{1}b_{1}v_{1}v_{2})\sigma_{1}, \\
              &(v_{1}b_{1}v_{1})b_1\sigma_{1}b_1(v_{1}b_{1}v_{1})=v_{1}\sigma_{1}v_1 \ \rangle.\\
\end{split}
 \end{equation*}

\end{thm}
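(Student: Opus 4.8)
The plan is to follow the standard Tietze-transformation strategy used by Kauffman and Lambropoulou for the analogous virtual statement (Theorem \ref{reducedpre}), starting from the full presentation of $\mathcal{TB}_{n}$ with generators $\{\sigma_i, v_i, b_i\}$ given above. The key idea is that the defining relations $(\dag)$ and $(*)$ let us eliminate all generators except $\sigma_1$, $b_1$, and $v_1,\dots,v_{n-1}$: each $\sigma_{i+1}$ and each $b_{i+1}$ is rewritten as a word in the retained generators. So the proof splits into two tasks. First, show that the listed relations are consequences of the full set of relations under the substitutions $(\dag)$, $(*)$ — this is essentially the ``easy'' direction, since each listed relation is a specialization (the $i=1$ case, roughly) of a relation already present. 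Second, and this is the substantive direction, show conversely that \emph{every} relation of the full presentation — all the braiding, virtual, twisted, and mixed relations for all indices $i$ — can be derived from the reduced set together with $(\dag)$, $(\ddag)$, and $(*)$.

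For the second direction I would organize the derivation by relation family, handling the purely virtual relations first (these are already complete in the reduced presentation, so nothing to do), then the mixed $\sigma$–$v$ relations and the braid relations among the $\sigma_i$, which is exactly the content of Theorem \ref{reducedpre} and can be quoted essentially verbatim: the subgroup generated by $\sigma_1$ and the $v_i$ already has the desired reduced presentation inside $\mathcal{VB}_n$. The new work concerns the relations involving $b_i$. Using $(*)$ one conjugates the single retained relation of each $b$-family by the words $w_i := v_{i-1}\cdots v_1$ (and its reverse), and uses $(\ddag)$ together with $v_i^2=1$ to push the conjugating letters through. Concretely: from $b_1^2=1$ and $(*)$ one gets $b_{i+1}^2 = w_i b_1^2 w_i^{-1}=1$ for all $i$; from $b_1(v_1b_1v_1)=(v_1b_1v_1)b_1$ (i.e. $b_1 b_2 = b_2 b_1$) plus conjugation one gets $b_i b_{i+1}=b_{i+1}b_i$, and combined with the far-commutation $b_1 v_j = v_j b_1$ (whence $b_1$ commutes with $b_j$ for $j\geq 3$, hence general $b_i b_j = b_j b_i$); from $b_1 v_j = v_j b_1$ ($j>1$) one recovers $b_i v_j = v_j b_i$ for $j>i$ or $j<i-1$; the relation $b_i v_i = v_i b_{i+1}$ is then \emph{built into} the definition $(*)$ itself. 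The genuinely delicate derivations are the mixed relation $b_i\sigma_j=\sigma_j b_i$ for $|i-j|\geq 2$ or $j<i-1$ — derived from the retained $b_1(v_1v_2\sigma_1 v_2 v_1)=(v_1v_2\sigma_1v_2v_1)b_1$ via $(\dag)$ and careful index bookkeeping — and the key mixed relation $b_ib_{i+1}\sigma_i b_{i+1}b_i = v_i\sigma_i v_i$, which must be obtained from the single case $b_2 b_1\sigma_1 b_1 b_2 = v_1\sigma_1 v_1$ by conjugating with the appropriate virtual word and simplifying both sides using $(\dag)$, $(*)$, $(\ddag)$, $v_i^2=1$, and the braid relations.

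I expect the main obstacle to be exactly this last step: verifying that conjugating $b_2 b_1\sigma_1 b_1 b_2 = v_1\sigma_1 v_1$ by $w_i$ reproduces $b_{i+1}b_i\sigma_i b_i b_{i+1} = v_i\sigma_i v_i$. The difficulty is that $\sigma_i$ expands via $(\dag)$ into a long alternating word in $\sigma_1$ and the $v_k$, while $b_i, b_{i+1}$ expand via $(*)$, so one must check that the conjugating letters interleave correctly — this requires the identity $(\ddag)$ precisely to reconcile the ``left'' form $v_iv_{i-1}\cdots v_1$ appearing in $(*)$ with the ``right'' form $v_1\cdots v_i$ appearing after moving letters past $\sigma_1$. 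I would isolate this as a lemma: that under $(\dag)$, $(*)$, $(\ddag)$ and the virtual relations, conjugation by $v_{i-1}\cdots v_1$ sends $\sigma_1\mapsto\sigma_i$, $b_1\mapsto b_i$, $b_2\mapsto b_{i+1}$, and $v_1\mapsto v_i$ simultaneously — once that compatibility lemma is in hand, every $b$-relation for general index follows by applying it to the corresponding retained relation, and the proof is complete by combining with Theorem \ref{reducedpre} for the $\sigma$–$v$ part. The remaining bookkeeping (checking the ``only the $i=1$ case survives'' claims in the itemized list, and confirming the stated equivalences of relations) is routine and can be left to the reader or dispatched in a sentence each.
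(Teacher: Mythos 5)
Your overall strategy coincides with the paper's: retain only the $i=1$ instance of each relation family, treat $(\dag)$ and $(*)$ as definitions of $\sigma_{i+1}$ and $b_{i+1}$, quote Theorem \ref{reducedpre} for everything involving only $\sigma$ and $v$, and recover each general-index relation involving $b_i$ from its retained case using $(\ddag)$, $v_i^2=1$ and the far-commutations. The paper carries out exactly this plan in seven explicit families of computations, so the architecture of your argument is the right one.

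There is, however, a concrete error in the device you propose for the hard steps, and as stated that step would fail. You claim that conjugation by $w_i:=v_{i-1}\cdots v_1$ sends $\sigma_1\mapsto\sigma_i$, $b_1\mapsto b_i$, $b_2\mapsto b_{i+1}$ and $v_1\mapsto v_i$ simultaneously. Only the second of these is true (it is literally $(*)$). Already for $i=2$ one has $w_2=v_1$ and $v_1v_1v_1=v_1\neq v_2$, \ $v_1b_2v_1=v_1(v_1b_1v_1)v_1=b_1\neq b_3$, and $v_1\sigma_1v_1\neq\sigma_2$, since $(\dag)$ gives $\sigma_2=v_1v_2\sigma_1v_2v_1$. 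Consequently, conjugating the retained relation $b_2b_1\sigma_1b_1b_2=v_1\sigma_1v_1$ by $w_i$ does not produce $b_{i+1}b_i\sigma_ib_ib_{i+1}=v_i\sigma_iv_i$, and conjugating $b_1b_2=b_2b_1$ by $w_i$ merely returns the same relation rather than $b_ib_{i+1}=b_{i+1}b_i$. The lemma you want is true for the longer conjugator $u_i:=(v_{i-1}\cdots v_1)(v_i\cdots v_2)$ coming from $(\dag)$: conjugation by $u_i$ does send $\sigma_1\mapsto\sigma_i$, $v_1\mapsto v_i$, $b_1\mapsto b_i$ and $b_2\mapsto b_{i+1}$ (the last two using $b_1v_j=v_jb_1$ for $j>1$, $(\ddag)$ and $v_ib_{i+1}=b_iv_i$), and with that correction your plan goes through. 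The paper sidesteps the issue by never conjugating a whole relation at once: for the key relation it expands both sides via $(*)$, $(\dag)$ and $(\ddag)$, walks the $b_1$'s inward letter by letter using $v_ib_{i+1}=b_iv_i$ and the far-commutations, applies the retained relation $(v_1b_1v_1)b_1\sigma_1b_1(v_1b_1v_1)=v_1\sigma_1v_1$ at the core, and reassembles; for $b_ib_j=b_jb_i$ it first proves $b_1b_j=b_jb_1$ for \emph{all} $j>1$ and only then conjugates by $w_i$, which is legitimate there because every letter of $w_i$ commutes with $b_j$ when $j>i$. So: same approach as the paper, but the conjugating word must be corrected (or replaced by the paper's letter-pushing computation) before the step deriving $b_ib_{i+1}\sigma_ib_{i+1}b_i=v_i\sigma_iv_i$ for $i>1$ can actually be completed.
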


\begin{proof}
{\rm \textcircled{1}} \ The braiding relations and mixed relations:
\begin{equation*}\label{4}
\begin{split}
  &\sigma_{i}v_{j}=v_{j}\sigma_{i}, \ \ |i-j|\geq 2, \\
  &\sigma_{i}\sigma_{i+1}\sigma_{i}=\sigma_{i+1}\sigma_{i}\sigma_{i+1}, \  \ 1 \leq i \leq n-2,\\
  &\sigma_{i}\sigma_{j}=\sigma_{j}\sigma_{i}, \  \ |i-j|\geq 2,  \\
  &v_i\sigma_{i+1}v_i=v_{i+1}\sigma_iv_{i+1}, \  \ 1 \leq i \leq n-2, \\
  \end{split}
\end{equation*}

follow from the Theorem \ref{reducedpre}.

{\rm \textcircled{2}} \ The mixed relations $v_{i}b_{i+1}=b_{i}v_{i}$ for $i \geq 1$ follow from the defining relations $(*)$ and virtual relations $v_{i}v_{i}=1_{n}, \ 1 \leq i \leq n-1$.
\begin{equation*}\label{5}
\begin{split}
  v_{i}b_{i+1} &\overset{(*)}=\underline{v_{i}}(v_{i} \cdots v_{1})b_{1}(v_{1} \cdots v_{i}) \\
               &=\underline{(v_{i-1} \cdots v_{1})b_{1}(v_{1} \cdots v_{i-1})}v_{i} \\
               &\overset{(*)}=b_{i}v_{i}. \\
  \end{split}
\end{equation*}

{\rm \textcircled{3}} \ The mixed relations $b_{i}v_{j}=v_{j}b_{i}$ for $ j>i$ or $j<i-1$ follow from the defining relations $(*)$, virtual relations $v_{i}v_{i+1}v_{i}=v_{i+1}v_{i}v_{i+1}, \ v_{i}v_{j}=v_{j}v_{i}, \  |i-j|\geq 2$ and mixed relations $ b_{1}v_{j}=v_{j}b_{1}, \ j>1$.

If $j>i$, we have:
\begin{equation*}\label{6}
\begin{split}
  b_{i}v_{j} &\overset{(*)}=(v_{i-1}\cdots v_{1})b_{1}(v_{1}\cdots v_{i-1})\underline{v_{j}} \\
             &=v_{j}\underline{(v_{i-1}\cdots v_{1})b_{1}(v_{1}\cdots v_{i-1})} \\
             &\overset{(*)}=v_{j}b_{i}.
  \end{split}
\end{equation*}

If $j < i-1$, we have:
\begin{equation*}\label{7}
\begin{split}
  b_{i}v_{j} &\overset{(*)}=(v_{i-1}\cdots v_{1})b_{1}(v_{1}\cdots v_{i-1})\underline{v_{j}} \\
             &=(v_{i-1}\cdots v_{1})b_{1}(v_{1} \cdots v_{j-1}\underline{v_{j}v_{j+1}v_{j}}v_{j+2} \cdots v_{i-1}) \\
             &=(v_{i-1}\cdots v_{1})b_{1}(v_{1} \cdots v_{j-1}\underline{v_{j+1}}v_{j}v_{j+1}v_{j+2} \cdots v_{i-1}) \\
             &=(v_{i-1}\cdots v_{j+2}\underline{v_{j+1}v_{j}v_{j+1}}v_{j-1}\cdots v_{1})b_{1}(v_{1} \cdots v_{j-1}v_{j}v_{j+1}v_{j+2} \cdots v_{i-1}) \\
             &=(v_{i-1}\cdots v_{j+2}\underline{v_{j}}v_{j+1}v_{j}v_{j-1}\cdots v_{1})b_{1}(v_{1} \cdots v_{j-1}v_{j}v_{j+1}v_{j+2} \cdots v_{i-1}) \\
             &=v_{j}\underline{(v_{i-1}\cdots v_{j+2}v_{j+1}v_{j}v_{j-1}\cdots v_{1})b_{1}(v_{1} \cdots v_{j-1}v_{j}v_{j+1}v_{j+2} \cdots v_{i-1})} \\
             &\overset{(*)}=v_{j}b_{i}.
  \end{split}
\end{equation*}

{\rm \textcircled{4}} \ The mixed relations $b_{i}b_{i+1}\sigma_{i}b_{i+1}b_{i}=v_{i}\sigma_{i}v_i$ for $i>1$ follow from the above defining relations $(*)$, $(\dag)$ and $(\ddag)$, the reduced relation $(v_{1}b_{1}v_{1})b_1\sigma_{1}b_1(v_{1}b_{1}v_{1})=v_{1}\sigma_{1}v_1$, the virtual relations $v_{i}v_{i}=1_{n}$ and mixed relations $b_{i}v_{i}=v_{i}b_{i+1}$ for $i \geq 1$ in $\textcircled{2}$ and $b_{i}v_{j}=v_{j}b_{i}$ for $ j>i$ or $j<i-1$ in $\textcircled{3}$.

\begin{equation*}\label{2}
\begin{split}
  b_ib_{i+1}\sigma_{i}b_{i+1}b_i&\overset{(*)(\dag)}=\underline{b_{i}(v_{i}} \cdots v_{1})b_{1}\underline{(v_{1} \cdots v_{i})(v_{i-1} \cdots v_{1})}(v_{i} \cdots v_{3}v_{2})\sigma_{1}(v_{2}v_{3} \cdots v_{i}) \\& \underline{(v_{1} \cdots v_{i-1})(v_{i}\cdots v_{1})}b_{1}(v_{1} \cdots \underline{v_{i})b_{i}} \\
           &\overset{(\ddag)\textcircled{2}}=(v_{i}\underline{b_{i+1}} \cdots v_{1})b_{1}(v_{i} \cdots v_{2}v_{1}\underline{v_{2} \cdots v_{i})(v_{i} \cdots v_{3}v_{2})}\sigma_{1} \\& \underline{(v_{2}v_{3} \cdots v_{i})(v_{i} \cdots v_{2}}v_{1}v_{2} \cdots v_{i})b_{1}(v_{1} \cdots \underline{b_{i+1}}v_{i}) \\
           &\overset{\textcircled{3}}=(v_{i}\cdots v_{1})b_{1}\underline{b_{i+1} (v_{i}} \cdots v_{2}v_{1})\sigma_{1}(v_{1}v_{2} \cdots \underline{v_{i})b_{i+1}}b_{1}(v_{1} \cdots v_{i}) \\
           &\overset{\textcircled{2}}=(v_{i}\cdots v_{1})b_{1} (v_{i}\underline{b_{i}v_{i-1}} \cdots v_{2}v_{1})\sigma_{1}(v_{1}v_{2} \cdots \underline{v_{i-1}b_{i}}v_{i})b_{1}(v_{1} \cdots v_{i}) \\
           &\overset{\textcircled{2}}= \cdots \\
           &\overset{\textcircled{2}}=(v_{i}\cdots v_{1})\underline{b_{1}}(v_{i}v_{i-1} \cdots v_{2}v_{1})b_{1}\sigma_{1}b_{1}(v_{1}v_{2} \cdots v_{i-1}v_{i})\underline{b_{1}}(v_{1} \cdots v_{i}) \\
           &\overset{\textcircled{3}}=(v_{i}\cdots v_{1})(v_{i}v_{i-1} \cdots v_{2}b_{1}\underline{v_{1})b_{1}}\sigma_{1}\underline{b_{1}(v_{1}}b_{1}v_{2} \cdots v_{i-1}v_{i})(v_{1} \cdots v_{i}) \\
            &\overset{\textcircled{2}}=(v_{i}\cdots v_{1})(v_{i}v_{i-1} \cdots v_{2}b_{1}b_{2}\underline{v_{1})\sigma_{1}(v_{1}}b_{2}b_{1}v_{2} \cdots v_{i-1}v_{i})(v_{1} \cdots v_{i}) \\
             &=(v_{i}\cdots v_{1})(v_{i}v_{i-1} \cdots v_{2}b_{1}b_{2}\underline{(v_{1}b_{1}v_{1})}b_1\sigma_{1}b_1\underline{(v_{1}b_{1}v_{1})}b_{2}b_{1}v_{2} \cdots v_{i-1}v_{i})(v_{1} \cdots v_{i}) \\
            &\overset{(*)}=(v_{i}\cdots v_{1})(v_{i}v_{i-1} \cdots v_{2}\underline{b_{1}b_{2}b_{2}b_1}\sigma_{1}\underline{b_1 b_{2}b_{2}b_{1}}v_{2} \cdots v_{i-1}v_{i})(v_{1} \cdots v_{i}) \\
             &=(v_{i}\underline{v_{i-1}\cdots v_{1})(v_{i}v_{i-1} \cdots v_{2})\sigma_{1}(v_{2}\cdots v_{i-1}v_{i})(v_{1} \cdots v_{i-1}}v_{i}) \\
             &\overset{(\dag)}=v_{i}\sigma_{i}v_{i}. \\
  \end{split}
\end{equation*}

{\rm \textcircled{5}} \ The mixed relations $b_{i}\sigma_{j}=\sigma_{j}b_{i}$ for $ j>i$ or $j<i-1$ follow from the defining relations $(*)$ and $(\dag)$, mixed relations $\sigma_{i}v_{j}=v_{j}\sigma_{i}$ for $ |i-j|\geq 2 $, the reduced relation $\sigma_{1}(v_{2}v_{1}b_{1}v_{1}v_{2})=(v_{2}v_{1}b_{1}v_{1}v_{2})\sigma_{1}$, mixed relations $v_{i}b_{i+1}=b_{i}v_{i}$ for $i \geq 1$ in $\textcircled{2}$ and $b_{i}v_{j}=v_{j}b_{i}$ for $j>i$ or $j<i-1$ in $\textcircled{3}$.

If $j > i $, first we have:
\begin{equation*}\label{8}
\begin{split}
  b_{1}\sigma_{j} &\overset{(\dag)}=\underline{b_{1}}(v_{j-1} \cdots v_{2}v_{1})(v_{j}\cdots v_{3}v_{2})\sigma_{1}(v_{2}v_{3}\cdots v_{j})(v_{1}v_{2} \cdots v_{j-1}) \\
                  &\overset{\textcircled{3}}=(v_{j-1} \cdots v_{2}\underline{b_{1}v_{1}})(v_{j}\cdots v_{3}v_{2})\sigma_{1}(v_{2}v_{3}\cdots v_{j})(v_{1}v_{2} \cdots v_{j-1}) \\
                  &\overset{\textcircled{2}}=(v_{j-1} \cdots v_{2}v_{1}\underline{b_{2}})(v_{j}\cdots v_{3}v_{2})\sigma_{1}(v_{2}v_{3}\cdots v_{j})(v_{1}v_{2} \cdots v_{j-1}) \\
                   &\overset{\textcircled{3}}=(v_{j-1} \cdots v_{2}v_{1})(v_{j}\cdots v_{3}\underline{b_{2}v_{2}})\sigma_{1}(v_{2}v_{3}\cdots v_{j})(v_{1}v_{2} \cdots v_{j-1})\\
                   &\overset{\textcircled{2}}=(v_{j-1} \cdots v_{2}v_{1})(v_{j}\cdots v_{3}
                   v_{2})\underline{b_{3}}\sigma_{1}(v_{2}v_{3}\cdots v_{j})(v_{1}v_{2} \cdots v_{j-1}) \\
                   &\overset{(*)}=(v_{j-1} \cdots v_{2}v_{1})(v_{j}\cdots v_{3}
                   v_{2})\underline{(v_{2}v_{1}b_{1}v_{1}v_{2})\sigma_{1}}(v_{2}v_{3}\cdots v_{j})(v_{1}v_{2} \cdots v_{j-1}) \\
                   &=(v_{j-1} \cdots v_{2}v_{1})(v_{j}\cdots v_{3}
                   v_{2})\sigma_{1}\underline{(v_{2}v_{1}b_{1}v_{1}v_{2})}(v_{2}v_{3}\cdots v_{j})(v_{1}v_{2} \cdots v_{j-1}) \\
                    &\overset{(*)}=(v_{j-1} \cdots v_{2}v_{1})(v_{j}\cdots v_{3}
                   v_{2})\sigma_{1}\underline{b_{3}(v_{2}}v_{3}\cdots v_{j})(v_{1}v_{2} \cdots v_{j-1}) \\
                   &\overset{\textcircled{2}}=(v_{j-1} \cdots v_{2}v_{1})(v_{j}\cdots v_{3}
                   v_{2})\sigma_{1}(v_{2}\underline{b_{2}}v_{3}\cdots v_{j})(v_{1}v_{2} \cdots v_{j-1}) \\
                    &\overset{\textcircled{3}}=(v_{j-1} \cdots v_{2}v_{1})(v_{j}\cdots v_{3}
                   v_{2})\sigma_{1}(v_{2}v_{3}\cdots v_{j})(\underline{b_{2}v_{1}}v_{2} \cdots v_{j-1}) \\
  \end{split}
\end{equation*}
\begin{equation*}
\begin{split}
                   &\overset{\textcircled{2}}=(v_{j-1} \cdots v_{2}v_{1})(v_{j}\cdots v_{3}
                   v_{2})\sigma_{1}(v_{2}v_{3}\cdots v_{j})(v_{1}\underline{b_{1}v_{2}} \cdots v_{j-1}) \\
                   &\overset{\textcircled{3}}=\underline{(v_{j-1} \cdots v_{2}v_{1})(v_{j}\cdots v_{3}
                   v_{2})\sigma_{1}(v_{2}v_{3}\cdots v_{j})(v_{1}v_{2}v_{3} \cdots v_{j-1})}b_{1} \\
                   &\overset{(\dag)}=\sigma_{j}b_{1}. \\
  \end{split}
\end{equation*}

 Then
\begin{equation*} \label{9}
\begin{split}
  b_{i}\sigma_{j} &\overset{(*)}=(v_{i-1} \cdots v_{1})b_{1}(v_{1} \cdots v_{i-1})\underline{\sigma_{j}} \\
                  &=\sigma_{j}\underline{(v_{i-1} \cdots v_{1})b_{1}(v_{1} \cdots v_{i-1})} \\
                  &\overset{(*)}=\sigma_{j}b_{i}. \\
  \end{split}
\end{equation*}

If $j < i-1$, first we have:
\begin{equation*} \label{10}
\begin{split}
\sigma_{1} b_{i} &\overset{(*)}=\underline{\sigma_{1}}(v_{i-1} \cdots v_{1})b_{1}(v_{1} \cdots v_{i-1}) \\
                 &=(v_{i-1} \cdots \underline{\sigma_{1}v_{2}v_{1})b_{1}(v_{1}v_{2}}\cdots v_{i-1}) \\
                 &=(v_{i-1} \cdots v_{2}v_{1})b_{1}(v_{1}v_{2}\underline{\sigma_{1}}\cdots v_{i-1}) \\
                 &=\underline{(v_{i-1} \cdots v_{2}v_{1})b_{1}(v_{1}v_{2}\cdots v_{i-1})}\sigma_{1} \\
                 &\overset{(*)}=b_{i}\sigma_{1}. \\
  \end{split}
\end{equation*}

Then
\begin{equation*} \label{11}
\begin{split}
  b_{i}\sigma_{j} &\overset{(\dag)}=\underline{b_{i}}(v_{j-1} \cdots v_{2}v_{1})(v_{j}\cdots v_{3}v_{2})\sigma_{1}(v_{2}v_{3}\cdots v_{j})(v_{1}v_{2} \cdots v_{j-1}) \\
                  &\overset{\textcircled{3}}=\underline{(v_{j-1} \cdots v_{2}v_{1})(v_{j}\cdots v_{3}v_{2})\sigma_{1}(v_{2}v_{3}\cdots v_{j})(v_{1}v_{2} \cdots v_{j-1})}b_{i} \\
                  &\overset{(\dag)}=\sigma_{j}b_{i}. \\
  \end{split}
\end{equation*}

{\rm \textcircled{6}} \ The twisted relations $b_{i}b_{j}=b_{j}b_{i}$ for $i \neq j$ follow from the defining relations $(*)$, the reduced relation $ b_{1}(v_{1}b_{1}v_{1})=(v_{1}b_{1}v_{1})b_{1}$, mixed relations $b_{i}v_{j}=v_{j}b_{i}$ for $j>i$ or $j<i-1$ in $\textcircled{3}$.

Without loss of generality, we assume $j>i$. First we have:
\begin{equation*} \label{12}
\begin{split}
  b_{1}b_{j} &\overset{(*)}=\underline{b_{1}}(v_{j-1} \cdots v_{1}b_{1}v_{1} \cdots v_{j-1}) \\
             &\overset{\textcircled{3}}=(v_{j-1} \cdots v_{2})\underline{b_{1}v_{1}b_{1}(v_{1}} \cdots v_{j-1}) \\
             &=(v_{j-1} \cdots v_{2})v_{1}b_{1}v_{1}\underline{b_{1}}(v_{2} \cdots v_{j-1}) \\
               &\overset{\textcircled{3}}=\underline{(v_{j-1} \cdots v_{2})v_{1}b_{1}v_{1}(v_{2} \cdots v_{j-1})}b_{1} \\
               &\overset{(*)}=b_{j}b_{1}. \\
  \end{split}
\end{equation*}

Then
\begin{equation*} \label{13}
\begin{split}
  b_{i}b_{j} &\overset{(*)}=(v_{i-1} \cdots v_{1}b_{1}v_{1} \cdots v_{i-1})\underline{b_{j}} \\
             &\overset{\textcircled{3}}=b_{j}\underline{(v_{i-1} \cdots v_{1}b_{1}v_{1} \cdots v_{i-1})} \\
             &\overset{(*)}=b_{j}b_{i}. \\
  \end{split}
\end{equation*}

{\rm \textcircled{7}} \ The twisted relations $b_{i}^{2}=1$ for $i>1$ follow from the defining relations $(*)$, the virtual relations $v_{i}v_{i}=1_{n}$ and the reduced relation $ b_{1}^{2}=1$.
\begin{equation*} \label{14}
\begin{split}
  b_{i}^{2} &\overset{(*)}=(v_{i-1} \cdots v_{1}b_{1}\underline{v_{1} \cdots v_{i-1})(v_{i-1} \cdots v_{1}}b_{1}v_{1} \cdots v_{i-1}) \\
             &=(v_{i-1} \cdots v_{1}\underline{b_{1}b_{1}}v_{1} \cdots v_{i-1}) \\
             &=\underline{(v_{i-1} \cdots v_{1}v_{1} \cdots v_{i-1})} \\
             &=1. \\
  \end{split}
\end{equation*}
\end{proof}

\subsection{A reduced presentation for the flat twisted braid group}

The set of flat virtual braids on $n$ strands forms a group, the \emph{flat virtual braid group}, denoted
$\mathcal{FV}_n$.
Similarly, the set of flat twisted braids on $n$ strands forms a group, \emph{the flat twisted braid group} denoted $\mathcal{FT}_{n}$, which is generated by flat generators $c_i$, virtual generators $v_{i}$ and twisted generators $b_{i}$ (see Figure \ref{T2}).

\begin{figure}[!htbp]
  \centering
   \subfigure[$c_{i}$]{
  \includegraphics[width=0.28\textwidth]{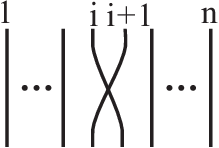}}  ~~~
   \subfigure[$v_{i}$]{
  \includegraphics[width=0.28\textwidth]{C2.eps}}  ~~~
   \subfigure[$b_{i}$]{
  \includegraphics[width=0.28\textwidth]{C3.eps}}\\
  \caption{Generators $c_i$, $v_i$ and $b_i$ of flat twisted braid $\mathcal{FT}_{n}$}\label{T2}
\end{figure}

Among themselves the generators not only satisfy virtual relations, twisted relations and mixed relations that involve $b_{i}$ and $v_{i}$, but also satisfy the following relations:

flat relations:
\begin{equation*}
\begin{split}
&c_{i}^{2}=1, \ \ 1 \leq i \leq n-1,\\
&c_{i}c_{i+1}c_{i}=c_{i+1}c_{i}c_{i+1}, \  \ 1 \leq i \leq n-2,\\
&c_{i}c_{j}=c_{j}c_{i}, \   \ |i-j|\geq 2, \\
 \end{split}
\end{equation*}

mixed flat relations:
\begin{equation*}
\begin{split}
 &c_{i}v_j=v_jc_{i}, \  \ |i-j|\geq 2,\\
 &v_{i}c_{i+1}v_i=v_{i+1}c_{i}v_{i+1},  \ \ 1 \leq i \leq n-2,\\
 &b_ib_{i+1}c_{i}b_{i+1}b_i=v_{i}c_{i}v_i, \  \ 1 \leq i \leq n-1,\\
 &b_{i}c_{j}=c_{j}b_{i}, \  \ j>i \ or \ j<i-1. \\
 \end{split}
\end{equation*}

As for the twisted braids, we have for the flat crossings the inductive defining relations $c_{i+1}=v_{i}v_{i+1}c_{i}v_{i+1}v_{i}$, which leads to the defining relations:
$$ b_{i+1}:=(v_{i}v_{i-1}\cdots v_1)b_1(v_1v_{2}\cdots v_{i}),\ \ i=1, \cdots, n-1, \eqno{(*)}$$
$$ c_{i+1}:=(v_{i} \cdots v_{2}v_{1})(v_{i+1}\cdots v_{3}v_{2})c_{1}(v_{2}v_{3}\cdots v_{i+1})(v_{1} v_{2}\cdots v_{i}), \ \ i=1, \cdots, n-2. \eqno{(\S)}$$

For the \emph{flat virtual braid group} $\mathcal{FV}_n$, Kauffman and Lambropoulou \cite{Kauffman2004} gave following reduced presentation for $\mathcal{FV}_{n}$.

\begin{thm}(\cite{Kauffman2004}, Theorem 3)\label{AFVB}
The flat virtual braid group $\mathcal{FV}_{n}$ has the following reduced
presentation:
\begin{equation*}\label{1}
\begin{split}
  \mathcal{FV}_{n}=\langle \ c_{1}, v_{1}, \cdots, v_{n-1} \ | \ &v_{i}v_{i+1}v_{i}=v_{i+1}v_{i}v_{i+1}, \ v_{i}v_{j}=v_{j}v_{i}, \ |i-j|\geq 2 ,\\
  &v_{i}^{2}=1, \ 1 \leq i \leq n-1, \ c_{1}v_{j}=v_{j}c_{1}, \ j>2, \  c_{1}^{2}=1, \\
  &(v_{1}c_{1}v_{1})(v_{2}c_{1}v_{2})(v_{1}c_{1}v_{1})=(v_{2}c_{1}v_{2})(v_{1}c_{1}v_{1})(v_{2}c_{1}v_{2}), \\
 &c_{1}(v_{2}v_{3}v_{1}v_{2}c_{1}v_{2}v_{1}v_{3}v_{2})=(v_{2}v_{3}v_{1}v_{2}c_{1}v_{2}v_{1}v_{3}v_{2})c_{1}  \ \rangle.
  \end{split}
\end{equation*}
\end{thm}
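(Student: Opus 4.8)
This is precisely the flat-crossing analogue of Theorem~\ref{reducedpre}: the reduced presentation of $\mathcal{FV}_n$ is what one gets from the reduced presentation of $\mathcal{VB}_n$ by replacing the braid generator $\sigma_1$ with the flat generator $c_1$ and adjoining the single extra relation $c_1^2=1$. So the plan is to run the Tietze-transformation argument behind Theorem~\ref{reducedpre} essentially verbatim, with the flat generator in place of the braid generator, and to take care of the additional relation $c_1^2=1$.

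First I would start from the full presentation of $\mathcal{FV}_n$ on the generators $c_1,\dots,c_{n-1},v_1,\dots,v_{n-1}$, carrying the flat relations, the virtual relations, and the mixed flat relations that involve only the $c_i$ and $v_i$. Using $v_ic_{i+1}v_i=v_{i+1}c_iv_{i+1}$ together with $v_i^2=1$ one solves $c_{i+1}=v_iv_{i+1}c_iv_{i+1}v_i$, and unwinds this recursion into the closed form $(\S)$; adjoining $(\S)$ and then deleting the generators $c_2,\dots,c_{n-1}$ is a sequence of Tietze transformations. What then has to be shown is that, after the substitution $(\S)$, every discarded relation is a consequence of the virtual relations, the four retained $c_1$-relations, and $(\S)$. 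The converse direction is immediate, since each retained relation is among the original ones once rewritten by $(\S)$: for instance $c_1(v_2v_3v_1v_2c_1v_2v_1v_3v_2)=(v_2v_3v_1v_2c_1v_2v_1v_3v_2)c_1$ is just $c_1c_3=c_3c_1$, and $(v_1c_1v_1)(v_2c_1v_2)(v_1c_1v_1)=(v_2c_1v_2)(v_1c_1v_1)(v_2c_1v_2)$ becomes $c_1c_2c_1=c_2c_1c_2$ after conjugating by $v_1$ and using the virtual relations.

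The forward direction is then checked family by family, exactly as in items \textcircled{1}--\textcircled{7} of the proof of Theorem~\ref{twisted braid}, using the auxiliary virtual-braid identities of type $(\ddag)$ and the recurring ``conjugate, slide the $v$'s through, reassemble'' move: $c_i^2=1$ for $i>1$ follows by conjugating $c_1^2=1$ by the palindrome $v_{i-1}\cdots v_1$ and cancelling with $v_j^2=1$; the commutations $c_iv_j=v_jc_i$ and $c_ic_j=c_jc_i$ for $|i-j|\ge 2$ are transported down to their $c_1$-instances through $(\S)$; $v_ic_{i+1}v_i=v_{i+1}c_iv_{i+1}$ is tautological, being $(\S)$ itself; and the flat braid relation $c_ic_{i+1}c_i=c_{i+1}c_ic_{i+1}$ collapses, after substituting $(\S)$, onto its $i=1$ instance, which is the retained braid-type relation.

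The main obstacle, just as in the proof of Theorem~\ref{twisted braid}, is the flat braid relation $c_ic_{i+1}c_i=c_{i+1}c_ic_{i+1}$ for $i\ge 2$: it is not visibly a conjugate of a single retained relation, so one must expand both sides via $(\S)$, commute a long word in the $v_j$ past $c_1$ using $c_1v_j=v_jc_1$ ($j>2$) and the virtual relations, and verify that the whole expression cancels down to the $i=1$ case. This is the same bookkeeping as item \textcircled{4} in the proof of Theorem~\ref{twisted braid}; once it is carried out, the remaining families are routine.
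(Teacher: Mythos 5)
This statement is quoted verbatim from Kauffman--Lambropoulou (\cite{Kauffman2004}, Theorem 3) and the paper gives no proof of it, so there is no internal argument to compare against; your proposal is nonetheless a correct reconstruction of the standard Tietze-transformation argument, identical in strategy to the cited source and to the computations \textcircled{1}--\textcircled{7} that the paper does carry out for Theorem \ref{twisted braid}. The only caveat is that the decisive step --- reducing $c_ic_{i+1}c_i=c_{i+1}c_ic_{i+1}$ and $c_ic_j=c_jc_i$ for $i,j\ge 2$ to their retained $c_1$-instances after substituting $(\S)$ --- is described rather than executed, but you locate it correctly and the bookkeeping you outline does go through.
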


For the flat twisted braid group $\mathcal{FT}_{n}$, we have:

\begin{thm}
The flat twisted braid group $\mathcal{FT}_{n}$ has the following reduced
presentation:
\begin{equation*}\label{vrelation}
\begin{split}
  \mathcal{FT}_{n}=\langle \ c_{1}, b_{1}, v_{1}, \cdots, v_{n-1} \ | \  &v_{i}v_{i+1}v_{i}=v_{i+1}v_{i}v_{i+1}, \ v_{i}v_{j}=v_{j}v_{i}, \  |i-j|\geq 2 ,\\
  &v_{i}^{2}=1, \ 1 \leq i \leq n-1, \ c_{1}v_{j}=v_{j}c_{1}, \  j>2 ,\\
  &c_{1}^{2}=1, \ b_{1}^{2}=1, \ b_{1}v_{j}=v_{j}b_{1}, \ j>1\  \\
&(v_{1}c_{1}v_{1})(v_{2}c_{1}v_{2})(v_{1}c_{1}v_{1})=(v_{2}c_{1}v_{2})(v_{1}c_{1}v_{1})(v_{2}c_{1}v_{2}),\\
 &c_{1}(v_{2}v_{3}v_{1}v_{2}c_{1}v_{2}v_{1}v_{3}v_{2})=(v_{2}v_{3}v_{1}v_{2}c_{1}v_{2}v_{1}v_{3}v_{2})c_{1},\\
  & b_{1}(v_{1}b_{1}v_{1})=(v_{1}b_{1}v_{1})b_{1},\\
  &c_{1}(v_{2}v_{1}b_{1}v_{1}v_{2})=(v_{2}v_{1}b_{1}v_{1}v_{2})c_{1}, \\
  &(v_{1}b_{1}v_{1})b_1c_{1}b_1 (v_{1}b_{1}v_{1})=v_{1}c_{1}v_1 \ \rangle.\\
  \end{split}
\end{equation*}
\end{thm}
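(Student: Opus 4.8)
The plan is to mirror, essentially verbatim, the proof of Theorem~\ref{twisted braid}, replacing the braid generator $\sigma_1$ by the flat generator $c_1$ throughout, and to lean on Theorem~\ref{AFVB} exactly where the previous proof leaned on Theorem~\ref{reducedpre}. Concretely, I would argue in the same seven steps as before. First, the flat relations $c_ic_{i+1}c_i=c_{i+1}c_ic_{i+1}$, $c_ic_j=c_jc_i$ for $|i-j|\ge2$, and the mixed flat relations $c_iv_j=v_jc_i$ for $|i-j|\ge2$ and $v_ic_{i+1}v_i=v_{i+1}c_iv_{i+1}$ all follow from the reduced presentation of $\mathcal{FV}_n$ in Theorem~\ref{AFVB}, since these are precisely the $\mathcal{FV}_n$-relations together with the inductive definition $(\S)$; there is nothing new to check here. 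Likewise $c_1^2=1$ is imported directly from Theorem~\ref{AFVB}.

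Next I would reproduce steps \textcircled{2}--\textcircled{7} of the proof of Theorem~\ref{twisted braid}, since these involve only the generators $b_i$, $v_i$ (and, in \textcircled{4}--\textcircled{5}, the "real crossing" generator, which is now $c_1$ and enjoys the same commutation and conjugation relations we have just established). Thus: \textcircled{2} the relations $v_ib_{i+1}=b_iv_i$ follow from $(*)$ and $v_i^2=1$ by the same two-line computation; \textcircled{3} the relations $b_iv_j=v_jb_i$ for $j>i$ or $j<i-1$ follow from $(*)$, the virtual relations, and the reduced relation $b_1v_j=v_jb_1$ for $j>1$, by the same telescoping argument; \textcircled{6} $b_ib_j=b_jb_i$ for $i\ne j$ follows from $(*)$, $b_1(v_1b_1v_1)=(v_1b_1v_1)b_1$ and \textcircled{3}; \textcircled{7} $b_i^2=1$ for $i>1$ follows from $(*)$, $v_i^2=1$ and $b_1^2=1$. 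For \textcircled{4}, the relations $b_ib_{i+1}c_ib_{i+1}b_i=v_ic_iv_i$ for $i>1$ follow from $(*)$, $(\S)$, $(\ddag)$, the reduced relation $(v_1b_1v_1)b_1c_1b_1(v_1b_1v_1)=v_1c_1v_1$, the relations $v_i^2=1$, and \textcircled{2}, \textcircled{3}, by the same conjugation-and-collapse chain; and for \textcircled{5}, the relations $b_ic_j=c_jb_i$ for $j>i$ or $j<i-1$ follow from $(*)$, $(\S)$, $c_iv_j=v_jc_i$ for $|i-j|\ge2$, the reduced relation $c_1(v_2v_1b_1v_1v_2)=(v_2v_1b_1v_1v_2)c_1$, and \textcircled{2}, \textcircled{3}, exactly as in the $\sigma$-case.

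Since every defining relation of $\mathcal{FT}_n$ is thereby recovered from the proposed reduced presentation, and the proposed generators and relations are manifestly consequences of the full presentation (via $(*)$ and $(\S)$), the two presentations define the same group. The one point that deserves a word is the observation, recorded just before Theorem~\ref{twisted braid}, that $b_1(v_1v_2\sigma_1v_2v_1)=(v_1v_2\sigma_1v_2v_1)b_1$ is equivalent to $\sigma_1(v_2v_1b_1v_1v_2)=(v_2v_1b_1v_1v_2)\sigma_1$; the identical equivalence holds with $c_1$ in place of $\sigma_1$ (conjugate both sides by $v_1v_2$ and use $v_i^2=1$), which is why the reduced relation is stated in the $\sigma_1(v_2v_1b_1v_1v_2)=\cdots$ form and used freely in \textcircled{5}.

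The main obstacle is essentially nil: the whole content is that the abstract-group-theoretic manipulations in the proof of Theorem~\ref{twisted braid} never used any property of $\sigma_1$ beyond the commutation relation $\sigma_1 v_j = v_j\sigma_1$ for $j>2$ and the two "far" commutation/conjugation relations that were retained in the reduced presentation — and the flat generator $c_1$ satisfies exactly the analogues of all of these (plus the extra relation $c_1^2=1$, which only makes life easier and is not needed in the derivations). Hence the proof is a word-for-word translation. I would therefore simply write:

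\begin{proof}
The proof is identical to that of Theorem~\ref{twisted braid}, with the flat crossing generator $c_{1}$ taking the place of the braid generator $\sigma_{1}$, with Theorem~\ref{AFVB} used in place of Theorem~\ref{reducedpre} in step $\textcircled{1}$, and with the defining relation $(\S)$ used in place of $(\dag)$. In particular, the flat relations and mixed flat relations not involving $b_{i}$ follow from Theorem~\ref{AFVB}; the relation $c_{1}^{2}=1$ is retained; and the relations involving $b_{i}$ are derived exactly as in steps $\textcircled{2}$--$\textcircled{7}$, using the reduced relations $b_{1}^{2}=1$, $b_{1}v_{j}=v_{j}b_{1}$ for $j>1$, $b_{1}(v_{1}b_{1}v_{1})=(v_{1}b_{1}v_{1})b_{1}$, $c_{1}(v_{2}v_{1}b_{1}v_{1}v_{2})=(v_{2}v_{1}b_{1}v_{1}v_{2})c_{1}$ and $(v_{1}b_{1}v_{1})b_{1}c_{1}b_{1}(v_{1}b_{1}v_{1})=v_{1}c_{1}v_{1}$ together with $(*)$, $(\S)$, $(\ddag)$ and the virtual relations $v_{i}^{2}=1$.
\end{proof}
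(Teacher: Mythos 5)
Your proposal is correct and follows essentially the same route as the paper: the paper's proof likewise imports the flat and mixed flat relations not involving $b_{i}$ from Theorem~\ref{AFVB}, cites the proof of Theorem~\ref{twisted braid} for all relations involving only $b_{i}$ and $v_{i}$, and then repeats the computations for $b_{i}c_{j}=c_{j}b_{i}$ and $b_{i}b_{i+1}c_{i}b_{i+1}b_{i}=v_{i}c_{i}v_{i}$ verbatim with $c_{1}$ in place of $\sigma_{1}$ and $(\S)$ in place of $(\dag)$. No gaps.
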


\begin{proof}
{\rm \textcircled{1}} \ The flat relations and mixed flat relations:
\begin{equation*}\label{4}
\begin{split}
  &c_{i}^{2}=1, \ \ 1 \leq i \leq n,\\
  &c_{i}c_{i+1}c_{i}=c_{i+1}c_{i}c_{i+1}, \  \ 1 \leq i \leq n-2, \\
  &c_{i}c_{j}=c_{j}c_{i}, \ \ |i-j|\geq 2,  \\
   &c_{i}v_{j}=v_{j}c_{i}, \ \ |i-j|\geq 2, \\
  &v_ic_{i+1}v_i=v_{i+1}c_iv_{i+1}, \  \ 1 \leq i\leq n-2, \\
  \end{split}
\end{equation*}

follow from the Theorem \ref{AFVB}.

{\rm \textcircled{2}} \ The mixed relations and twisted relations:
\begin{equation*}\label{4}
\begin{split}
  &b_{i}^{2}=1, \ \ 1 \leq i \leq n, \\
  &b_{i}b_{j}=b_{j}b_{i}, \  \ i \neq j,  \\
   &b_{i}v_{j}=v_{j}b_{i}, \  \ j>i \ or \ j<i-1,  \\
  &v_ib_{i+1}=b_iv_{i}, \  \ 1 \leq i \leq n-1, \\
  \end{split}
\end{equation*}

follow from the proof of the Theorem \ref{twisted braid}.

{\rm \textcircled{3}} \ The mixed flat relations $b_{i}c_{j}=c_{j}b_{i}$ for $ j>i$ or $j<i-1$ follow from the defining relations $(*)$ and $(\S)$, the reduced relation $c_{1}(v_{2}v_{1}b_{1}v_{1}v_{2})=(v_{2}v_{1}b_{1}v_{1}v_{2})c_{1}$ and mixed flat relations $c_{i}v_{j}=v_{j}c_{i}$ for $ |i-j|\geq 2$ in $\textcircled{1}$ and $v_{i}b_{i+1}=b_{i}v_{i}$ for $i \geq 1$ and $b_{i}v_{j}=v_{j}b_{i}$ for $j>i$ or $j<i-1$ in $\textcircled{2}$.

If $j > i $, first we have:
\begin{equation*}\label{8}
\begin{split}
  b_{1}c_{j} &\overset{(\S)}=\underline{b_{1}}(v_{j-1} \cdots v_{2}v_{1})(v_{j}\cdots v_{3}v_{2})c_{1}(v_{2}v_{3}\cdots v_{j})(v_{1}v_{2} \cdots v_{j-1}) \\
                  &\overset{\textcircled{2}}=(v_{j-1} \cdots v_{2}\underline{b_{1}v_{1}})(v_{j}\cdots v_{3}v_{2})c_{1}(v_{2}v_{3}\cdots v_{j})(v_{1}v_{2} \cdots v_{j-1}) \\
                  &\overset{\textcircled{2}}=(v_{j-1} \cdots v_{2}v_{1}\underline{b_{2}})(v_{j}\cdots v_{3}v_{2})c_{1}(v_{2}v_{3}\cdots v_{j})(v_{1}v_{2} \cdots v_{j-1}) \\
                   &\overset{\textcircled{2}}=(v_{j-1} \cdots v_{2}v_{1})(v_{j}\cdots v_{3}\underline{b_{2}v_{2}})c_{1}(v_{2}v_{3}\cdots v_{j})(v_{1}v_{2} \cdots v_{j-1})\\
     &\overset{\textcircled{2}}=(v_{j-1} \cdots v_{2}v_{1})(v_{j}\cdots v_{3}
                   v_{2})\underline{b_{3}}c_{1}(v_{2}v_{3}\cdots v_{j})(v_{1}v_{2} \cdots v_{j-1}) \\
                   &\overset{(*)}=(v_{j-1} \cdots v_{2}v_{1})(v_{j}\cdots v_{3}
                   v_{2})\underline{(v_{2}v_{1}b_{1}v_{1}v_{2})c_{1}}(v_{2}v_{3}\cdots v_{j})(v_{1}v_{2} \cdots v_{j-1}) \\
                   &=(v_{j-1} \cdots v_{2}v_{1})(v_{j}\cdots v_{3}
                   v_{2})c_{1}\underline{(v_{2}v_{1}b_{1}v_{1}v_{2})}(v_{2}v_{3}\cdots v_{j})(v_{1}v_{2} \cdots v_{j-1}) \\
                    &\overset{(*)}=(v_{j-1} \cdots v_{2}v_{1})(v_{j}\cdots v_{3}
                   v_{2})c_{1}\underline{b_{3}(v_{2}}v_{3}\cdots v_{j})(v_{1}v_{2} \cdots v_{j-1}) \\
                   &\overset{\textcircled{2}}=(v_{j-1} \cdots v_{2}v_{1})(v_{j}\cdots v_{3}
                   v_{2})c_{1}(v_{2}\underline{b_{2}}v_{3}\cdots v_{j})(v_{1}v_{2} \cdots v_{j-1}) \\
                    &\overset{\textcircled{2}}=(v_{j-1} \cdots v_{2}v_{1})(v_{j}\cdots v_{3}
                   v_{2})c_{1}(v_{2}v_{3}\cdots v_{j})(\underline{b_{2}v_{1}}v_{2} \cdots v_{j-1}) \\
                   &\overset{\textcircled{2}}=(v_{j-1} \cdots v_{2}v_{1})(v_{j}\cdots v_{3}
                   v_{2})c_{1}(v_{2}v_{3}\cdots v_{j})(v_{1}\underline{b_{1}v_{2}} \cdots v_{j-1}) \\
                   &\overset{\textcircled{2}}=\underline{(v_{j-1} \cdots v_{2}v_{1})(v_{j}\cdots v_{3}
                   v_{2})c_{1}(v_{2}v_{3}\cdots v_{j})(v_{1}v_{2}v_{3} \cdots v_{j-1})}b_{1} \\
                   &\overset{(\S)}=c_{j}b_{1}. \\
  \end{split}
\end{equation*}

 Then
\begin{equation*} \label{9}
\begin{split}
  b_{i}c_{j} &\overset{(*)}=(v_{i-1} \cdots v_{1})b_{1}(v_{1} \cdots v_{i-1})\underline{c_{j}} \\
                  &\overset{\textcircled{1}}=c_{j}\underline{(v_{i-1} \cdots v_{1})b_{1}(v_{1} \cdots v_{i-1})} \\
                  &\overset{(*)}=c_{j}b_{i}. \\
  \end{split}
\end{equation*}

If $j < i-1$, first we have:
\begin{equation*} \label{10}
\begin{split}
c_{1} b_{i} &\overset{(*)}=\underline{c_{1}}(v_{i-1} \cdots v_{1})b_{1}(v_{1} \cdots v_{i-1}) \\
                 &\overset{\textcircled{1}}=(v_{i-1} \cdots \underline{c_{1}v_{2}v_{1})b_{1}(v_{1}v_{2}}\cdots v_{i-1}) \\
                 &=(v_{i-1} \cdots v_{2}v_{1})b_{1}(v_{1}v_{2}\underline{c_{1}}\cdots v_{i-1}) \\
                 &\overset{\textcircled{1}}=\underline{(v_{i-1} \cdots v_{2}v_{1})b_{1}(v_{1}v_{2}\cdots v_{i-1})}c_{1} \\
                 &\overset{(*)}=b_{i}c_{1}. \\
  \end{split}
\end{equation*}

Then
\begin{equation*} \label{11}
\begin{split}
  b_{i}c_{j} &\overset{(\S)}=\underline{b_{i}}(v_{j-1} \cdots v_{2}v_{1})(v_{j}\cdots v_{3}v_{2})c_{1}(v_{2}v_{3}\cdots v_{j})(v_{1}v_{2} \cdots v_{j-1}) \\
                  &\overset{\textcircled{2}}=\underline{(v_{j-1} \cdots v_{2}v_{1})(v_{j}\cdots v_{3}v_{2})c_{1}(v_{2}v_{3}\cdots v_{j})(v_{1}v_{2} \cdots v_{j-1})}b_{i} \\
                  &\overset{(\S)}=c_{j}b_{i}. \\
  \end{split}
\end{equation*}

{\rm \textcircled{4}} \ The mixed flat relations $b_{i}b_{i+1}c_{i}b_{i+1}b_{i}=v_{i}c_{i}v_i$ for $i>1$ follow from the above defining relations $(\ddag)$, $(*)$ and $(\S)$, the reduced relations $(v_{1}b_{1}v_{1})b_1c_{1}b_1(v_{1}b_{1}v_{1})=v_{1}c_{1}v_1$ and the mixed flat relations $b_{i}v_{i}=v_{i}b_{i+1}$ and $b_{i}v_{j}=v_{j}b_{i}$ for $ j>i$ or $j<i-1$ in $\textcircled{2}$.
\begin{equation*}\label{2}
\begin{split}
  b_ib_{i+1}c_{i}b_{i+1}b_i&\overset{(*)(\S)}=\underline{b_{i}(v_{i}} \cdots v_{1})b_{1}\underline{(v_{1} \cdots v_{i})(v_{i-1} \cdots v_{1})}(v_{i} \cdots v_{3}v_{2})c_{1}(v_{2}v_{3} \cdots v_{i}) \\& \underline{(v_{1} \cdots v_{i-1})(v_{i}\cdots v_{1})}b_{1}(v_{1} \cdots \underline{v_{i})b_{i}} \\
           &\overset{(\ddag)\textcircled{2}}=(v_{i}\underline{b_{i+1}} \cdots v_{1})b_{1}(v_{i} \cdots v_{2}v_{1}\underline{v_{2} \cdots v_{i})(v_{i} \cdots v_{3}v_{2})}c_{1} \\& \underline{(v_{2}v_{3} \cdots v_{i})(v_{i} \cdots v_{2}}v_{1}v_{2} \cdots v_{i})b_{1}(v_{1} \cdots \underline{b_{i+1}}v_{i}) \\
           &\overset{\textcircled{2}}=(v_{i}\cdots v_{1})b_{1}\underline{b_{i+1} (v_{i}} \cdots v_{2}v_{1})c_{1}(v_{1}v_{2} \cdots \underline{v_{i})b_{i+1}}b_{1}(v_{1} \cdots v_{i}) \\
           &\overset{\textcircled{2}}=(v_{i}\cdots v_{1})b_{1} (v_{i}\underline{b_{i}v_{i-1}} \cdots v_{2}v_{1})c_{1}(v_{1}v_{2} \cdots \underline{v_{i-1}b_{i}}v_{i})b_{1}(v_{1} \cdots v_{i}) \\
           &\overset{\textcircled{2}}= \cdots \\
           &\overset{\textcircled{2}}=(v_{i}\cdots v_{1})\underline{b_{1}}(v_{i}v_{i-1} \cdots v_{2}v_{1})b_{1}c_{1}b_{1}(v_{1}v_{2} \cdots v_{i-1}v_{i})\underline{b_{1}}(v_{1} \cdots v_{i}) \\
           &\overset{\textcircled{2}}=(v_{i}\cdots v_{1})(v_{i}v_{i-1} \cdots v_{2}b_{1}\underline{v_{1})b_{1}}c_{1}\underline{b_{1}(v_{1}}b_{1}v_{2} \cdots v_{i-1}v_{i})(v_{1} \cdots v_{i}) \\
            &\overset{\textcircled{2}}=(v_{i}\cdots v_{1})(v_{i}v_{i-1} \cdots v_{2}b_{1}b_{2}\underline{v_{1})c_{1}(v_{1}}b_{2}b_{1}v_{2} \cdots v_{i-1}v_{i})(v_{1} \cdots v_{i}) \\
             &=(v_{i}\cdots v_{1})(v_{i}v_{i-1} \cdots v_{2}b_{1}b_{2}\underline{(v_{1}b_{1}v_{1})}b_1c_{1}b_1\underline{(v_{1}b_{1}v_{1})}b_{2}b_{1}v_{2} \cdots v_{i-1}v_{i})(v_{1} \cdots v_{i}) \\
            &\overset{(*)}=(v_{i}\cdots v_{1})(v_{i}v_{i-1} \cdots v_{2}\underline{b_{1}b_{2}b_{2}b_1}c_{1}\underline{b_1 b_{2}b_{2}b_{1}}v_{2} \cdots v_{i-1}v_{i})(v_{1} \cdots v_{i}) \\
             &=(v_{i}\underline{v_{i-1}\cdots v_{1})(v_{i}v_{i-1} \cdots v_{2})c_{1}(v_{2}\cdots v_{i-1}v_{i})(v_{1} \cdots v_{i-1}}v_{i}) \\
             &\overset{(\S)}=v_{i}c_{i}v_{i}.\\
  \end{split}
\end{equation*}
\end{proof}

\section{Algebraic Markov Theorem For Twisted Braid}

In \cite{Kauffman2005}, Kauffman and Lambropoulou showed that the following Theorem \ref{Algebraic} is equivalent to the theorem of Kamada's Markov theorem for virtual braids in \cite{Kamada2007}. In this section, we give Algebraic Markov theorem for twisted braid that is the analogue of the Theorem \ref{Algebraic} for virtual braid.

\begin{thm}(Algebraic Markov theorem for virtual braid)(\cite{Kauffman2005}, Theorem 3)\label{Algebraic}
Two oriented virtual links are isotopic if and only if any two corresponding virtual braids differ by a finite sequence of braid relations in $\mathcal{VB}_{\infty}$ and the following moves or their inverses:
\begin{itemize}
  \item [(1)] Virtual and real conjugation: $v_{i} \alpha v_{i} \thicksim  \alpha  \thicksim \sigma_{i}^{-1} \alpha \sigma_{i}$.
  \item [(2)] Right virtual and real stabilization: $\alpha v_{n} \thicksim \alpha \thicksim \alpha \sigma_{n}^{\pm1}$.
  \item [(3)] Algebraic right under-threading: $\alpha \thicksim \alpha \sigma_{n}^{-1} v_{n-1} \sigma_{n}^{+1}$.
  \item [(4)] Algebraic left under-threading: $\alpha \thicksim \alpha v_{n} v_{n-1} \sigma_{n-1}^{+1} v_{n} \sigma_{n-1}^{-1} v_{n-1}v_{n}$.

      where $\alpha, v_{i}, \sigma_{i} \in \mathcal{VB}_{n}$ and $v_{n}, \sigma_{n} \in \mathcal{VB}_{n+1}$ . (see Figure \ref{ALLL})
\end{itemize}
\end{thm}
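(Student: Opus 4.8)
The plan is to derive Theorem~\ref{Algebraic} from the geometric Markov theorem for virtual braids (Theorem~\ref{Markov}) by translating the geometric $L_v$-moves of Figure~\ref{LV} into the algebra of the virtual braid group. The bridge between the two formulations is the observation that what Theorem~\ref{Markov} calls ``virtual braid isotopy'' is exactly the equivalence generated by the defining braid relations of $\mathcal{VB}_n$ --- and hence of $\mathcal{VB}_\infty$, since the $L_v$-moves change the number of strands --- together with the detour move, which is itself a consequence of those relations. So it suffices to check that, modulo braid relations in $\mathcal{VB}_\infty$, each of the four geometric moves of Theorem~\ref{Markov} is equivalent to the correspondingly numbered algebraic move of Theorem~\ref{Algebraic}, and conversely.

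For the forward direction I would go move by move. A real conjugation on a braid diagram, after pulling the wrapping strand straight, reads algebraically as $\alpha \mapsto \sigma_i^{-1}\alpha\sigma_i$, which is the real half of~(1). For the stabilising moves: a right virtual $L_v$-move introduces one extra strand on the right carrying a single virtual kink, and, after using the detour move (braid relations in $\mathcal{VB}_{n+1}$) to sweep all the incidental new crossings onto that new strand, it reduces to $\alpha v_n \thicksim \alpha$; a right real $L_v$-move reduces in the same way to $\alpha\sigma_n^{\pm1}\thicksim\alpha$; together these give~(2). The two under-threaded $L_v$-moves are the delicate ones: applying the same ``all new crossings virtual'' procedure to the right under-threaded move produces the word $\alpha\sigma_n^{-1}v_{n-1}\sigma_n$ of~(3), and to the left under-threaded move the word $\alpha v_nv_{n-1}\sigma_{n-1}v_n\sigma_{n-1}^{-1}v_{n-1}v_n$ of~(4); here one must carefully track how the braiding-chart conventions place each virtual crossing relative to the new rightmost strand.

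The converse direction is comparatively routine: every algebraic move is patently realised by a geometric $L_v$-move together with isotopy (a conjugation is a conjugation, a stabilisation is a small real or virtual kink, and the words in~(3)--(4) are literally the braid words of the right and left under-threaded $L_v$-moves), so no equivalence beyond those permitted by Theorem~\ref{Markov} is introduced. It then remains to account for the items that appear in Theorem~\ref{Algebraic} but are absent from Theorem~\ref{Markov} --- virtual conjugation $v_i\alpha v_i\thicksim\alpha$ and the left virtual and left real stabilisations --- by deriving them from~(1)--(4) and the braid relations, exactly as in \cite{Kauffman2005}: for instance $v_i\alpha v_i$ is carried to $\alpha$ by first performing a virtual stabilisation, then a conjugation, then a de-stabilisation. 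Finally, equivalence with Kamada's formulation \cite{Kamada2007} is recorded as already established there.

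I expect the main obstacle to be the bookkeeping in the forward direction for the two under-threaded moves. One has to be scrupulous about which pair of adjacent strands each generator acts on once the new strand has been inserted, and about the order in which the detour move is invoked, in order to land on precisely the words $\sigma_n^{-1}v_{n-1}\sigma_n$ and $v_nv_{n-1}\sigma_{n-1}v_n\sigma_{n-1}^{-1}v_{n-1}v_n$ rather than on some braid-equivalent but syntactically different expression; the verification that these are indeed the ``correct'' normal forms is the technical heart of the argument. Everything else --- the presentation of $\mathcal{VB}_\infty$, the detour move as a relation, the converse realisations, and the redundancy of the left and virtual-conjugation moves --- is standard.
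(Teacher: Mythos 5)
The paper does not prove this statement at all: Theorem \ref{Algebraic} is quoted verbatim from Kauffman and Lambropoulou (\cite{Kauffman2005}, Theorem 3) and is used in Section 6 only as an input to its twisted analogue, Theorem \ref{TB}. So there is no in-paper argument to compare yours against. Your plan --- identify ``virtual braid isotopy'' with the defining relations of $\mathcal{VB}_{\infty}$, translate each geometric $L_v$-move of Theorem \ref{Markov} into a braid word via the detour move, check the converse realisations, and then derive the redundant moves (virtual conjugation, left stabilisations) from (1)--(4) --- is essentially the strategy of the original reference, and nothing in the outline is wrong. The substantive caveat is the one you flag yourself: the entire content of items (3) and (4) is the claim that the right and left under-threaded $L_v$-moves, once the incidental crossings have been swept into virtual position on the new rightmost strand, are represented by precisely the words $\sigma_n^{-1}v_{n-1}\sigma_n$ and $v_nv_{n-1}\sigma_{n-1}v_n\sigma_{n-1}^{-1}v_{n-1}v_n$, and your proposal asserts this rather than verifies it. A secondary loose end: you promise to derive virtual conjugation $v_i\alpha v_i\thicksim\alpha$ for all $i$ from the listed moves, but the stabilise--conjugate--destabilise trick you sketch only handles conjugation by the outermost generator directly; for general $i$ one needs the braid relations to transport the kink, which deserves a sentence. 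As it stands the proposal is a correct road map with its technical heart deferred; to count as a proof it would need the two diagram-to-word translations written out.
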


For twisted braid, we have:

\begin{thm}(Algebraic Markov theorem for twisted braid).\label{TB}
Two oriented twisted links are isotopic if and only if any two corresponding twisted braids differ by a finite sequence of braid relations in $\mathcal{TB}_{\infty}$ and the following moves or their inverses:
\begin{itemize}
  \item [(1)] Virtual, real and twisted conjugation: $v_{i} \alpha v_{i} \thicksim  \alpha  \thicksim \sigma_{i}^{-1} \alpha \sigma_{i} \thicksim b_{i}\alpha b_{i}$.
  \item [(2)] Right virtual and real stabilization: $\alpha v_{n} \thicksim \alpha \thicksim \alpha \sigma_{n}^{\pm1}$.
  \item [(3)] Algebraic right under-threading: $\alpha \thicksim \alpha \sigma_{n}^{-1} v_{n-1} \sigma_{n}^{+1}$.
  \item [(4)] Algebraic left under-threading: $\alpha \thicksim \alpha v_{n} v_{n-1} \sigma_{n-1}^{+1} v_{n} \sigma_{n-1}^{-1} v_{n-1}v_{n}$.

      where $\alpha, v_{i}, \sigma_{i} \in \mathcal{TB}_{n}$ and $v_{n}, \sigma_{n} \in \mathcal{TB}_{n+1}$ (see Figure \ref{ALLL} and Figure \ref{TC}).
\end{itemize}
\end{thm}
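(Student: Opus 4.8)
The plan is to follow the strategy of Kauffman and Lambropoulou, who derived the algebraic Markov theorem for virtual braids (Theorem \ref{Algebraic}) by translating the geometric $L$-move formulation (Theorem \ref{Markov}) into the language of the inductive limit group $\mathcal{VB}_{\infty}$. Since we have already established the geometric Markov theorem for twisted braids (Theorem \ref{UU}), it suffices to prove that on $\bigcup_{n}\mathcal{TB}_{n}$ the equivalence relation generated by twisted braid isotopy together with real conjugation, twisted conjugation, right virtual $L_{v}$-moves, right real $L_{v}$-moves and right/left under-threaded $L_{v}$-moves coincides with the equivalence relation generated by the algebraic moves (1)--(4) of Theorem \ref{TB}. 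By definition, twisted braid isotopy is exactly the set of braid (defining) relations of $\mathcal{TB}_{\infty}$, so the entire content is a dictionary between the remaining geometric moves of Theorem \ref{UU} and the algebraic moves listed above.

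First I would dispose of the moves that already occur in the virtual setting. A right virtual $L_{v}$-move creates (or removes) a new rightmost strand carrying a single virtual crossing; after a conjugation and a finite sequence of braid relations of $\mathcal{TB}_{\infty}$ bringing that strand to standard position, it becomes right virtual stabilization $\alpha v_{n}\thicksim\alpha$. In the same way a right real $L_{v}$-move becomes right real stabilization $\alpha\sigma_{n}^{\pm1}\thicksim\alpha$, and the right and left under-threaded $L_{v}$-moves become the algebraic under-threading relations (3) and (4) (Figure \ref{ALLL}); for these moves the translation is word-for-word the one in \cite{Kauffman2005}, since no bars are involved. Real conjugation is $\sigma_{i}^{-1}\alpha\sigma_{i}\thicksim\alpha$, and virtual conjugation $v_{i}\alpha v_{i}\thicksim\alpha$ — which, as in the virtual case, is a consequence of the other moves and braid isotopy — is recorded as the remaining part of clause (1). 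In each case the converse is immediate: every algebraic move is just a geometric move carried out on the last strand.

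The genuinely new ingredient is twisted conjugation. The geometric twisted conjugation move of Figure \ref{TWC} slides a bar once around a closed component, and at the braid level this is precisely $b_{i}\alpha b_{i}\thicksim\alpha$ (Figure \ref{TC}), which I would add to clause (1). What then has to be checked is that bars sitting in inconvenient positions during the other translations create no difficulty. A bar lying on a strand that participates in an $L_{v}$-move or a stabilization can always be pushed off that region using the twisted Reidemeister move $T1$ and the detour move with bars (Figure \ref{DE}); on the algebraic side this is exactly the mixed relations $b_{i}v_{i}=v_{i}b_{i+1}$, $b_{i}v_{j}=v_{j}b_{i}$ and $b_{i}\sigma_{j}=\sigma_{j}b_{i}$ for distant indices, all of which are braid relations of $\mathcal{TB}_{\infty}$ (cf. the defining relations used in Theorem \ref{twisted braid}). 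In particular a bar created on the new strand of a stabilization move can be slid across the virtual crossings joining that strand to the rest of the braid and then around the closure, so that ``stabilization with a bar'' reduces to ordinary stabilization followed by one twisted conjugation; hence no new stabilization-type move involving $b_{i}$ is required.

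The main obstacle I anticipate is precisely this bookkeeping: verifying, case by case, that the normalization of a general geometric $L_{v}$-move — one that may run over or under an arbitrary portion of the braid and may carry bars — into its standard right/left algebraic form uses only braid relations of $\mathcal{TB}_{\infty}$ together with virtual, real and twisted conjugation, i.e. that adjoining the bar generators $b_{i}$ forces no relation beyond twisted conjugation. This is the twisted analogue of the normalization arguments of \cite{Kauffman2005}, and it should go through because, after a single detour move with bars, every local picture containing a bar is identical to one already handled in the virtual case. With this dictionary established in both directions, the geometric moves of Theorem \ref{UU} and the algebraic moves (1)--(4) generate the same equivalence relation, and Theorem \ref{TB} follows from Theorem \ref{UU}. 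The flat twisted case is handled in exactly the same way, replacing $\sigma_{i}$ by the flat generator $c_{i}$ and invoking the geometric flat twisted Markov theorem.
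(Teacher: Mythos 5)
Your proposal is correct and follows essentially the same route as the paper, which simply derives Theorem \ref{TB} from the virtual algebraic Markov theorem (Theorem \ref{Algebraic}) together with the geometric twisted Markov theorem (Theorem \ref{UU}), the only new algebraic ingredient being the twisted conjugation $b_{i}\alpha b_{i}\thicksim\alpha$. In fact your write-up supplies considerably more of the geometric-to-algebraic dictionary (in particular the observation that bars are normalized away using $T1$, the detour move, and the mixed relations of $\mathcal{TB}_{\infty}$) than the paper's one-sentence proof does.
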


\begin{figure}[!htbp]
  \centering
   \subfigure[virtual and real conjugation]{
  \includegraphics[width=0.8\textwidth]{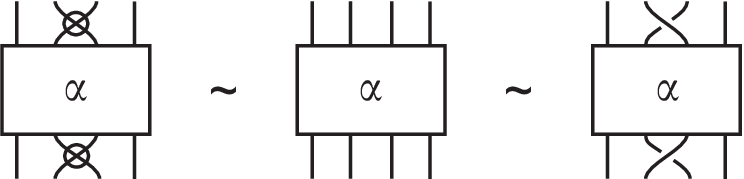}} \
   \subfigure[Right virtual and real stabilizations]{
  \includegraphics[width=0.8\textwidth]{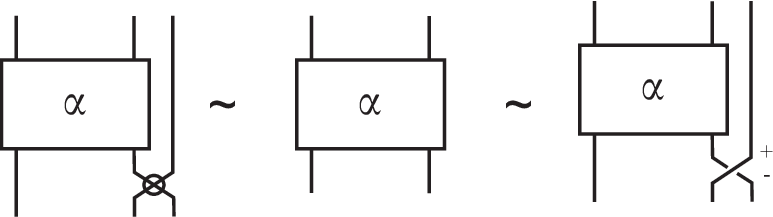}} \
   \subfigure[Algebraic right and left under-threading]{
  \includegraphics[width=0.8\textwidth]{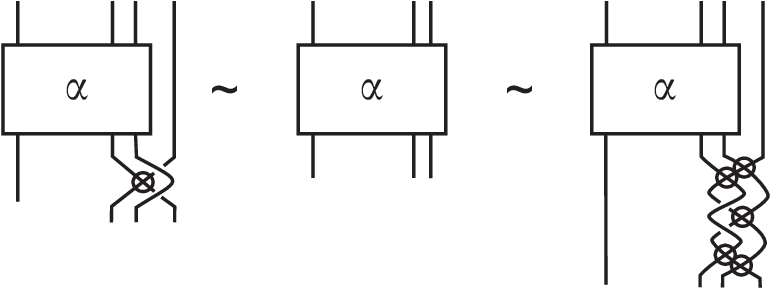}} \
  \caption{The move (1)-(4) of Theorem \ref{Algebraic}}\label{ALLL}\end{figure}

\begin{proof}
The algebraic moves of Theorem \ref{TB} follow immediately from Theorem \ref{Algebraic} and the proof of Theorem \ref{UU}.
\end{proof}

\begin{figure}[!htbp]
  \centering
  \includegraphics[width=0.5\textwidth]{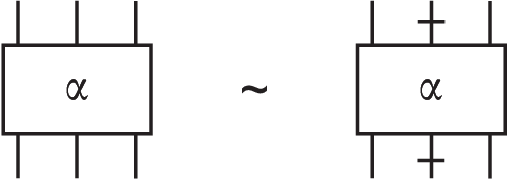}\\
  \caption{Twisted conjugation}\label{TC}
\end{figure}

For flat virtual braid, Kauffman and Lambropoulou \cite{Kauffman2005} had the following result.

\begin{thm}(Algebraic Markov theorem for flat virtuals)(\cite{Kauffman2005}, Theorem 5) \label{AVB}
Two oriented flat virtual links are isotopic if and only if any two corresponding flat virtual braids differ by a finite sequence of braid relations in $\mathcal{FV}_{\infty}$ and the following moves or their inverses:
\begin{itemize}
  \item [(1)] Virtual and flat conjugation: $v_{i} \alpha v_{i} \thicksim  \alpha  \thicksim c_{i} \alpha c_{i}$.
  \item [(2)] Right virtual and flat stabilization: $\alpha v_{n} \thicksim \alpha \thicksim \alpha c_{n}$.
  \item [(3)] Algebraic right flat threading: $\alpha \thicksim \alpha c_{n} v_{n-1} c_{n}$.
  \item [(4)] Algebraic left flat threading: $\alpha \thicksim \alpha v_{n} v_{n-1} c_{n-1} v_{n} c_{n-1} v_{n-1}v_{n}$.

      where $\alpha, v_{i}, c_{i} \in \mathcal{FV}_{n}$ and $v_{n}, c_{n} \in \mathcal{FV}_{n+1}$ (see Figure \ref{ALLL}, substituting the real crossings by the flat ones ).
\end{itemize}
\end{thm}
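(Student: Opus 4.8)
The plan is to obtain Theorem~\ref{AVB} from the geometric Markov theorem for flat virtual braids (Theorem~\ref{FUU}) by converting each geometric $L_v$-move into a word-level identity in $\mathcal{FV}_\infty$, exactly as the algebraic virtual Markov theorem (Theorem~\ref{Algebraic}) is extracted from its geometric counterpart (Theorem~\ref{Markov}). Theorem~\ref{FUU} asserts that two flat virtual braids have isotopic closures if and only if they are connected by flat virtual braid isotopy together with flat conjugation, right virtual $L_v$-moves, right flat $L_v$-moves, and right and left flat threaded $L_v$-moves. Since flat virtual braid isotopy is exactly the set of defining relations of $\mathcal{FV}_\infty$ on the generators $c_i,v_i$, the theorem reduces to re-expressing these four geometric families as the algebraic moves (1)--(4), and then checking the converse that each algebraic move is realized by an isotopy of closures.

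The conjugation and stabilization moves are the most direct. A flat conjugation replaces a braid word by $w\alpha w^{-1}$ with $w$ a single virtual or flat crossing abutting the closure arc; using $v_i^2=c_i^2=1$ this is precisely $v_i\alpha v_i\thicksim\alpha$ and $c_i\alpha c_i\thicksim\alpha$, yielding move~(1). A right virtual or flat $L_v$-move introduces one new top strand carrying a single crossing with the outgoing strand, which the braiding algorithm records as right multiplication by $v_n$ (respectively $c_n$) in $\mathcal{FV}_{n+1}$; deleting the new strand is the stabilization $\alpha v_n\thicksim\alpha\thicksim\alpha c_n$, which is move~(2).

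For the threading moves I would exploit the ``flattening'' principle as a guide: the algebraic right and left \emph{under}-threading words of Theorem~\ref{Algebraic} pass to the right and left \emph{flat} threading words of Theorem~\ref{AVB} under the substitution $\sigma_i^{\pm1}\mapsto c_i$, since a flat crossing is its own inverse. Concretely, the word $\alpha\sigma_n^{-1}v_{n-1}\sigma_n^{+1}$ becomes $\alpha c_n v_{n-1}c_n$, and $\alpha v_nv_{n-1}\sigma_{n-1}^{+1}v_n\sigma_{n-1}^{-1}v_{n-1}v_n$ becomes $\alpha v_nv_{n-1}c_{n-1}v_nc_{n-1}v_{n-1}v_n$, giving moves~(3) and~(4). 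I would then justify these words directly, rather than merely by analogy, by drawing the flat threaded $L_v$-tangle on the new strand~$n$, applying the detour move and the mixed flat relation $v_ic_{i+1}v_i=v_{i+1}c_iv_{i+1}$, and reading off the resulting generator string.

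The main obstacle I anticipate is showing that only \emph{right} stabilizations, and only virtual and flat (not ``left'') conjugations, are needed: one must prove, using braid relations in $\mathcal{FV}_\infty$ together with the reduced presentation of Theorem~\ref{AFVB}, that each left $L_v$-move is a consequence of a right $L_v$-move conjugated by a suitable virtual word. This symmetrisation step --- absorbing left moves into right moves plus conjugation --- is the technical heart of the argument, and it is exactly where the explicit sequences of braid relations must be produced; indeed the asymmetry between the short word~(3) and the longer word~(4) is precisely what this reduction is designed to control.
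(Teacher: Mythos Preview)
This theorem is not proved in the paper at all: it is quoted verbatim from Kauffman--Lambropoulou \cite{Kauffman2005} as a known result, with no accompanying argument, and is used only as background for the flat twisted analogue (Theorem~\ref{ATB}). So there is no ``paper's own proof'' to compare against.

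That said, your outline is the standard route and matches what Kauffman--Lambropoulou themselves do in \cite{Kauffman2005}: start from the geometric $L$-move Markov theorem (here Theorem~\ref{FUU}), translate each $L_v$-move into a word in the generators, and observe that the flat case is obtained from the virtual case by the substitution $\sigma_i^{\pm1}\mapsto c_i$. Your identification of the symmetrisation step (absorbing left moves into right moves plus conjugation) as the only nontrivial point is accurate, and in the flat category this step goes through unchanged from the virtual case because the relevant manipulations use only the virtual and mixed relations, which are common to $\mathcal{VB}_n$ and $\mathcal{FV}_n$. Nothing in your proposal is wrong; it is simply more than what the present paper undertakes.
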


For flat twisted braid, we have:

\begin{thm}(Algebraic Markov theorem for flat twisted braid). \label{ATB}
Two oriented flat twisted links are isotopic if and only if any two corresponding flat twisted braids differ by a finite sequence of braid relations in $\mathcal{FT}_{\infty}$ and the following moves or their inverses:
\begin{itemize}
  \item [(1)] Virtual, flat and twisted conjugation: $v_{i} \alpha v_{i} \thicksim  \alpha  \thicksim c_{i} \alpha c_{i} \thicksim b_{i}\alpha b_{i}$.
  \item [(2)] Right virtual and flat stabilization: $\alpha v_{n} \thicksim \alpha \thicksim \alpha c_{n}$.
  \item [(3)] Algebraic right flat threading: $\alpha \thicksim \alpha c_{n} v_{n-1} c_{n}$.
  \item [(4)] Algebraic left flat threading: $\alpha \thicksim \alpha v_{n} v_{n-1} c_{n-1} v_{n} c_{n-1} v_{n-1}v_{n}$.

      where $\alpha, v_{i}, c_{i} \in \mathcal{FT}_{n}$ and $v_{n}, c_{n} \in \mathcal{FT}_{n+1}$ (see Figure \ref{ALLL} and Figure \ref{TC}, substituting the real crossings by the flat ones ).
\end{itemize}
\end{thm}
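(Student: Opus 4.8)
The plan is to run the same reduction that converts the \emph{geometric} Markov theorem for flat twisted braids (the flat analogue of Theorem \ref{UU}, proved in Section 4) into its algebraic counterpart, in exact parallel with the way Kauffman and Lambropoulou pass from their geometric flat virtual Markov theorem to Theorem \ref{AVB}, and with the way Theorem \ref{TB} is deduced from Theorem \ref{Algebraic} together with the proof of Theorem \ref{UU}. First I would invoke that geometric theorem: two flat twisted braids have isotopic closures if and only if they differ by flat twisted braid isotopy together with flat conjugation, twisted conjugation, right virtual $L_v$-moves, right flat $L_v$-moves, and right and left flat threaded $L_v$-moves. The remaining task is then purely algebraic: rewrite each of these moves as a relation up to $\thicksim$ in $\mathcal{FT}_\infty$.

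Second, I would dispose of everything that is already present in the flat virtual category. Flat twisted braid isotopy is by definition generated by the defining relations of $\mathcal{FT}_n$, hence by braid relations in $\mathcal{FT}_\infty$, so the first clause of Theorem \ref{ATB} is covered. The geometric moves not involving bars — flat and virtual conjugation, the virtual and flat $L_v$-moves, and the flat threaded $L_v$-moves — are translated into the algebraic moves (1)--(4) of Theorem \ref{ATB} by the very computations Kauffman and Lambropoulou use to deduce Theorem \ref{AVB} from their geometric theorem: a right $L_v$-move is absorbed by a right virtual or flat stabilization after a conjugation, and the threaded $L_v$-moves become the algebraic right and left flat threading identities $\alpha \thicksim \alpha c_{n} v_{n-1} c_{n}$ and $\alpha \thicksim \alpha v_{n} v_{n-1} c_{n-1} v_{n} c_{n-1} v_{n-1}v_{n}$. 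These arguments are insensitive to bars on strands disjoint from the local picture, because the detour move with bars (Figure \ref{DE}) lets any bar be slid through the virtual crossings created by the $L_v$-moves.

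Third, the genuinely new ingredient is twisted conjugation. Geometrically it is the assertion that pushing a bar around the closure arc (Figure \ref{TC}) is an isotopy of closures; algebraically this is exactly $b_{i}\alpha b_{i} \thicksim \alpha$, already in the desired form, so it simply joins clause (1). One must also check that the bar moves $T1$ and $T3$ occurring with an up-arc less in the braiding algorithm — precisely the cases treated in the proof of the flat twisted geometric Markov theorem — contribute only twisted conjugations together with the moves (2)--(4); this is where twisted conjugation does the work, just as in the proof of Theorem \ref{UU}. The converse direction, that each algebraic move induces an isotopy of the closures, is immediate: take closures and read off the corresponding extended flat twisted Reidemeister move.

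The main obstacle I anticipate is bookkeeping rather than conceptual: one must verify that when a bar sits on the up-arc being braided, sliding it off the endpoints and reintroducing it on the new braid strand costs nothing beyond a twisted conjugation, so that no extra algebraic relation is forced. Once this is checked — and via the detour-with-bars move it reduces to the bar-free analysis already carried out for $\mathcal{FV}_\infty$ — the geometric and algebraic formulations coincide, yielding Theorem \ref{ATB}.
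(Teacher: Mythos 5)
Your proposal is correct and takes essentially the same route as the paper: the paper's own proof is a one-line reduction to Theorem \ref{AVB} together with the proof of Theorem \ref{UU}, and your argument simply spells out that same reduction (geometric flat twisted Markov theorem, translation of the bar-free moves via Kauffman--Lambropoulou, and twisted conjugation $b_i\alpha b_i\thicksim\alpha$ as the only new algebraic move) in more detail.
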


\begin{proof}
The algebraic moves of Theorem \ref{ATB} follow immediately from Theorem \ref{AVB} and the proof of Theorem \ref{UU}.
\end{proof}

\section{Acknowledgements}
The authors would like to express their gratitude to the anonymous referees for their kind suggestions and useful comments on the original manuscript, which resulted in this final version.
Deng is also supported by Doctor's Funds of Xiangtan University (No. 09KZ$|$KZ08069) and NSFC (No. 12001464).
The project is supported partially by Hu Xiang Gao Ceng Ci Ren Cai Ju Jiao Gong Cheng-Chuang Xin Ren Cai (No. 2019RS1057).

\newpage
\section*{Appendix}

\setcounter{figure}{0}
\renewcommand\thefigure{A.\arabic{figure}}

Figure \ref{braiding1} lists the braiding chart of Figure 7 in \cite{Kauffman2005}, And Figure \ref{braiding2}, \ref{braiding3}, \ref{braiding4}, \ref{braiding5}, \ref{braiding6}, \ref{braiding7}, \ref{braiding8}, \ref{braiding9}, \ref{braiding10} list the braiding chart for crossings which contains at least one up-arc in a twisted link diagram.

\begin{figure}[!htbp]
  \centering
  \includegraphics[width=1\textwidth]{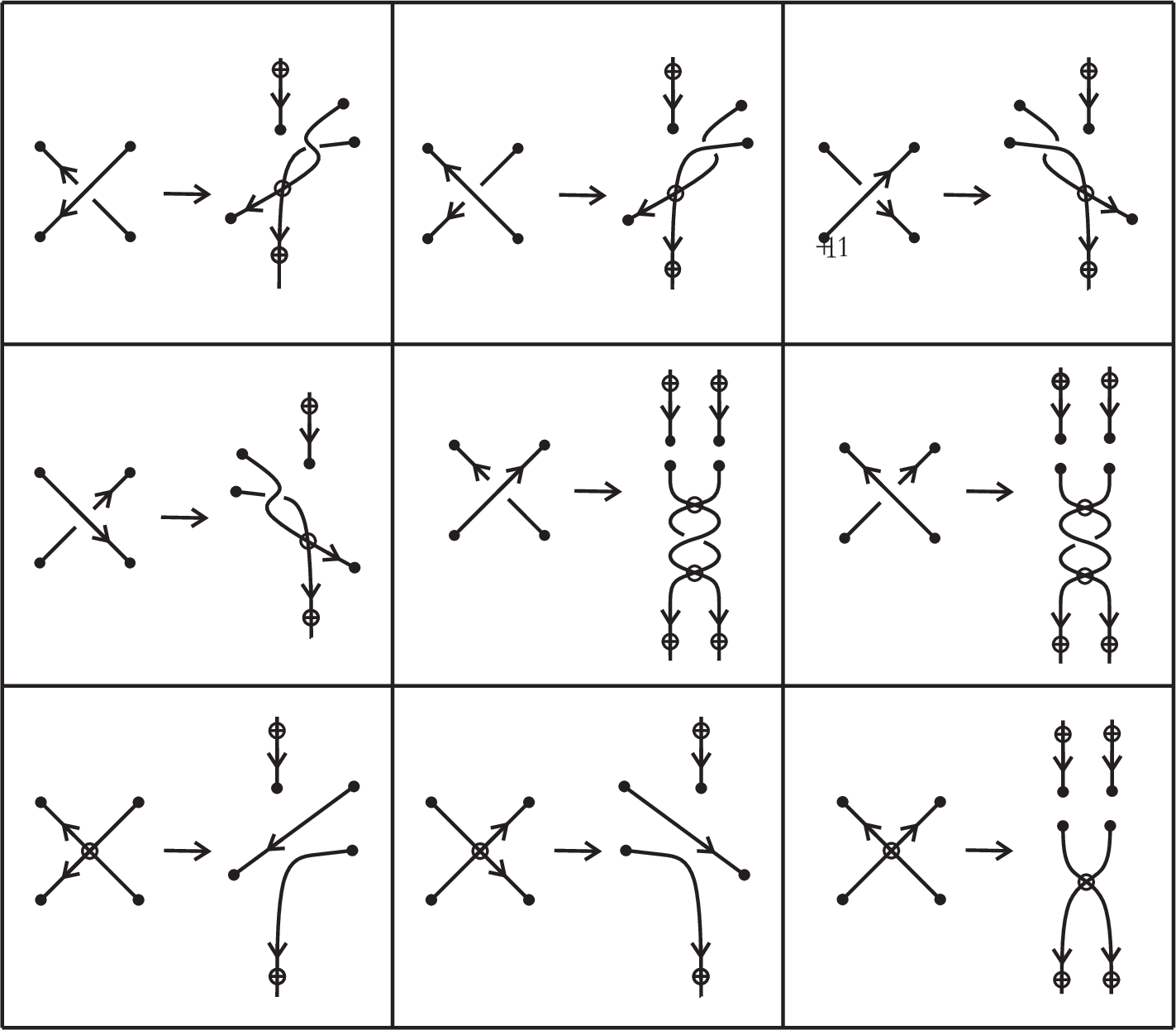}\\
  \caption{}\label{braiding1}
\end{figure}

\begin{figure}[!htbp]
  \centering
  \includegraphics[width=1\textwidth]{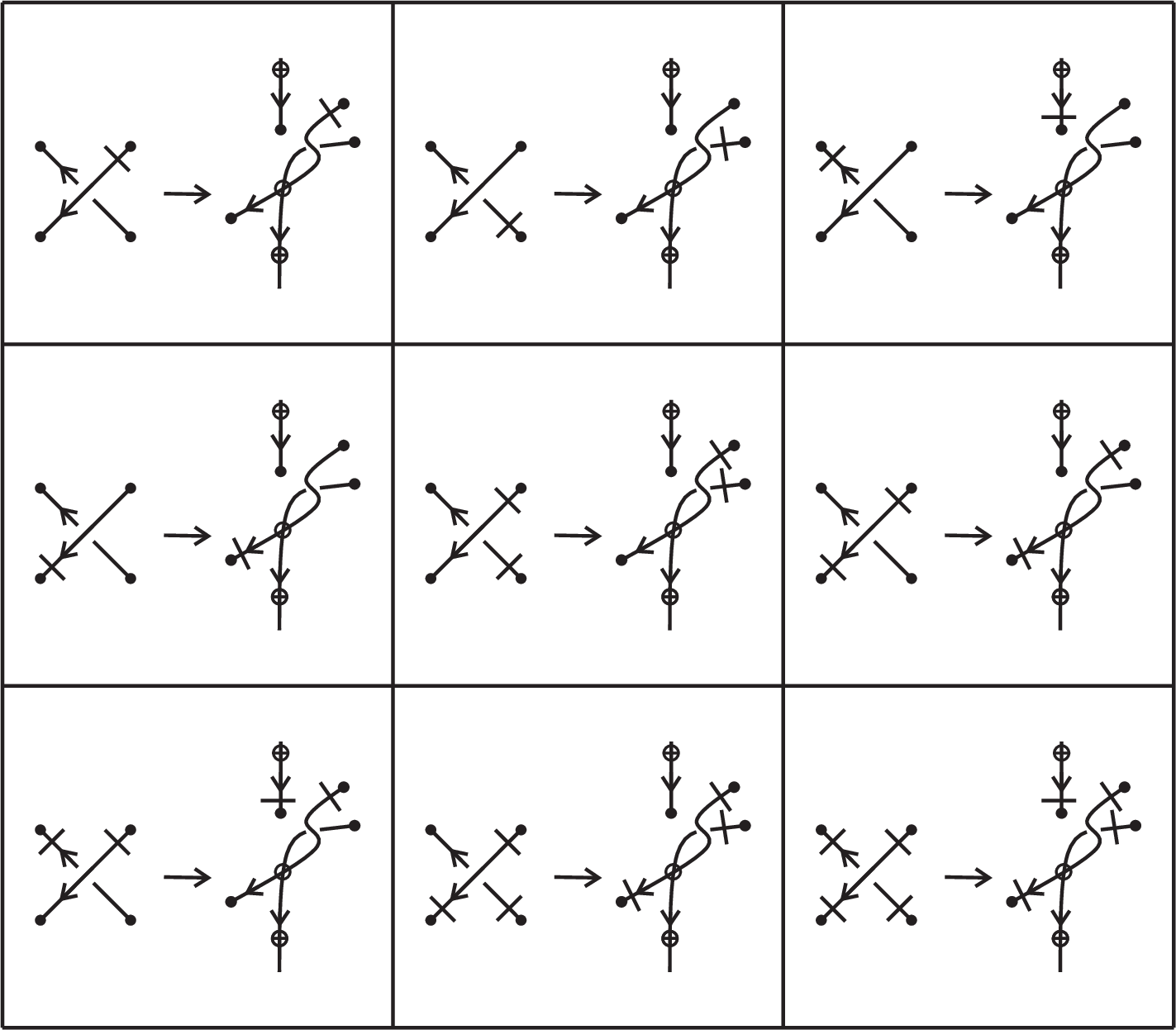}\\
  \caption{}\label{braiding2}
\end{figure}

\begin{figure}[!htbp]
  \centering
  \includegraphics[width=1\textwidth]{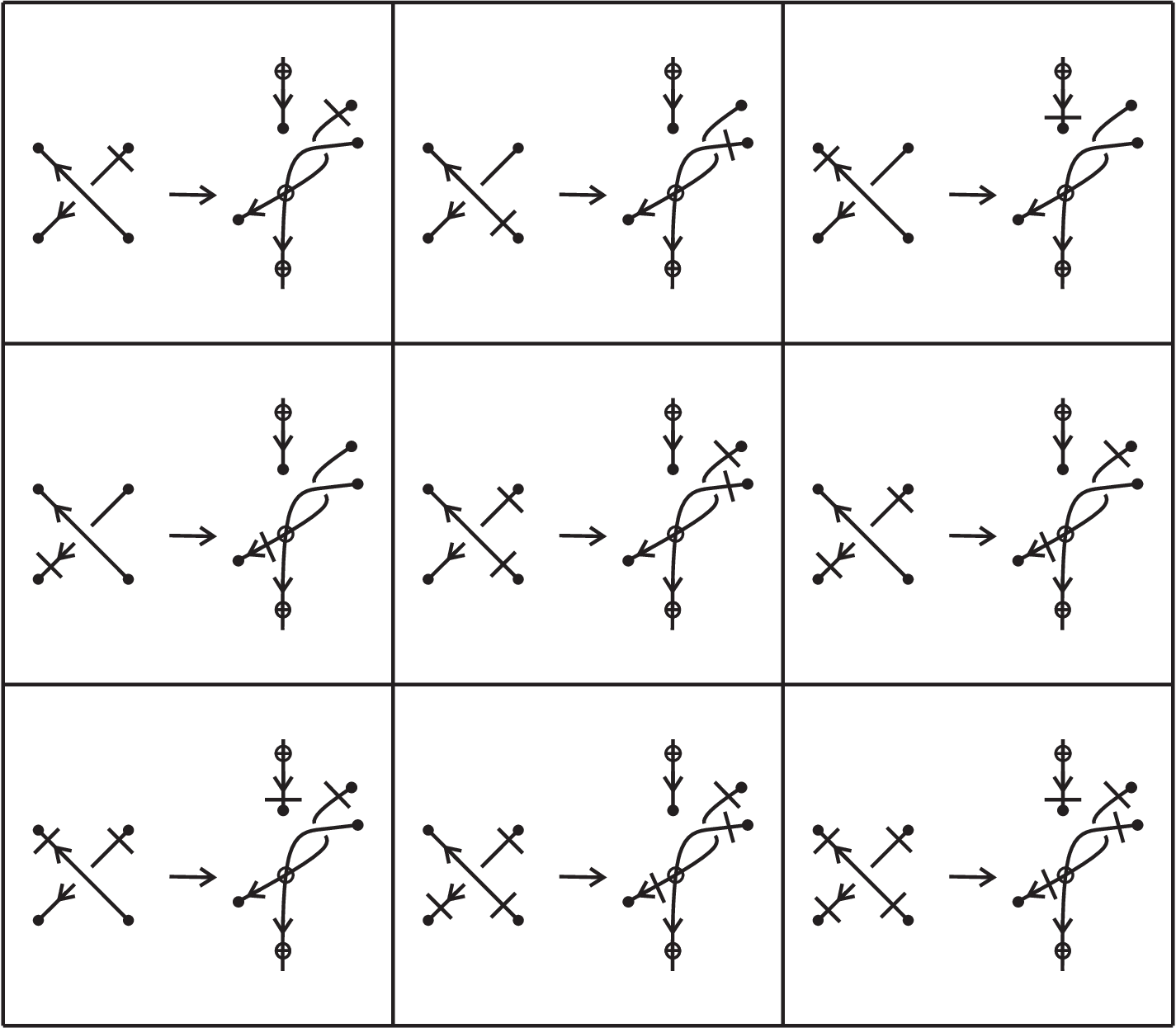}\\
  \caption{}\label{braiding3}
\end{figure}

\begin{figure}[!htbp]
  \centering
  \includegraphics[width=1\textwidth]{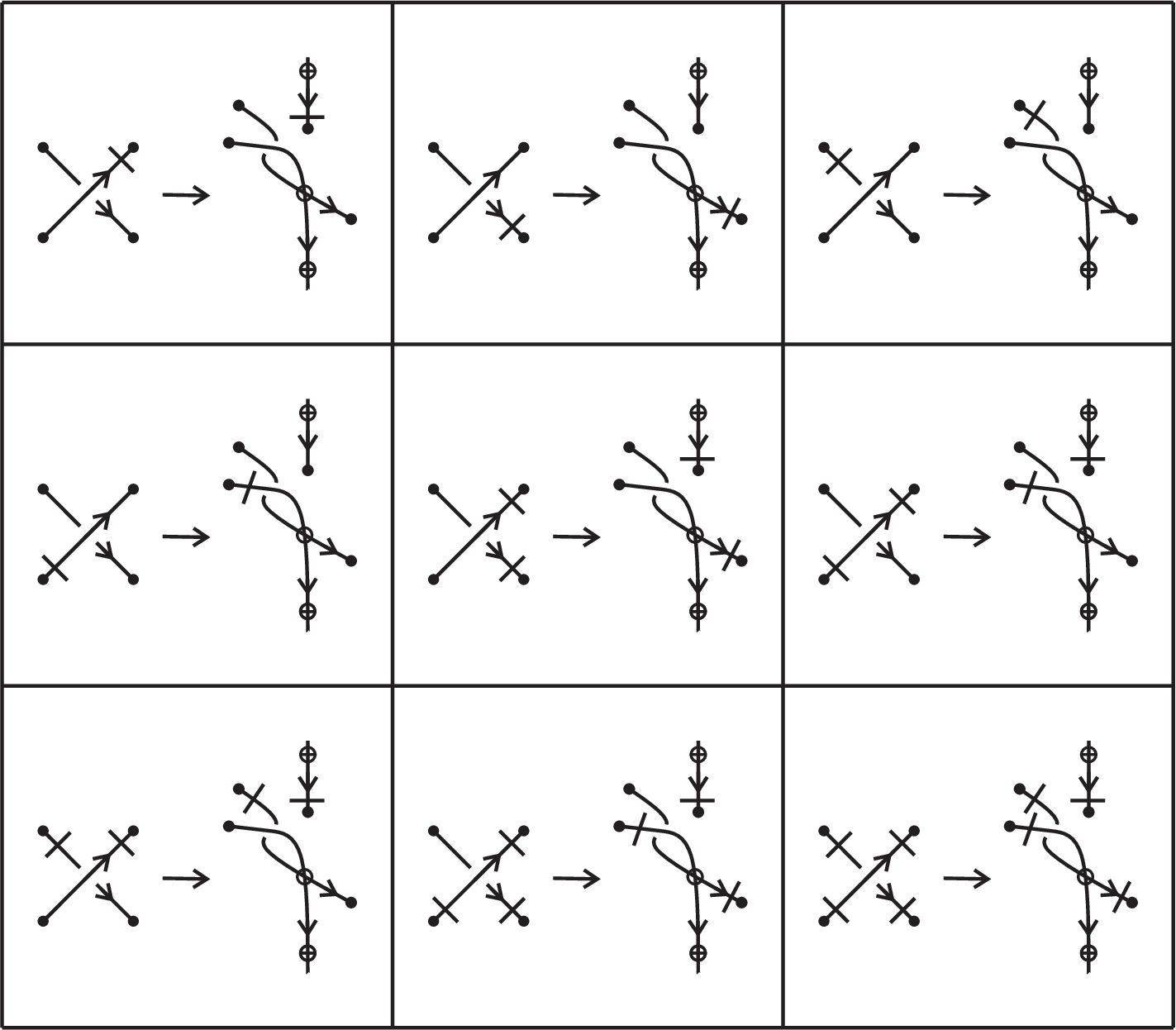}\\
  \caption{}\label{braiding4}
\end{figure}

\begin{figure}[!htbp]
  \centering
  \includegraphics[width=1\textwidth]{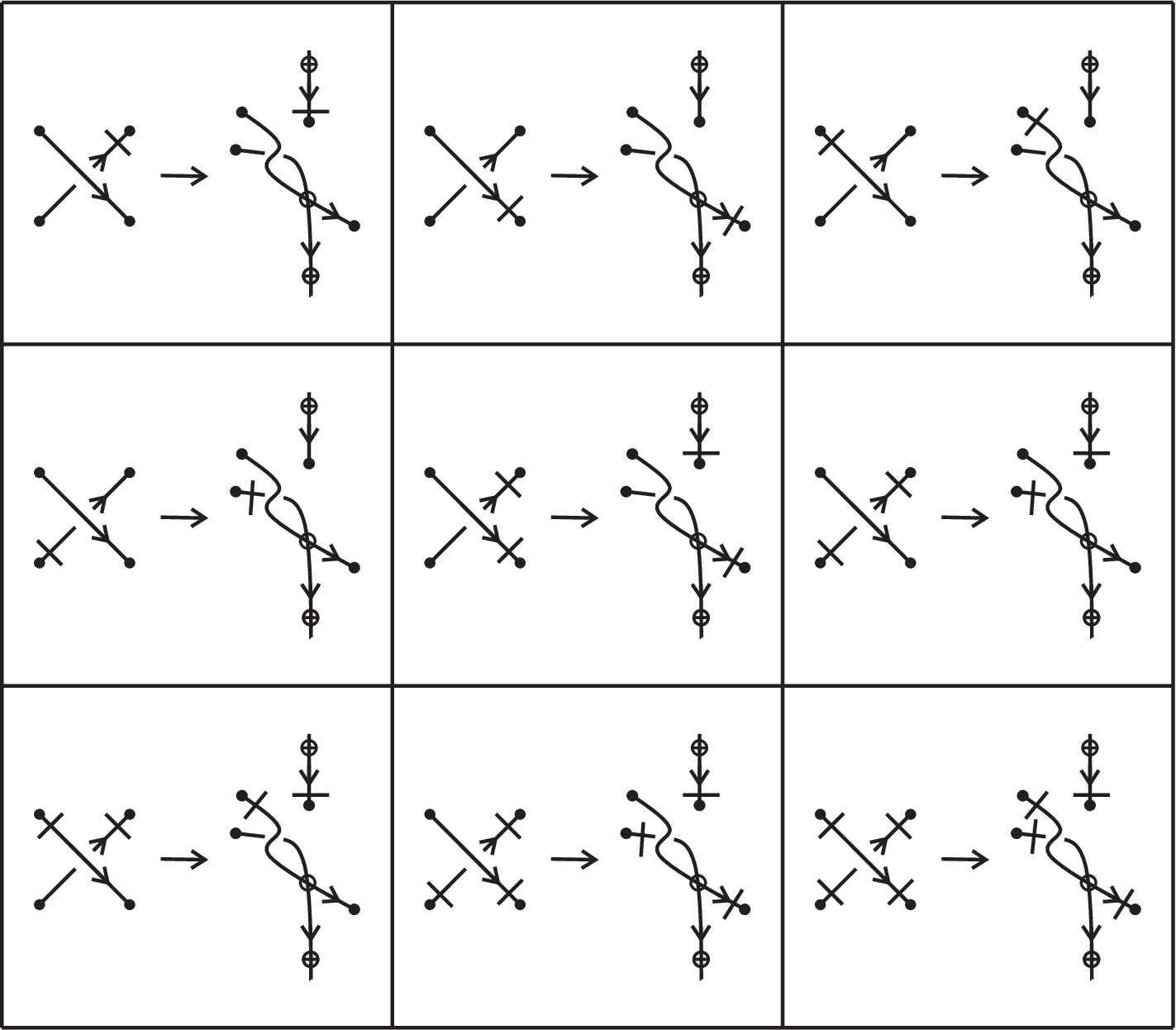}\\
  \caption{}\label{braiding5}
\end{figure}

\begin{figure}[!htbp]
  \centering
  \includegraphics[width=1\textwidth]{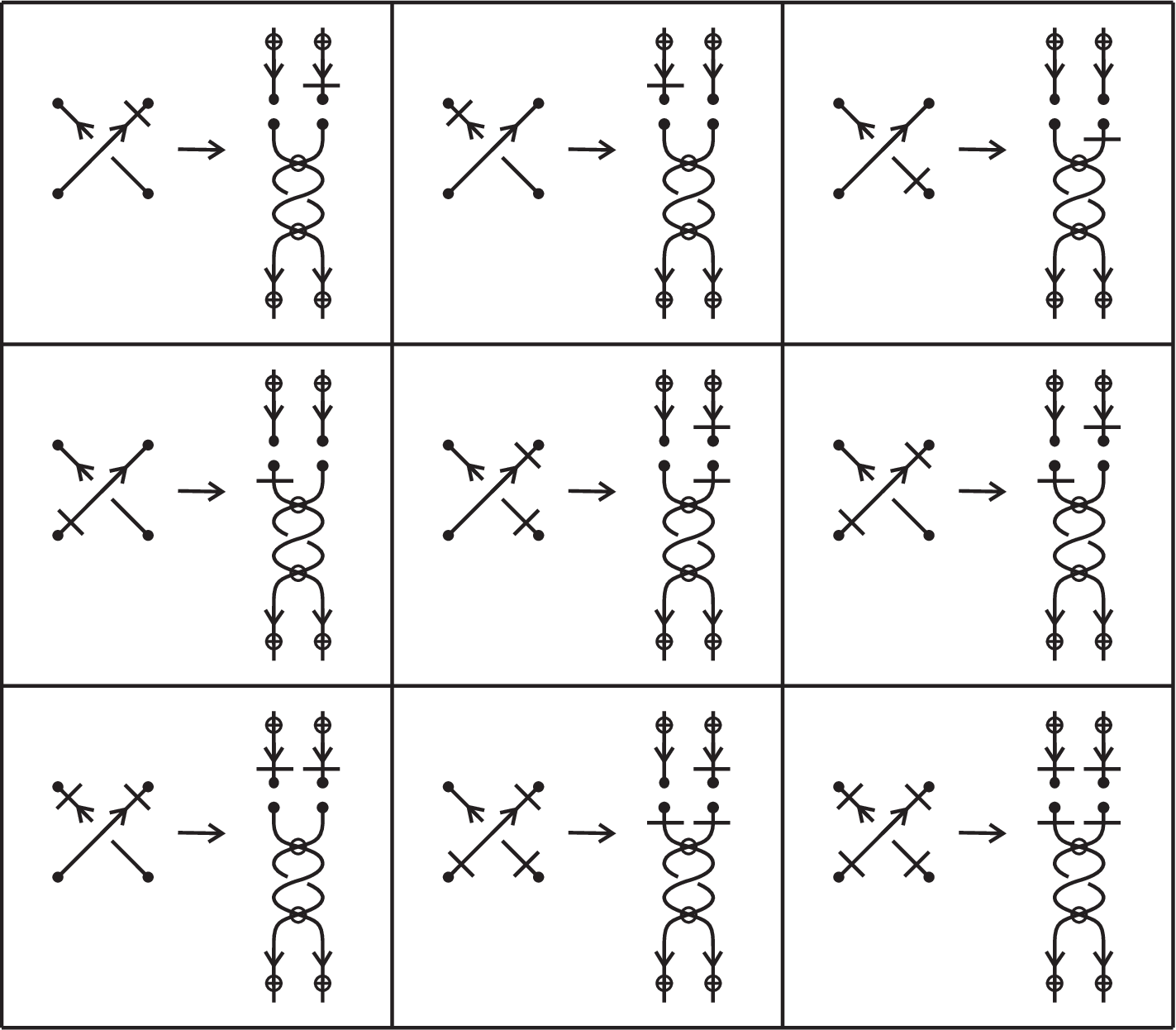}\\
  \caption{}\label{braiding6}
\end{figure}

\begin{figure}[!htbp]
  \centering
  \includegraphics[width=1\textwidth]{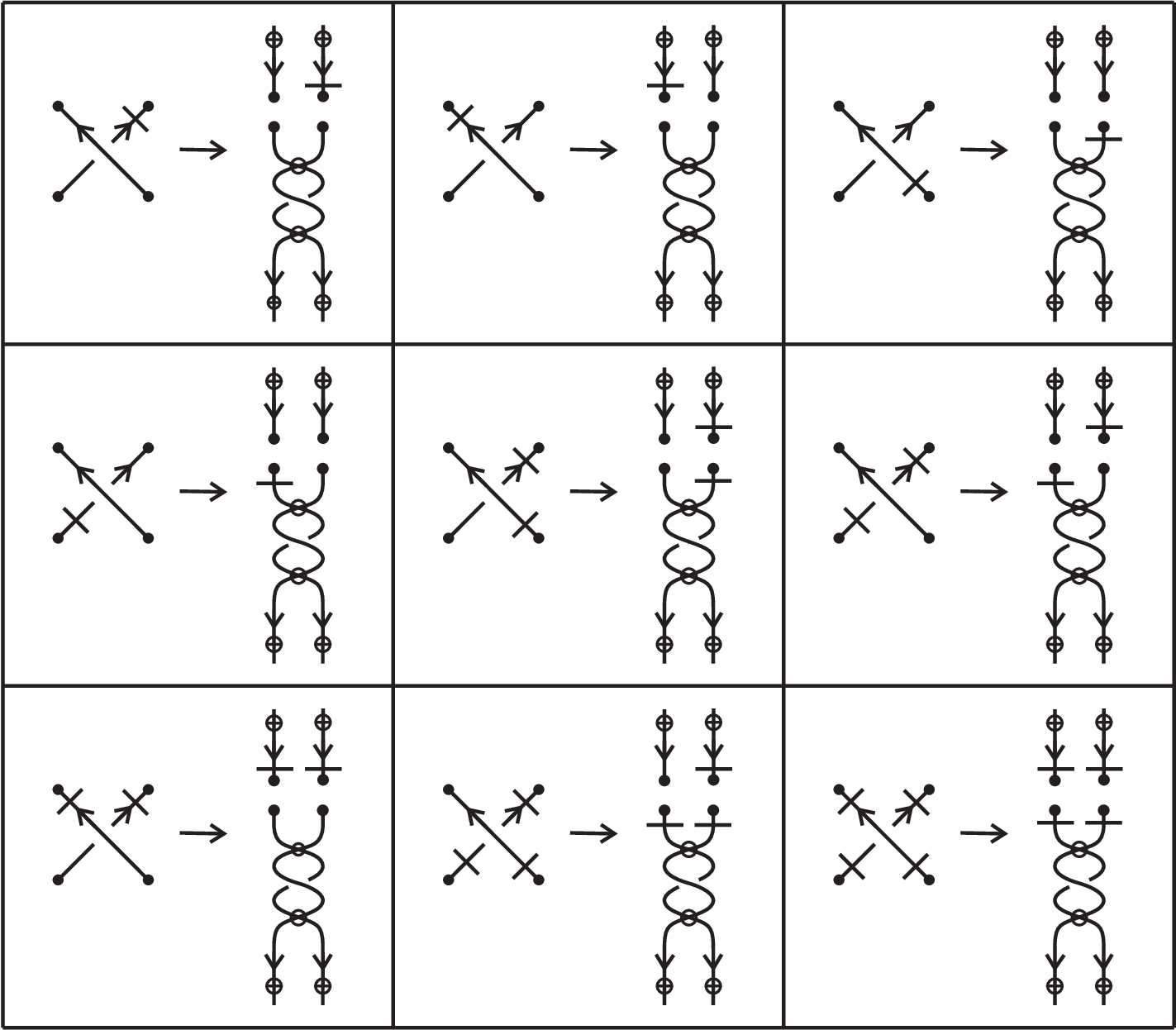}\\
  \caption{}\label{braiding7}
\end{figure}

\begin{figure}[!htbp]
  \centering
  \includegraphics[width=1\textwidth]{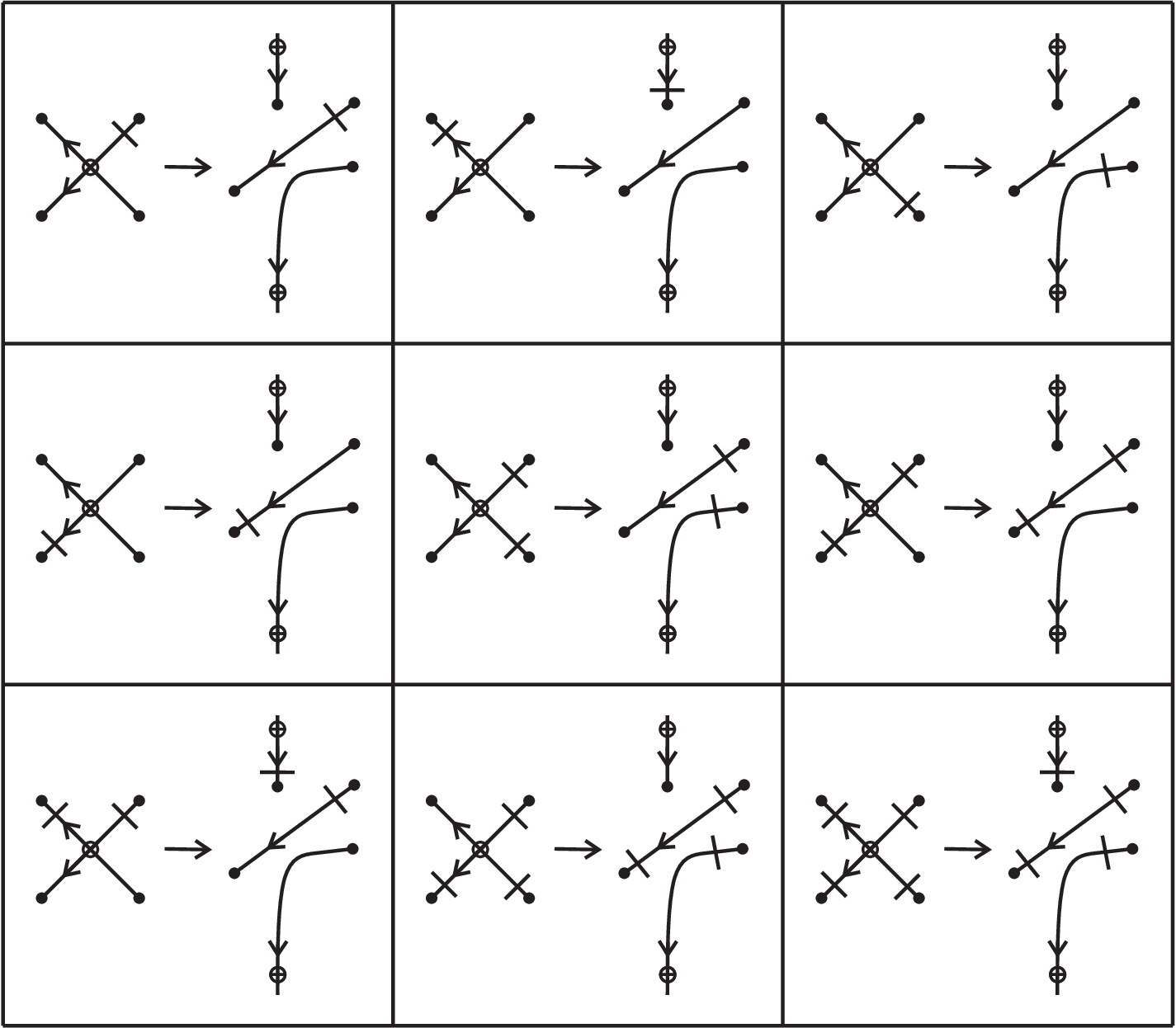}\\
  \caption{}\label{braiding8}
\end{figure}

\begin{figure}[!htbp]
  \centering
  \includegraphics[width=1\textwidth]{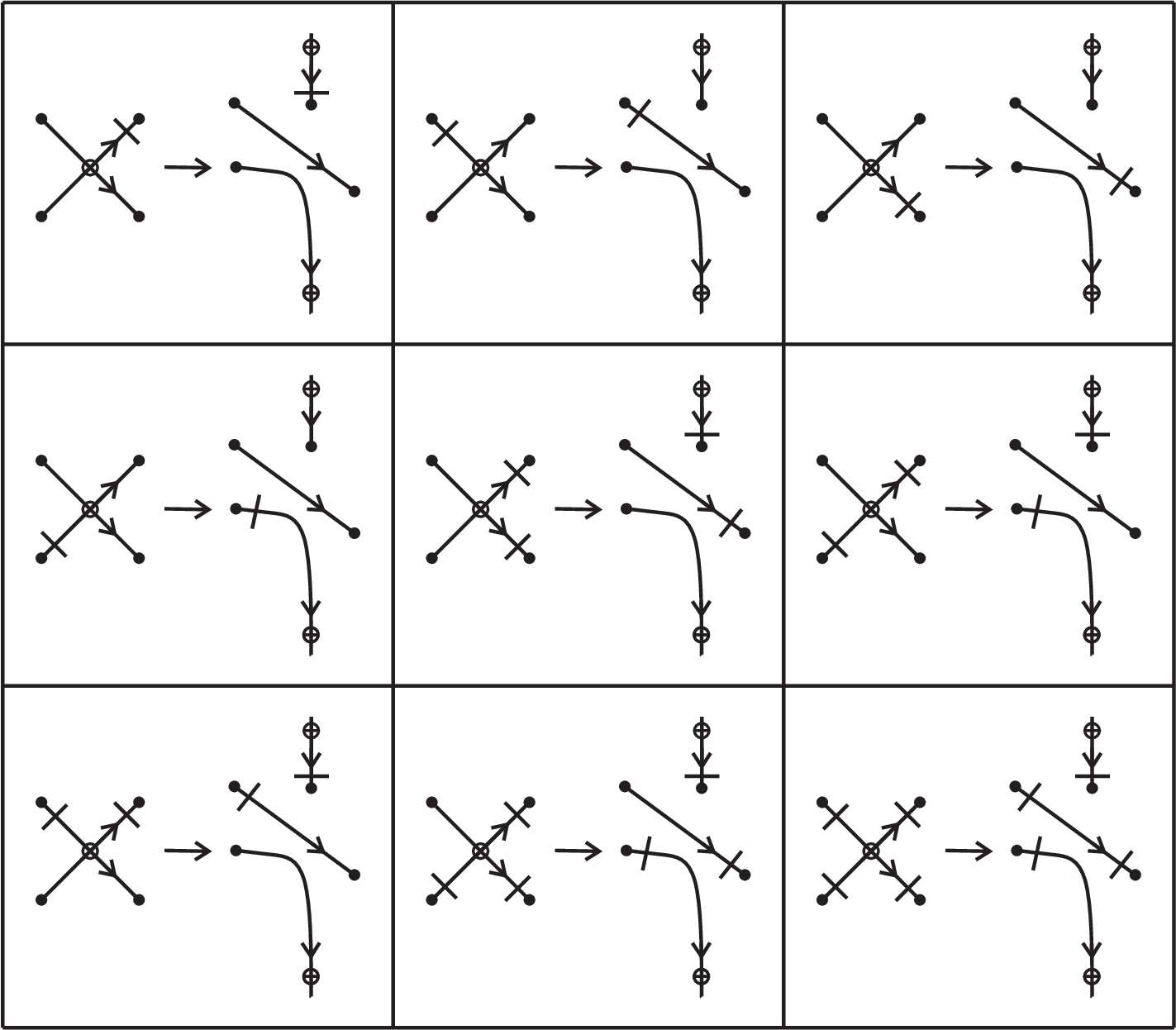}\\
  \caption{}\label{braiding9}
\end{figure}

\begin{figure}[!htbp]
  \centering
  \includegraphics[width=1\textwidth]{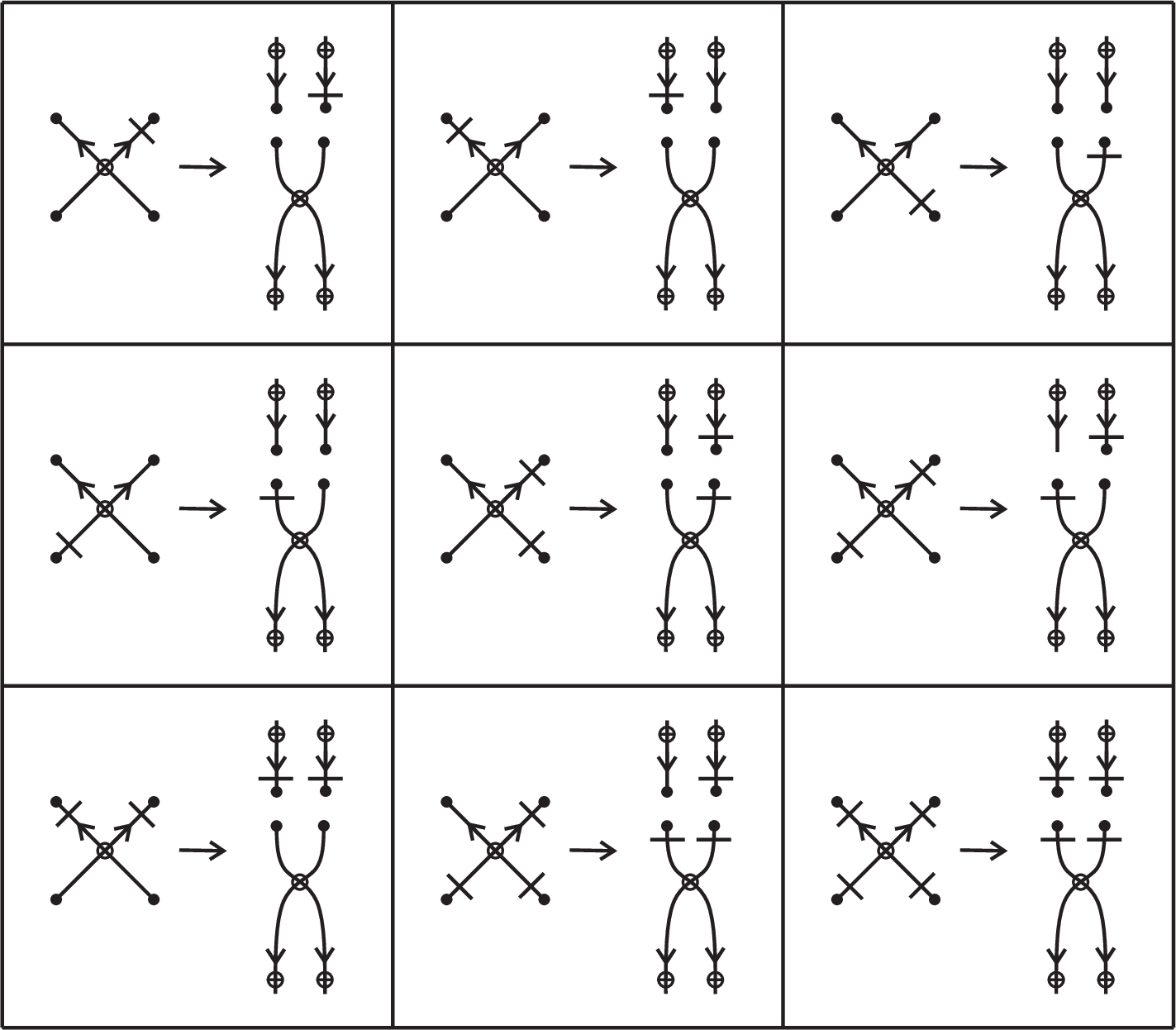}\\
  \caption{}\label{braiding10}
\end{figure}


\begin{thebibliography}{10}


\bibitem{EA} E.Artin, Theory of braids, {\em Ann. of Math.}, 48 (1947), 101-126.

\bibitem{JWA} J.W.Alexander, A lemma on systems of knotted curves, {\em Proc. Nat. Acad. Sci.}, 9 (1923) 93-95.

\bibitem{DBE} D.Bennequin, Entrlacements et $\acute{e}$quations de Pfaffe, {\em Asterisque}, 107-108 (1974) 87-161.

\bibitem{JSB1} J.S.Birman, Braids, Links and Mapping Class Groups, {\em Ann. of Math.}, 82 (1974).

\bibitem{JSB2} J.S.Birman, W.W.Menasco, On Markov's Theorem, {\em J.Knot Theory Ramifications}, 11 (2002) 295-310.

\bibitem{M.O.B} M.O.Bourgoin, Twisted link theroy, {\em Algebr. Geom, Topol.}, 8 (2008) 1249-1279.

\bibitem{RRC} R.Fenn, R.Rimanyi, C.Rourke, The Braid permutation group, {\em Topology}, 36 (1997) 123-135.


\bibitem{M.M.O}M.Goussarov, M.Polyak, O.Viro, Finite-type invariants of classical and virtual knots, {\em Topology}, 39 (2000).

\bibitem{LHK} L.H.Kauffman, Virtual knot theory, {\em European J. Comb.}, 20 (1999) 663-691.

\bibitem{Kauffman2000} L.H.Kauffman, A survey of virtual knot theroy, {\em World Scientific Publishing}, (2000) 143-202.

\bibitem{Kauffman2004} L.H.Kauffman, S.Lambropoulou, Virtual braids, {\em Fund. Math.}, 184 (2004) 159-186.

\bibitem{Kauffman2005} L.H.Kauffman, S.Lambropoulou, Virtual braids and the L-moves, {\em J. Knot Theory Ramifications}, 15 (2006) 773-811.

\bibitem{Kamada2020} N.Kamada, S.Kamada, Virtual links which are equivalent as twisted links, {\em Proc. Amer. Math. Soc.}, 148 (2020) 2273-2285.

\bibitem{Kamada2007} S.Kamada, Braid presentation of virtual knots and welded knots, {\em Osaka J. Math.}, 44 (2007) 441-458.

\bibitem{T.K} T.Kanenobu, Forbidden moves unknot a virtual knot, {\em J. Knot Theory Ramifications}, 10 (2001) 89-96.

\bibitem{SL1} S.Lambropoulou, C.P.Rourke, Markov theorem in 3-manifolds, {\em Topology Appl.}, 78 (1997), 95-122.

\bibitem{SL2} S.Lambropoulou, Short proofs of Alexander's and Markov's theorems, {\em Warwick preprint}, 1990.

\bibitem{SL3} S.Lambropoulou, A study of braids in 3-manifolds, {\em Ph.D. thesis, Warwick}, 1993.

\bibitem{AAM} A.A.Markov, $\ddot{U}$ber die freie $\ddot{A}$quivalenz geschlossener Z$\ddot{o}$pfe, {\em Recusil Math. Moscou.}, 1 (1936), 73-78.

\bibitem{HRM} H.R.Morton, Threading knot diagrams, {\em Math. Pro. Cambridge Philos. Soc.}, 99 (1986), 247-260.

\bibitem{S.N} S.Nelson, Unknotting virtual knots with Gauss diagram forbidden moves {\em J. Knot Theory Ramifications}, 6 (2001) 931-935.


\bibitem{P.T} P.Traczyk, A new proof of Markov's braid theorem, {\em Knot Thery}, (1998) 409-419.

\bibitem{P.V} P.Vogel, Representation of links by braid: A new algorithm, {\em Comment. Math. Helvetici}, 65(1990) 104-113.


\bibitem{NW} N.Weinberg, Sur I'equivalence libre des tresses ferm$\acute{e}$es, {\em Comptes Rendus (Doklady) de l'Academie des Science de l'URSS}, 23 (1939), 215-216.

\bibitem{XD} S.Xue, Q.Deng, Unknotting twisted knots with Gauss diagram forbidden moves, {\em https://arxiv.org/abs/2112.15263}

\bibitem{SY} S.Yamada, The minimal number of Seifert circles equals the braid index of a link, {\em Invent. Math.}, 89 (1987), 347-356.

\end{thebibliography}
\end{document}